\documentclass[a4paper,11pt]{article}

\pdfoutput=1

\usepackage[utf8]{inputenc}
\usepackage[T1]{fontenc}
\usepackage[UKenglish]{babel}
\usepackage{amsmath,amsthm,amssymb,amsfonts}
\usepackage[a4paper,margin=1in]{geometry}
\usepackage{graphicx}
\usepackage{url}
\usepackage[hidelinks]{hyperref}
\usepackage{mathtools, nccmath}
\usepackage{tikz}
\usepackage{tikzit}
\usepackage{stmaryrd}
\usepackage{physics}
\usepackage{float}

\tikzstyle{doubled}=[line width=1.5pt] % set the line width for all doubled (quantum) maps/wires

\tikzstyle{dot}=[inner sep=0mm,minimum width=2mm,minimum height=2mm,draw,shape=circle]  
\tikzstyle{ddot}=[inner sep=0mm, doubled, minimum width=2.5mm,minimum height=2.5mm,draw,shape=circle]

\tikzstyle{pdot}=[inner sep=0mm, doubled, minimum width=2.5mm,minimum height=2.5mm,shape=circle]
\tikzstyle{phase dimensions}=[minimum size=6mm,font=\footnotesize,inner sep=0.2mm,outer sep=-2mm]

\tikzstyle{phase dot}=[pdot,phase dimensions]
\tikzstyle{wphase dot}=[dot, phase dimensions]

\tikzstyle{hadamard}=[fill=white,draw,inner sep=0.6mm,font=\footnotesize,minimum height=6mm,minimum width=8mm]

\tikzstyle{anti} = [fill=white,draw,inner sep=0.6mm,font=\footnotesize,minimum height=3mm,minimum width=3mm]

\tikzstyle{triang}=[regular polygon,regular polygon sides=3,draw,scale=0.75,inner sep=-0.75pt,minimum width=9mm,fill=white,regular polygon rotate=180]
\tikzstyle{triang_lesssep}=[regular polygon,regular polygon sides=3,draw,scale=0.75,inner sep=-4pt,minimum width=9mm,fill=white,regular polygon rotate=180, text depth=4mm]
\tikzstyle{triangdag}=[regular polygon,regular polygon sides=3,draw,scale=0.75,inner sep=-0.5pt,minimum width=9mm,fill=white]

\makeatletter
\newcommand{\boxshape}[3]{%
\pgfdeclareshape{#1}{
\inheritsavedanchors[from=rectangle] % this is nearly a rectangle
\inheritanchorborder[from=rectangle]
\inheritanchor[from=rectangle]{center}
\inheritanchor[from=rectangle]{north}
\inheritanchor[from=rectangle]{south}
\inheritanchor[from=rectangle]{west}
\inheritanchor[from=rectangle]{east}
\backgroundpath{% this is new
\southwest \pgf@xa=\pgf@x \pgf@ya=\pgf@y
\northeast \pgf@xb=\pgf@x \pgf@yb=\pgf@y

\@tempdima=#2
\@tempdimb=#3

\pgfpathmoveto{\pgfpoint{\pgf@xa - 5pt + \@tempdima}{\pgf@ya}}
\pgfpathlineto{\pgfpoint{\pgf@xa - 5pt - \@tempdima}{\pgf@yb}}
\pgfpathlineto{\pgfpoint{\pgf@xb + 5pt + \@tempdimb}{\pgf@yb}}
\pgfpathlineto{\pgfpoint{\pgf@xb + 5pt - \@tempdimb}{\pgf@ya}}
\pgfpathlineto{\pgfpoint{\pgf@xa - 5pt + \@tempdima}{\pgf@ya}}
\pgfpathclose
}
}}

\boxshape{NEbox}{0pt}{5pt}
\boxshape{SEbox}{0pt}{-5pt}
\boxshape{NWbox}{5pt}{0pt}
\boxshape{SWbox}{-5pt}{0pt}
\boxshape{EBox}{-3pt}{3pt}
\boxshape{WBox}{3pt}{-3pt}
\makeatother

\tikzstyle{map}=[draw,shape=NEbox,inner sep=2pt,minimum height=6mm,fill=white]
\tikzstyle{mapdag}=[draw,shape=SEbox,inner sep=2pt,minimum height=6mm,fill=white]
\tikzstyle{maptrans}=[draw,shape=SWbox,inner sep=2pt,minimum height=6mm,fill=white]
\tikzstyle{mapconj}=[draw,shape=NWbox,inner sep=2pt,minimum height=6mm,fill=white]

\tikzstyle{dmap}=[draw,doubled,shape=NEbox,inner sep=2pt,minimum height=6mm,fill=white]
\tikzstyle{dmapdag}=[draw,doubled,shape=SEbox,inner sep=2pt,minimum height=6mm,fill=white]
\tikzstyle{dmaptrans}=[draw,doubled,shape=SWbox,inner sep=2pt,minimum height=6mm,fill=white]
\tikzstyle{dmapconj}=[draw,doubled,shape=NWbox,inner sep=2pt,minimum height=6mm,fill=white]

\makeatletter
\pgfdeclareshape{cornerpoint}{
\inheritsavedanchors[from=rectangle] % this is nearly a rectangle
\inheritanchorborder[from=rectangle]
\inheritanchor[from=rectangle]{center}
\inheritanchor[from=rectangle]{north}
\inheritanchor[from=rectangle]{south}
\inheritanchor[from=rectangle]{west}
\inheritanchor[from=rectangle]{east}
\backgroundpath{% this is new
\southwest \pgf@xa=\pgf@x \pgf@ya=\pgf@y
\northeast \pgf@xb=\pgf@x \pgf@yb=\pgf@y

\pgfmathsetmacro{\pgf@shorten@left}{\pgfkeysvalueof{/tikz/shorten left}}
\pgfmathsetmacro{\pgf@shorten@right}{\pgfkeysvalueof{/tikz/shorten right}}

\pgfpathmoveto{\pgfpoint{0.5 * (\pgf@xa + \pgf@xb)}{\pgf@ya - 5pt}}
\pgfpathlineto{\pgfpoint{\pgf@xa - 8pt + \pgf@shorten@left}{\pgf@yb - 1.5 * \pgf@shorten@left}}
\pgfpathlineto{\pgfpoint{\pgf@xa - 8pt + \pgf@shorten@left}{\pgf@yb}}
\pgfpathlineto{\pgfpoint{\pgf@xb + 8pt - \pgf@shorten@right}{\pgf@yb}}
\pgfpathlineto{\pgfpoint{\pgf@xb + 8pt - \pgf@shorten@right}{\pgf@yb - 1.5 * \pgf@shorten@right}}
\pgfpathclose
}
}

\pgfdeclareshape{cornercopoint}{
\inheritsavedanchors[from=rectangle] % this is nearly a rectangle
\inheritanchorborder[from=rectangle]
\inheritanchor[from=rectangle]{center}
\inheritanchor[from=rectangle]{north}
\inheritanchor[from=rectangle]{south}
\inheritanchor[from=rectangle]{west}
\inheritanchor[from=rectangle]{east}
\backgroundpath{% this is new
\southwest \pgf@xa=\pgf@x \pgf@ya=\pgf@y
\northeast \pgf@xb=\pgf@x \pgf@yb=\pgf@y

\pgfmathsetmacro{\pgf@shorten@left}{\pgfkeysvalueof{/tikz/shorten left}}
\pgfmathsetmacro{\pgf@shorten@right}{\pgfkeysvalueof{/tikz/shorten right}}

\pgfpathmoveto{\pgfpoint{0.5 * (\pgf@xa + \pgf@xb)}{\pgf@yb + 5pt}}
\pgfpathlineto{\pgfpoint{\pgf@xa - 8pt + \pgf@shorten@left}{\pgf@ya + 1.5 * \pgf@shorten@left}}
\pgfpathlineto{\pgfpoint{\pgf@xa - 8pt + \pgf@shorten@left}{\pgf@ya}}
\pgfpathlineto{\pgfpoint{\pgf@xb + 8pt - \pgf@shorten@right}{\pgf@ya}}
\pgfpathlineto{\pgfpoint{\pgf@xb + 8pt - \pgf@shorten@right}{\pgf@ya + 1.5 * \pgf@shorten@right}}
\pgfpathclose
}
}

\makeatother

\pgfkeyssetvalue{/tikz/shorten left}{0pt}
\pgfkeyssetvalue{/tikz/shorten right}{0pt}

\tikzstyle{kpoint common}=[draw,fill=white,inner sep=1pt,minimum height=4mm]
\tikzstyle{kpoint}=[shape=cornerpoint,shorten left=5pt,kpoint common]
\tikzstyle{kpoint adjoint}=[shape=cornercopoint,shorten left=5pt,kpoint common]
\tikzstyle{kpoint conjugate}=[shape=cornerpoint,shorten right=5pt,kpoint common]
\tikzstyle{kpoint transpose}=[shape=cornercopoint,shorten right=5pt,kpoint common]

\tikzstyle{kpointdag}=[kpoint adjoint]
\tikzstyle{kpointadj}=[kpoint adjoint]
\tikzstyle{kpointconj}=[kpoint conjugate]
\tikzstyle{kpointtrans}=[kpoint transpose]

\tikzstyle{big kpoint}=[kpoint, minimum width=1.0 cm, minimum height=2mm, inner sep=4pt, text depth=1.5mm]

 \tikzstyle{upground}=[circuit ee IEC,thick,ground,rotate=90,scale=1.5]
 \tikzstyle{downground}=[circuit ee IEC,thick,ground,rotate=-90,scale=1.5]

\tikzstyle{smallcirc}=[circle,fill=white,draw=black]
\tikzstyle{plain}=[-,draw=black,line width=2.000]
\tikzstyle{process}=[rectangle,fill=white,draw=black]
\tikzstyle{none}=[inner sep=0pt]
\tikzstyle{discarding}=[fill=white, draw=black, shape=circle, style=upground]
\tikzstyle{smalldiscarding}=[fill=white, draw=black, style=upground, scale=0.5]
\tikzstyle{backdiscard}=[fill=white, draw=black, shape=circle, style=downground, scale=0.5]
\tikzstyle{smallbackdiscard}=[fill=white, draw=black, shape=circle, style=downground, scale=0.5]
\tikzstyle{state}=[fill=white, draw=black, style=triang, tikzit shape=rectangle]
\tikzstyle{kstate}=[fill=white, draw=black, style=kpoint, tikzit shape=rectangle]
\tikzstyle{kstateconj}=[fill=white, draw=black, style=kpoint conjugate, tikzit shape=rectangle]
\tikzstyle{kstateBIG}=[fill=white, draw=black, style=big kpoint, tikzit shape=rectangle]
\tikzstyle{effect}=[fill=white, draw=black, style=triangdag]
\tikzstyle{keffect}=[fill=white, draw=black, style=kpoint adjoint]
\tikzstyle{keffectconj}=[fill=white, draw=black, style=kpoint transpose]
\tikzstyle{morphdag}=[style=mapdag]
\tikzstyle{morph}=[style=hadamard]
\tikzstyle{WIDEmorph}=[style=hadamard, minimum width=14mm]
\tikzstyle{morphtrans}=[style=maptrans]
\tikzstyle{morphconj}=[style=mapconj]
\tikzstyle{CPMmorph}=[style=dmap]
\tikzstyle{CPMmorphconj}=[style=dmapconj]
\tikzstyle{CPMmorphdag}=[style=dmapdag]
\tikzstyle{CPMmorphtrans}=[style=dmaptrans]
\tikzstyle{CPMstate}=[fill=white, draw=black, style=triang, doubled]
\tikzstyle{CPMstateBIG}=[fill=white, draw=black, style={triang_lesssep}, doubled]
\tikzstyle{CPMkstate}=[fill=white, draw=black, style=kpoint, tikzit shape=rectangle, doubled]
\tikzstyle{CPMkstateconj}=[fill=white, draw=black, style=kpoint conjugate, tikzit shape=rectangle, doubled]
\tikzstyle{CPMkstateBIG}=[fill=white, draw=black, style=big kpoint, tikzit shape=rectangle, doubled]
\tikzstyle{CPMkeffect}=[fill=white, draw=black, style=kpoint adjoint, doubled]
\tikzstyle{CPMkeffectconj}=[fill=white, draw=black, style=kpoint transpose, doubled]
\tikzstyle{UHfB}=[fill=white, draw=black, style=triangdag, doubled, inner sep=-2pt]
\tikzstyle{leak}=[style=tinypoint, regular polygon rotate=-90]
\tikzstyle{leakfill}=[style=tinypoint, regular polygon rotate=-90, fill=black]
\tikzstyle{Z}=[style=dot, fill=green]
\tikzstyle{X}=[style=dot, fill=red]
\tikzstyle{black_dot}=[style=dot, fill=black]
\tikzstyle{white_dot}=[style=dot, fill=white]
\tikzstyle{qblack_dot}=[style=ddot, fill=black]
\tikzstyle{qwhite_dot}=[style=ddot, fill=white]
\tikzstyle{whitephase}=[style=wphase dot, fill=white]
\tikzstyle{qredphase}=[style=phase dot, fill=red]
\tikzstyle{qgreenphase}=[style=phase dot, fill=green]
\tikzstyle{had}=[style=hadamard, doubled]
\tikzstyle{box}=[style=hadamard]
\tikzstyle{classhad}=[style=hadamard]
\tikzstyle{antipode}=[style=anti]

\tikzstyle{dottededge}=[-, dotted]
\tikzstyle{double edge}=[-, style=doubled, draw=black, tikzit draw={rgb,255: red,18; green,168; blue,191}]
\tikzstyle{new edge style 0}=[<-]
\tikzstyle{new edge style 1}=[-, draw={rgb,255: red,223; green,66; blue,126}, fill={rgb,255: red,223; green,66; blue,126}]
\tikzstyle{new edge style 2}=[-, draw={rgb,255: red,14; green,188; blue,83}]
\tikzstyle{new edge style 3}=[<-, draw={rgb,255: red,223; green,66; blue,126}]
\tikzstyle{new edge style 4}=[<-, draw={rgb,255: red,0; green,128; blue,128}]
\tikzstyle{new edge style 5}=[-, draw={rgb,255: red,214; green,110; blue,62}]
\tikzstyle{new edge style 6}=[-, draw={rgb,255: red,174; green,20; blue,174}]
\tikzstyle{new edge style 7}=[-]

\usetikzlibrary{calc, positioning, shapes.geometric}
\usetikzlibrary{
	arrows,
	shapes,
	decorations,
	intersections,
	backgrounds,
	positioning,
	circuits.ee.IEC
	}

\newcommand{\tikzfigscale}[2]{\scalebox{#1}{\input{#2.tikz}}}

\newcommand{\cat}{\mathbf}

\newcommand{\putt}[1]{\mathbin{\uparrow_{#1}}}
\newcommand{\get}{\raisebox{-0.01cm}{\includegraphics[scale=0.02]{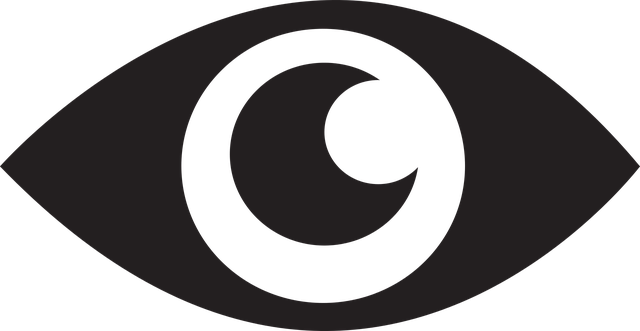}}}
\newcommand{\mix}[1]{\sim_{#1}}

\newcommand{\wfrob}{
\mathbin{\begin{tikzpicture}
	\begin{pgfonlayer}{nodelayer}
		\node [style={white_dot}, scale=0.6] (0) at (0, 0) {};
		\node [style=none] (1) at (0, 0.2) {};
		\node [style=none] (2) at (-0.2, -0.2) {};
		\node [style=none] (3) at (0.2, -0.2) {};
	\end{pgfonlayer}
	\begin{pgfonlayer}{edgelayer}
		\draw (1.center) to (0);
		\draw [in=90, out=-150] (0) to (2.center);
		\draw [in=-270, out=-30] (0) to (3.center);
	\end{pgfonlayer}
\end{tikzpicture}}}

\theoremstyle{definition}
\newtheorem{defn}{Definition}
\theoremstyle{plain}
\newtheorem{prop}{Proposition}
\theoremstyle{plain}
\newtheorem{corr}{Corollary}
\theoremstyle{definition}
\newtheorem{example}{Example}

\title{Categories of tagged lenses}
\author{Matthew Wilson\\
Universit\'e Paris-Saclay, CNRS, ENS Paris-Saclay, Inria,\\
Laboratoire M\'ethodes Formelles,\\
91190 Gif-sur-Yvette, France\\
\texttt{matthew.wilson@centralesupelec.fr}}
\date{}

\begin{document}

\maketitle

\begin{abstract}
To better understand the landscape of well behaviour laws for lenses on strict cartesian symmetric monoidal categories, we define \textit{tagged lenses}. We prove that tagged lenses form a strict symmetric monoidal category, equipped with a strict symmetric monoidal functor into the category of \textbf{putget} lenses. We then observe compositional entailment for the other two lens laws, identifying change-dependence and first-last dependence of tags as sufficient conditions for \textbf{getput} and \textbf{putput} respectively.     % As a result we enlarge the hierarchy of compositional lens classes, to include those with side-effects which satisfy particular design laws.
\end{abstract}

\section{Introduction}
Lenses in their various forms can be used to model update and read-out procedures for substructures of global data structures \cite{foster_combinators_nodate, bancilhon_update_1981}. When equipped with so-called  "very-well-behaviour" (vwb) laws, set-based lenses \cite{foster_combinators_nodate} can be interpreted as coming with a guarantee that they perform modification and examination without noise or side-effect. 

Recent works aimed to give meaning to the relaxation of vwb laws by examining dependencies between different relaxed law sets \cite{fischer_lens, nakano, lens_putput}, in which natural examples appear as modifications to very-well-behaved lenses by adding primitive auxiliary tagging mechanisms. This exploration of the interpretation of subsets of the lens laws, whilst enlightening, is however left at the heuristic level without precise formalisation.

In this paper we aim to develop a structural way to understand the meaning of relaxed lens laws by introducing a simple formal notion of auxiliary counter termed a \textit{tag}. Tags are a restricted class of side-effects which can be assigned behaviour laws which entail well-behaviour laws on the lenses from which they are constructed. There are already effective approaches \cite{abou_monads, monadic_combinator, monads_entangled_notions, monadic_composing} to introducing general side effects into lenses, the motivation for the definition of tagged lenses is not to propose an alternative to these more general approaches, but to give a class of lenses which is just broad enough for formal demonstration of the dependency between side-effect behaviour and well-behaviour laws. Our contributions to the theory of set-based, and more generally cartesian monoidal, lenses are:
\begin{itemize}
    \item tagged vwb-lenses form a strict symmetric monoidal category;
    \item the induced lens construction defines a strict symmetric monoidal functor into the category of putget lenses; and
    \item the three standard lens laws arise compositionally: putget for every tagged lens, getput under change-dependence, and putput under first-last dependence.
\end{itemize}
In doing so, we provide a simple formalisation of the interaction between auxiliary counters and lens laws, with the aim of exploring in future work whether this connection extends to more general kinds of lenses with side-effects, to lenses in non-cartesian categories, and to more general optic constructions.

\section{Preliminary Material}
Lenses will be introduced as set-based, that is, as functions between sets. The graphical circuit-based language for cartesian monoidal categories is then introduced. Throughout this paper we will present many definitions and proofs in both standard algebraic language and the graphical language for cartesian monoidal categories. This is with the aim of being accessible across disciplines.
\subsection{Lenses}
Very well behaved lenses \cite{foster_combinators_nodate} consist of a pair $(p:S\times V \rightarrow S,g:S \rightarrow V)$ of functions, with $S$ interpreted as global data structure and $V$ as some substructure. The ``put" $p:S\times V \rightarrow S$ uses each value $v$ of the substructure $V$ to define an update $p_{v}:S \rightarrow S$ on the global structure $S$. The ``get" $g:S \rightarrow V$ extracts from each value $s$ of the global structure $S$ a value $v$ of the substructure $V$. A strict way to enforce that $p,g$ behave like update and read-out is to impose equations between terms that can be built from them.
\begin{defn}[Very-Well-Behaved Lens]
A lens from set $V$ to set $S$ is a pair of functions $(g:S\rightarrow V,p:S \times V \rightarrow S)$ \cite{foster_combinators_nodate}. A lens is further \emph{very-well-behaved} (\emph{vwb}) if it satisfies the following:
\begin{itemize}
    \item \textbf{putput}: $p(p(s,v_1),v_2) = p(s,v_2)$
    \item \textbf{putget}: $g(p(s,v)) = v$
    \item \textbf{getput}: $p(s,g(s)) = s$
\end{itemize}
\end{defn}
Lenses are termed well-behaved (wb) if they satisfy only the last two laws and will here be termed putget (pg) if they satisfy only the \textbf{putget} law. Set-based lenses can be composed in sequence and in parallel to form a symmetric monoidal category \cite{Riley2018CategoriesOO}. We will use the following running examples, all mined or adapted from \cite{fischer_lens, nakano}, noting the lens laws that they satisfy and the intuitive reasons for satisfaction of those laws which this paper aims to abstract and formalise.
\begin{example}[Basic vwb-lens]
\begin{align*}
    S &= U \times V \\
    \texttt{put}((u,v),v') &= (u,v') \\
    g(u,v) &= v
\end{align*}
\end{example}
It is easy to check that this pair defines a vwb-lens.
\begin{example}[PutCount]
\begin{align*}
    S &= \mathbf{N} \times V \\
    \texttt{PutCount}((i,v),v') &= (i+1,v') \\
    g(i,v) &= v
\end{align*}
\end{example}
Note that \texttt{PutCount} satisfies \textbf{putget} because the get does not see the counter. 
\begin{example}[PutCountView]
\begin{align*}
    S &= \Big(\prod_{w \in V} \mathbf{N}\Big) \times V \\
    \texttt{PutCountView}(((i_w)_{w \in V},v),v') &= ((j_w)_{w \in V},v') \\
    g((i_w)_{w \in V},v) &= v
\end{align*}
where $j_{v'} = i_{v'} + 1$ and $j_w = i_w$ for $w \neq v'$.
\end{example}
Note that \texttt{PutCount} satisfies \textbf{putget} because the get does not see the counter.
\begin{example}[PutCountChanges]
\[
\texttt{PutCountChanges}((i,v),v')=
\begin{cases}
(i+1,v') & v \neq v',\\
(i,v') & v = v',
\end{cases}
\qquad
g(i,v)=v.
\]
\end{example}
Note that \texttt{PutCount} satisfies \textbf{putget} because the get does not see the counter, but furthermore, is satisfies \textbf{getput} because only non-trivial changes are recorded by the counter. 
\begin{example}
The lens \texttt{PutCountViewChanges} on $S = \big(\prod_{w \in V} \mathbf{N}\big) \times V$ is obtained from \texttt{PutCountView} by recording only genuine changes in the view:
\[
((i_w)_{w \in V},v),v' \mapsto ((j_w)_{w \in V},v').
\]
When $v \neq v'$, we set $j_{v'} = i_{v'} + 1$ and $j_w = i_w$ for $w \neq v'$. When $v = v'$, we set $j_w = i_w$ for all $w$. Again $g((i_w)_{w \in V},v)=v$.
\end{example}
Note that \texttt{PutCount} satisfies \textbf{putget} because the get does not see the counter, but furthermore, is satisfies \textbf{getput} because only non-trivial changes are recorded by the counter. 
\begin{example}[PutFlag]
\[
\texttt{PutFlag}((b,v),v')=(1,v'),
\qquad
g(b,v)=v.
\]
\end{example}
Note that \texttt{PutCount} satisfies \textbf{putget} because the get does not see the counter, but furthermore, is satisfies \textbf{putput} because the flag only records whether an update has occurred, it does not count the number of updates. 
\begin{example}[PutScaled]
\[
\texttt{PutScaled}((x,y),v)=(v,vy/x).
\]
\end{example}
All of the counting examples satisfy putget because they are induced by tagged lenses. The change-sensitive examples satisfy getput because they record only genuine changes, while \texttt{PutFlag} satisfies putput because its tag is first-last depending. The vwb laws for \texttt{PutScaled} can likewise be understood as consequences of its tag behaviour.
\subsection{String Diagrams for Cartesian Monoidal Categories}
A monoidal category is a category equipped with a notion of parallel composition $\otimes$ of objects and of morphisms, the definitions of categories and monoidal categories can be found here \cite{maclane:71}. The category of functions between sets can be viewed as a monoidal category in which the cartesian product $\times$ takes the place of $\otimes$. Any term of morphisms in a symmetric monoidal category may be expressed as a ``string diagram" in which each object (set) $A$ is represented by a wire, each morphism (function) $f:A \rightarrow B$ is represented by a box with input wire $A$ and output wire $B$, sequential composition $f \circ g$ is expressed by routing the output wire of $g$ to the input wire of $f$ and parallel composition of morphisms is expressed by placing boxes next to each-other.
\[\begin{tikzpicture}[scale=0.7, {every node/.style}={scale=0.7}]
	\begin{pgfonlayer}{nodelayer}
		\node [style=none] (0) at (-3.75, 1.5) {};
		\node [style=none] (1) at (-3.75, 0.5) {};
		\node [style=none] (2) at (-3.25, -1) {$A$};
		\node [style=none] (3) at (-4.25, 0.5) {};
		\node [style=none] (4) at (-3.25, 0.5) {};
		\node [style=none] (5) at (-3.25, -0.5) {};
		\node [style=none] (6) at (-4.25, -0.5) {};
		\node [style=none] (7) at (-3.75, 0) {$f$};
		\node [style=none] (8) at (-3.75, -0.5) {};
		\node [style=none] (9) at (-3.75, -1.25) {};
		\node [style=none] (10) at (-3.25, 1) {$B$};
		\node [style=none] (11) at (-1.75, 1.5) {};
		\node [style=none] (12) at (-1.75, 0.5) {};
		\node [style=none] (13) at (-1.25, -1) {$C$};
		\node [style=none] (14) at (-2.25, 0.5) {};
		\node [style=none] (15) at (-1.25, 0.5) {};
		\node [style=none] (16) at (-1.25, -0.5) {};
		\node [style=none] (17) at (-2.25, -0.5) {};
		\node [style=none] (18) at (-1.75, 0) {$g$};
		\node [style=none] (19) at (-1.75, -0.5) {};
		\node [style=none] (20) at (-1.75, -1.25) {};
		\node [style=none] (21) at (-1.25, 1) {$D$};
		\node [style=none] (24) at (2.5, -2) {$A$};
		\node [style=none] (25) at (1.5, -0.5) {};
		\node [style=none] (26) at (2.5, -0.5) {};
		\node [style=none] (27) at (2.5, -1.5) {};
		\node [style=none] (28) at (1.5, -1.5) {};
		\node [style=none] (29) at (2, -1) {$f$};
		\node [style=none] (30) at (2, -1.5) {};
		\node [style=none] (31) at (2, -2.25) {};
		\node [style=none] (32) at (2.5, 0) {$B$};
		\node [style=none] (33) at (2, 2.5) {};
		\node [style=none] (34) at (2, 1.5) {};
		\node [style=none] (36) at (1.5, 1.5) {};
		\node [style=none] (37) at (2.5, 1.5) {};
		\node [style=none] (38) at (2.5, 0.5) {};
		\node [style=none] (39) at (1.5, 0.5) {};
		\node [style=none] (40) at (2, 1) {$g$};
		\node [style=none] (41) at (2, 0.5) {};
		\node [style=none] (42) at (2, -0.5) {};
		\node [style=none] (43) at (2.5, 2) {$C$};
	\end{pgfonlayer}
	\begin{pgfonlayer}{edgelayer}
		\draw (1.center) to (0.center);
		\draw [style=none] (3.center) to (4.center);
		\draw [style=none] (4.center) to (5.center);
		\draw [style=none] (5.center) to (6.center);
		\draw [style=none] (6.center) to (3.center);
		\draw (9.center) to (8.center);
		\draw (12.center) to (11.center);
		\draw [style=none] (14.center) to (15.center);
		\draw [style=none] (15.center) to (16.center);
		\draw [style=none] (16.center) to (17.center);
		\draw [style=none] (17.center) to (14.center);
		\draw (20.center) to (19.center);
		\draw [style=none] (25.center) to (26.center);
		\draw [style=none] (26.center) to (27.center);
		\draw [style=none] (27.center) to (28.center);
		\draw [style=none] (28.center) to (25.center);
		\draw (31.center) to (30.center);
		\draw (34.center) to (33.center);
		\draw [style=none] (36.center) to (37.center);
		\draw [style=none] (37.center) to (38.center);
		\draw [style=none] (38.center) to (39.center);
		\draw [style=none] (39.center) to (36.center);
		\draw (42.center) to (41.center);
	\end{pgfonlayer}
\end{tikzpicture}\]
The category $\mathbf{Set}$ in which lenses were originally formulated belongs to the special class of \textit{cartesian} monoidal categories, which are monoidal categories equipped with a few additional features \cite{heunen2019categories}
\begin{itemize}
    \item a family of copying maps $c_A: A \rightarrow A \times A$ for each object $A$, expressed by dots, and such that the following holds \[\begin{tikzpicture}[scale=0.7, {every node/.style}={scale=0.7}]
	\begin{pgfonlayer}{nodelayer}
		\node [style={white_dot}] (55) at (-2.25, -0.75) {};
		\node [style=none] (56) at (-3, 0) {};
		\node [style=none] (57) at (-1.5, 0) {};
		\node [style=none] (58) at (-2.25, -1.5) {};
		\node [style={white_dot}] (59) at (0.5, -0.75) {};
		\node [style=none] (60) at (0, 0) {};
		\node [style=none] (61) at (1, 0) {};
		\node [style=none] (62) at (0.5, -1.5) {};
		\node [style={white_dot}] (63) at (1, -0.75) {};
		\node [style=none] (64) at (0.5, 0) {};
		\node [style=none] (65) at (1.5, 0) {};
		\node [style=none] (66) at (1, -1.5) {};
		\node [style=none] (67) at (-0.75, -0.75) {=};
		\node [style=none] (68) at (-2.25, -2) {$A \otimes B$};
		\node [style=none] (69) at (-3, 0.5) {$A \otimes B$};
		\node [style=none] (70) at (-1.5, 0.5) {$A \otimes B$};
		\node [style=none] (71) at (0.5, -2) {$A$};
		\node [style=none] (72) at (0, 0.5) {$A$};
		\node [style=none] (73) at (1, 0.5) {$A$};
		\node [style=none] (74) at (1, -2) {$B$};
		\node [style=none] (75) at (0.5, 0.5) {$B$};
		\node [style=none] (76) at (1.5, 0.5) {$B$};
	\end{pgfonlayer}
	\begin{pgfonlayer}{edgelayer}
		\draw (58.center) to (55);
		\draw [bend left] (55) to (56.center);
		\draw [bend right] (55) to (57.center);
		\draw (62.center) to (59);
		\draw [bend left] (59) to (60.center);
		\draw [bend right] (59) to (61.center);
		\draw (66.center) to (63);
		\draw [bend left] (63) to (64.center);
		\draw [bend right] (63) to (65.center);
	\end{pgfonlayer}
\end{tikzpicture}\]
    \item a deletion map $d:A \rightarrow I$ for each object $A$ \[\begin{tikzpicture}[scale=0.7, {every node/.style}={scale=0.7}]
	\begin{pgfonlayer}{nodelayer}
		\node [style={white_dot}] (77) at (-1.75, 0.5) {};
		\node [style=none] (78) at (-1.75, -0.5) {};
		\node [style=none] (79) at (-1.75, -1) {$A\otimes B$};
		\node [style={white_dot}] (80) at (0.25, 0.5) {};
		\node [style=none] (81) at (0.25, -0.5) {};
		\node [style=none] (82) at (0.25, -1) {$A$};
		\node [style={white_dot}] (83) at (0.75, 0.5) {};
		\node [style=none] (84) at (0.75, -0.5) {};
		\node [style=none] (86) at (-0.75, 0) {$=$};
		\node [style=none] (87) at (0.75, -1) {$B$};
	\end{pgfonlayer}
	\begin{pgfonlayer}{edgelayer}
		\draw (78.center) to (77);
		\draw (81.center) to (80);
		\draw (84.center) to (83);
	\end{pgfonlayer}
\end{tikzpicture}\]
\end{itemize}
the copying and deleting map are additionally required to satisfy the following conditions:
\[\begin{tikzpicture}[scale=0.7, {every node/.style}={scale=0.7}]
	\begin{pgfonlayer}{nodelayer}
		\node [style={white_dot}] (77) at (-2.25, 0) {};
		\node [style=none] (78) at (-2.25, -0.75) {};
		\node [style=none] (86) at (-1, 0) {$=$};
		\node [style=none] (93) at (-2.75, 0.75) {};
		\node [style={white_dot}] (94) at (-1.75, 0.75) {};
		\node [style=none] (105) at (0, 0.5) {};
		\node [style=none] (109) at (0, -0.5) {};
		\node [style={white_dot}] (110) at (2.25, 0) {};
		\node [style=none] (111) at (2.25, -0.75) {};
		\node [style=none] (112) at (1, 0) {$=$};
		\node [style=none] (113) at (2.75, 0.75) {};
		\node [style={white_dot}] (114) at (1.75, 0.75) {};
	\end{pgfonlayer}
	\begin{pgfonlayer}{edgelayer}
		\draw (78.center) to (77);
		\draw [bend left] (94) to (77);
		\draw [bend right] (93.center) to (77);
		\draw (105.center) to (109.center);
		\draw (111.center) to (110);
		\draw [bend right] (114) to (110);
		\draw [bend left] (113.center) to (110);
	\end{pgfonlayer}
\end{tikzpicture}\]
and every morphism $f:A \rightarrow B$ is required to be a comonoid homomorphism from $c_A$ to $c_B$, meaning the following:
\[\begin{tikzpicture}[scale=0.7, {every node/.style}={scale=0.7}]
	\begin{pgfonlayer}{nodelayer}
		\node [style={white_dot}] (77) at (-2.25, 1) {};
		\node [style=box] (78) at (-2.25, 0) {$f$};
		\node [style=none] (86) at (-1, 0) {$=$};
		\node [style=none] (87) at (-2.25, -1) {};
		\node [style={white_dot}] (88) at (0, 0.5) {};
		\node [style=none] (89) at (0, -0.5) {};
	\end{pgfonlayer}
	\begin{pgfonlayer}{edgelayer}
		\draw (78) to (77);
		\draw (87.center) to (78);
		\draw (89.center) to (88);
	\end{pgfonlayer}
\end{tikzpicture} \quad \quad \quad \quad \begin{tikzpicture}[scale=0.7, {every node/.style}={scale=0.7}]
	\begin{pgfonlayer}{nodelayer}
		\node [style={white_dot}] (77) at (-2.5, 0.5) {};
		\node [style=box] (78) at (-2.5, -0.25) {$f$};
		\node [style=none] (86) at (-1, 0) {$=$};
		\node [style=none] (87) at (-2.5, -1) {};
		\node [style=none] (93) at (-3, 1.25) {};
		\node [style=none] (94) at (-2, 1.25) {};
		\node [style={white_dot}] (97) at (0.5, -0.25) {};
		\node [style=none] (98) at (0.5, -1) {};
		\node [style=none] (105) at (0, 1.25) {};
		\node [style=box] (106) at (0, 0.5) {$f$};
		\node [style=none] (109) at (1, 1.25) {};
		\node [style=box] (110) at (1, 0.5) {$f$};
	\end{pgfonlayer}
	\begin{pgfonlayer}{edgelayer}
		\draw (78) to (77);
		\draw (87.center) to (78);
		\draw [bend left] (94.center) to (77);
		\draw [bend right] (93.center) to (77);
		\draw (98.center) to (97);
		\draw (106) to (105.center);
		\draw (110) to (109.center);
		\draw [bend right] (106) to (97);
		\draw [bend left] (110) to (97);
	\end{pgfonlayer}
\end{tikzpicture}\]
In any cartesian monoidal category the lens laws read \cite{Riley2018CategoriesOO}
\begin{equation*}
    \tikzfigscale{0.7}{figs/prop1/lawfullens}
\end{equation*}
%furthermore example (1) is easily generalised to arbitrary Cartesian monoidal categories via the following definitions. 
%\[\tikzfig{tagfigs/simple_put} \quad \quad \quad \quad \tikzfig{tagfigs/simple_get}\]
%and as a quick exercise in using the graphical language note that the getput law follows immediately by \[ \tikzfig{tagfigs/simple_lens}\] as do the other two lens laws. 
%The graphical language for Cartesian monoidal categories is strong enough to prove that this basic example satisfies all three of the vwb-lens laws. 
The notions of $\mathbf{putget}$, $\mathbf{well behaved}$, and $\mathbf{vwb}$-lenses each define symmetric monoidal sub-categories of $\mathbf{L}[\mathcal{C}]$, the symmetric monoidal category of Lenses over $\mathcal{C}$ in which
\begin{itemize}
    \item Objects are the objects of $\mathcal{C}$
    \item Morphisms from $V$ to $V'$ are the lenses from $V$ to $V'$
\end{itemize}
and for which sequential composition \[(\putt{},g):=(\putt{}_2,g_2) \circ (\putt{}_1,g_1)\] is defined by:
\[\begin{tikzpicture}[scale=0.7, {every node/.style}={scale=0.7}]
	\begin{pgfonlayer}{nodelayer}
		\node [style=none] (0) at (-2.75, -0.5) {};
		\node [style=WIDEmorph] (1) at (-2.25, 0.5) {$\putt{}$};
		\node [style=none] (2) at (-2.75, 0.75) {};
		\node [style=none] (3) at (-2.75, 0.25) {};
		\node [style=none] (4) at (-1.75, 0.5) {};
		\node [style=none] (5) at (-1.75, -0.5) {};
		\node [style=none] (6) at (-2.75, 1.5) {};
		\node [style=none] (7) at (-0.75, 0.25) {$=$};
		\node [style=WIDEmorph] (8) at (1.75, 0.5) {$\putt{}_1$};
		\node [style=none] (9) at (1.25, 0.25) {};
		\node [style=morph] (10) at (1.25, -0.5) {$g_2$};
		\node [style=WIDEmorph] (11) at (0.75, 1.75) {$\putt{}_2$};
		\node [style=none] (12) at (0.25, 1.5) {};
		\node [style=none] (13) at (1.25, 1.75) {};
		\node [style=none] (14) at (2.25, -1.25) {};
		\node [style={white_dot}] (15) at (0.5, -1.25) {};
		\node [style=none] (16) at (0.5, -2) {};
		\node [style=none] (17) at (0.25, 2) {};
		\node [style=none] (18) at (0.25, 2.75) {};
		\node [style=none] (19) at (1.25, 0.75) {};
		\node [style=none] (20) at (2.25, 0.5) {};
	\end{pgfonlayer}
	\begin{pgfonlayer}{edgelayer}
		\draw [in=90, out=-90] (4.center) to (5.center);
		\draw (2.center) to (6.center);
		\draw [in=-90, out=90, looseness=1.25] (0.center) to (3.center);
		\draw [in=90, out=-90] (9.center) to (10);
		\draw [in=135, out=-90, looseness=0.75] (12.center) to (15);
		\draw (15) to (16.center);
		\draw [in=-90, out=0] (15) to (10);
		\draw (17.center) to (18.center);
		\draw [in=90, out=-90, looseness=1.25] (13.center) to (19.center);
		\draw [in=90, out=-90] (20.center) to (14.center);
	\end{pgfonlayer}
\end{tikzpicture} \quad \quad \quad \quad \begin{tikzpicture}[scale=0.7, {every node/.style}={scale=0.7}]
	\begin{pgfonlayer}{nodelayer}
		\node [style=morph] (0) at (-1.25, 0) {$g$};
		\node [style=none] (1) at (-1.25, 0.75) {};
		\node [style=none] (2) at (-1.25, -0.75) {};
		\node [style=none] (3) at (-0.25, 0) {$=$};
		\node [style=morph] (4) at (0.75, 0.5) {$g_1$};
		\node [style=none] (5) at (0.75, 1.25) {};
		\node [style=morph] (6) at (0.75, -0.5) {$g_2$};
		\node [style=none] (7) at (0.75, -1.5) {};
	\end{pgfonlayer}
	\begin{pgfonlayer}{edgelayer}
		\draw (1.center) to (0);
		\draw (2.center) to (0);
		\draw (5.center) to (4);
		\draw (4) to (6);
		\draw (7.center) to (6);
	\end{pgfonlayer}
\end{tikzpicture}\]
and for which parallel composition is defined by:
\[\begin{tikzpicture}[scale=0.7, {every node/.style}={scale=0.7}]
	\begin{pgfonlayer}{nodelayer}
		\node [style=none] (187) at (-3, -0.75) {};
		\node [style=WIDEmorph] (188) at (-2.5, 0.25) {$\putt{}$};
		\node [style=none] (189) at (-3, 0.5) {};
		\node [style=none] (190) at (-3, 0) {};
		\node [style=none] (191) at (-2, 0.25) {};
		\node [style=none] (192) at (-2, -0.75) {};
		\node [style=none] (193) at (-3, 1.25) {};
		\node [style=none] (194) at (-1, 0) {$=$};
		\node [style=none] (199) at (-0.25, -1) {};
		\node [style=WIDEmorph] (200) at (0.25, 0.5) {$\putt{}_1$};
		\node [style=none] (201) at (-0.25, 0.75) {};
		\node [style=none] (202) at (-0.25, 0.25) {};
		\node [style=none] (203) at (0.75, 0.5) {};
		\node [style=none] (204) at (2, -1) {};
		\node [style=none] (205) at (-0.25, 1.5) {};
		\node [style=none] (206) at (0.75, -1) {};
		\node [style=WIDEmorph] (207) at (2.25, 0.5) {$\putt{}_2$};
		\node [style=none] (208) at (1.75, 0.75) {};
		\node [style=none] (209) at (1.75, 0.25) {};
		\node [style=none] (210) at (2.75, 0.5) {};
		\node [style=none] (211) at (2.75, -1) {};
		\node [style=none] (212) at (1.75, 1.5) {};
	\end{pgfonlayer}
	\begin{pgfonlayer}{edgelayer}
		\draw [in=90, out=-90] (191.center) to (192.center);
		\draw (189.center) to (193.center);
		\draw [in=-90, out=90, looseness=1.25] (187.center) to (190.center);
		\draw [in=90, out=-90] (203.center) to (204.center);
		\draw (201.center) to (205.center);
		\draw [in=-90, out=90, looseness=1.25] (199.center) to (202.center);
		\draw [in=90, out=-90] (210.center) to (211.center);
		\draw (208.center) to (212.center);
		\draw [in=-90, out=90, looseness=1.25] (206.center) to (209.center);
	\end{pgfonlayer}
\end{tikzpicture} \quad \quad \quad \quad \begin{tikzpicture}[scale=0.7, {every node/.style}={scale=0.7}]
	\begin{pgfonlayer}{nodelayer}
		\node [style=morph] (130) at (-2, 0) {$g$};
		\node [style=none] (131) at (-2, 0.75) {};
		\node [style=none] (132) at (-2, -0.75) {};
		\node [style=none] (184) at (0, 0.75) {};
		\node [style=morph] (185) at (0, 0) {$g_1$};
		\node [style=none] (186) at (0, -0.75) {};
		\node [style=none] (232) at (-1, 0) {$=$};
		\node [style=morph] (233) at (1, 0) {$g_2$};
		\node [style=none] (234) at (1, 0.75) {};
		\node [style=none] (236) at (1, -0.75) {};
	\end{pgfonlayer}
	\begin{pgfonlayer}{edgelayer}
		\draw (131.center) to (130);
		\draw (132.center) to (130);
		\draw (186.center) to (185);
		\draw (234.center) to (233);
		\draw (184.center) to (185);
		\draw (233) to (236.center);
	\end{pgfonlayer}
\end{tikzpicture}\]
The subcategories of $\mathbf{putget}$, $\mathbf{well-behaved}$, and $\mathbf{vwb}$ lenses will be denoted $\mathbf{pgL}[\mathcal{C}]$, $\mathbf{wbL}[\mathcal{C}]$, and $\mathbf{vwbL}[\mathcal{C}]$ respectively.

%Having lost the putput law, we would not know for example, that there really is a view part of the scource which depends only on the most recent update - even though we secretly know there is! The earlier views matter, but only in so much as they influence the tags. Of course one can imagine lenses which drop the putput law for reasons other than that they apply some tag.
\section{Tagged vwb-Lenses}
Our goal is to show that each lens law is entailed in a compositional way by a design law for side-effects; the aim of this first section is to introduce a limited class of side-effects broad enough to demonstrate this point. We model the notion of a \textit{tagging} side-effect, which we abbreviate to \textit{tag}, by abstracting the notion of a family of morphisms indexed by the possible old and new values of the view during update. In the category $\mathbf{Set}$ this is captured by a three-input function $T:S \times V \times V \rightarrow S$, parameterised by two copies of the view system. Graphically such a function is denoted by:
\[\begin{tikzpicture}[scale=0.7, {every node/.style}={scale=0.7}]
	\begin{pgfonlayer}{nodelayer}
		\node [style=WIDEmorph] (0) at (-0.75, 0) {$T$};
		\node [style=none] (1) at (-1.25, 0.25) {};
		\node [style=none] (2) at (-1.25, -0.25) {};
		\node [style=none] (3) at (-0.5, 0) {};
		\node [style=none] (4) at (-0.25, 0) {};
		\node [style=none] (5) at (-0.5, -1) {};
		\node [style=none] (6) at (-0.25, -1) {};
		\node [style=none] (7) at (-1.25, 1) {};
		\node [style=none] (8) at (-1.25, -1) {};
		\node [style=none] (9) at (-3.25, 1.25) {\texttt{System}};
		\node [style=none] (10) at (2, 0.75) {\texttt{Updated View}};
		\node [style=none] (11) at (1.25, -2) {\texttt{Prior View}};
		\node [style=none] (12) at (-3.25, 1) {};
		\node [style=none] (13) at (-1.5, 1) {};
		\node [style=none] (14) at (-1.5, -1) {};
		\node [style=none] (15) at (-0.5, -1.25) {};
		\node [style=none] (16) at (0, -0.75) {};
		\node [style=none] (17) at (1.25, -1.75) {};
		\node [style=none] (18) at (2, 0.5) {};
	\end{pgfonlayer}
	\begin{pgfonlayer}{edgelayer}
		\draw (2.center) to (8.center);
		\draw (3.center) to (5.center);
		\draw (4.center) to (6.center);
		\draw (1.center) to (7.center);
		\draw [style=new edge style 0, in=315, out=-165] (14.center) to (12.center);
		\draw [style=new edge style 0] (13.center) to (12.center);
		\draw [style=new edge style 4, in=195, out=0] (16.center) to (18.center);
		\draw [style=new edge style 3, in=165, out=-90, looseness=0.75] (15.center) to (17.center);
	\end{pgfonlayer}
\end{tikzpicture}\]
This is a family of processes in the sense that for each prior view $p$ and future view $f$ there exists a tagging process $T_{pf}: = T(-,p,f):S \rightarrow S$ given by inserting $p$ and $f$ into $T$. %, written graphically this is: %\[\tikzfig{tagfigs/tagmember}\]
We expect such a family of tags to satisfy one basic condition: \textit{each tag in the family should have no effect on the view property of the system}. Each tag should commute with $\putt{}$ and have no effect on the get $g$. These conditions can be expressed algebraically by
\begin{align*}
    & \putt{}(T(s,v_1,v_2),v_3) = T(\putt{}(s,v_3),v_1,v_2) \\
    & g(T(s,v_1,v_2)) = g(s)
\end{align*}
%\[\tikzfig{tagfigs/tagmembercon1}\]
%    \item Tags should not track the order of events only some metric for the number of occurrences - All members of the family of tags can be commuted amongst themselves. \[\tikzfig{tagfigs/tagmembercon2}\]

When written in point-free notation the above condition leads to the following definition for a tag $T$ with respect to a lens $(\putt{},g)$.
\begin{defn}
For any pair of objects $V$,$S$ in a cartesian monoidal category $\mathcal{C}$ a tuple $(\putt{},g,T:S \times V \times V \rightarrow S)$ is a \textit{tagged vwb-lens} from $V$ to $S$ if $(\putt{},g)$ is a vwb-lens from $V$ to $S$ and the morphisms $\putt{},g,T$ together satisfy the following conditions:
\[\begin{tikzpicture}[scale=0.7, {every node/.style}={scale=0.7}]
	\begin{pgfonlayer}{nodelayer}
		\node [style=WIDEmorph] (0) at (-7.25, -0.5) {$T$};
		\node [style=none] (1) at (-7.75, -0.25) {};
		\node [style=none] (2) at (-7.75, -0.75) {};
		\node [style=none] (3) at (-7, -0.5) {};
		\node [style=none] (4) at (-6.75, -0.5) {};
		\node [style=none] (5) at (-7, -1.5) {};
		\node [style=none] (6) at (-6.75, -1.5) {};
		\node [style=none] (8) at (-7.75, -1.5) {};
		\node [style=WIDEmorph] (9) at (-7, 1) {$\putt{}$};
		\node [style=none] (10) at (-7.5, 1.25) {};
		\node [style=none] (11) at (-7.5, 0.75) {};
		\node [style=none] (13) at (-6.5, 1) {};
		\node [style=none] (15) at (-6, -1.5) {};
		\node [style=none] (16) at (-7.5, 2) {};
		\node [style=none] (17) at (-5.5, 0) {$=$};
		\node [style=WIDEmorph] (18) at (-3.75, 1) {$T$};
		\node [style=none] (19) at (-4.75, 0) {};
		\node [style=none] (20) at (-4.75, -0.5) {};
		\node [style=none] (21) at (-3.5, 1) {};
		\node [style=none] (22) at (-3.25, 1) {};
		\node [style=none] (23) at (-3, -1.5) {};
		\node [style=none] (24) at (-2.75, -1.5) {};
		\node [style=none] (25) at (-4.5, -1.5) {};
		\node [style=WIDEmorph] (26) at (-4.25, -0.25) {$\putt{}$};
		\node [style=none] (27) at (-4.25, 1.25) {};
		\node [style=none] (28) at (-4.25, 0.75) {};
		\node [style=none] (29) at (-3.75, -0.25) {};
		\node [style=none] (30) at (-2.25, -1.5) {};
		\node [style=none] (31) at (-4.25, 2) {};
		\node [style=WIDEmorph] (32) at (-0.25, 0) {$T$};
		\node [style=none] (33) at (-0.75, 0.25) {};
		\node [style=none] (34) at (-0.75, -0.25) {};
		\node [style=none] (35) at (0, 0) {};
		\node [style=none] (36) at (0.25, 0) {};
		\node [style=none] (37) at (0, -1.25) {};
		\node [style=none] (38) at (0.5, -1.25) {};
		\node [style=none] (39) at (-0.75, -1.25) {};
		\node [style=morph] (40) at (-0.75, 1) {$g$};
		\node [style=none] (41) at (-0.75, 1.75) {};
		\node [style=none] (42) at (1, 0) {$=$};
		\node [style=none] (44) at (2, -0.5) {};
		\node [style={white_dot}] (46) at (2.75, 0.5) {};
		\node [style={white_dot}] (47) at (3.25, 0.5) {};
		\node [style=none] (48) at (2.75, -0.5) {};
		\node [style=none] (49) at (3.25, -0.5) {};
		\node [style=morph] (51) at (2, 0.25) {$g$};
		\node [style=none] (52) at (2, 1) {};
	\end{pgfonlayer}
	\begin{pgfonlayer}{edgelayer}
		\draw (2.center) to (8.center);
		\draw (3.center) to (5.center);
		\draw (4.center) to (6.center);
		\draw [in=90, out=-90] (13.center) to (15.center);
		\draw (10.center) to (16.center);
		\draw [in=-90, out=90, looseness=1.25] (1.center) to (11.center);
		\draw [in=90, out=-90] (20.center) to (25.center);
		\draw [in=90, out=-90] (21.center) to (23.center);
		\draw [in=90, out=-90] (22.center) to (24.center);
		\draw [in=90, out=-90] (29.center) to (30.center);
		\draw (27.center) to (31.center);
		\draw [in=-90, out=90, looseness=1.25] (19.center) to (28.center);
		\draw (34.center) to (39.center);
		\draw (35.center) to (37.center);
		\draw [in=90, out=-90] (36.center) to (38.center);
		\draw [in=-90, out=90, looseness=1.25] (33.center) to (40);
		\draw (41.center) to (40);
		\draw (46) to (48.center);
		\draw (47) to (49.center);
		\draw [in=-90, out=90, looseness=1.25] (44.center) to (51);
		\draw (52.center) to (51);
	\end{pgfonlayer}
\end{tikzpicture}\]
\end{defn}
Only the former condition is actually required; the latter is entailed by the first. We prove this in both algebraic and graphical notation. First graphically:
\[\begin{tikzpicture}[scale=0.7, {every node/.style}={scale=0.7}]
	\begin{pgfonlayer}{nodelayer}
		\node [style=WIDEmorph] (32) at (-0.25, 0) {$T$};
		\node [style=none] (33) at (-0.75, 0.25) {};
		\node [style=none] (34) at (-0.75, -0.25) {};
		\node [style=none] (35) at (0, 0) {};
		\node [style=none] (36) at (0.25, 0) {};
		\node [style=none] (37) at (0, -1.25) {};
		\node [style=none] (38) at (0.5, -1.25) {};
		\node [style=none] (39) at (-0.75, -1.25) {};
		\node [style=morph] (40) at (-0.75, 1) {$g$};
		\node [style=none] (41) at (-0.75, 1.75) {};
		\node [style=none] (42) at (1, 0) {$=$};
		\node [style=none] (44) at (17, -0.75) {};
		\node [style={white_dot}] (46) at (17.75, 0.25) {};
		\node [style={white_dot}] (47) at (18.25, 0.25) {};
		\node [style=none] (48) at (17.75, -0.75) {};
		\node [style=none] (49) at (18.25, -0.75) {};
		\node [style=morph] (51) at (17, 0) {$g$};
		\node [style=none] (52) at (17, 0.75) {};
		\node [style=WIDEmorph] (53) at (2.75, 1.25) {$T$};
		\node [style=none] (54) at (2.25, 1.5) {};
		\node [style=none] (55) at (2.25, 1) {};
		\node [style=none] (56) at (3, 1.25) {};
		\node [style=none] (57) at (3.25, 1.25) {};
		\node [style=none] (58) at (3.75, -3.25) {};
		\node [style=none] (59) at (4.25, -3.25) {};
		\node [style=none] (60) at (2, -0.5) {};
		\node [style=morph] (61) at (2.25, 2.25) {$g$};
		\node [style=none] (62) at (2.25, 3) {};
		\node [style=none] (63) at (4.75, 0) {$=$};
		\node [style=none] (64) at (2, -1) {};
		\node [style=WIDEmorph] (65) at (2.5, -0.75) {$\putt{}$};
		\node [style=none] (66) at (3, -0.75) {};
		\node [style=morph] (67) at (3, -1.75) {$g$};
		\node [style={white_dot}] (68) at (2.5, -2.5) {};
		\node [style=none] (69) at (2.5, -3.25) {};
		\node [style=WIDEmorph] (70) at (6.25, -0.75) {$T$};
		\node [style=none] (71) at (6, 1.5) {};
		\node [style=none] (72) at (6, 1) {};
		\node [style=none] (73) at (6.5, -0.75) {};
		\node [style=none] (74) at (6.75, -0.75) {};
		\node [style=none] (75) at (7.5, -3.25) {};
		\node [style=none] (76) at (8, -3.25) {};
		\node [style=none] (77) at (5.75, -0.5) {};
		\node [style=morph] (78) at (6, 2.25) {$g$};
		\node [style=none] (79) at (6, 3) {};
		\node [style=none] (80) at (5.75, -1) {};
		\node [style=WIDEmorph] (81) at (6.5, 1.25) {$\putt{}$};
		\node [style=none] (82) at (7, 1.25) {};
		\node [style=morph] (83) at (7.5, -0.75) {$g$};
		\node [style={white_dot}] (84) at (6.25, -2.5) {};
		\node [style=none] (85) at (6.25, -3.25) {};
		\node [style=none] (86) at (8.5, 0) {$=$};
		\node [style=WIDEmorph] (87) at (10, -0.75) {$T$};
		\node [style=none] (90) at (10.25, -0.75) {};
		\node [style=none] (91) at (10.5, -0.75) {};
		\node [style=none] (92) at (11.25, -3.25) {};
		\node [style=none] (93) at (11.75, -3.25) {};
		\node [style=none] (94) at (9.5, -0.5) {};
		\node [style=none] (97) at (9.5, -1) {};
		\node [style=none] (99) at (10, 2.75) {};
		\node [style=morph] (100) at (11.25, -0.75) {$g$};
		\node [style={white_dot}] (101) at (10, -2.5) {};
		\node [style=none] (102) at (10, -3.25) {};
		\node [style={white_dot}] (103) at (9.5, 1) {};
		\node [style=none] (104) at (12.25, 0) {$=$};
		\node [style={white_dot}] (106) at (13.75, -0.25) {};
		\node [style={white_dot}] (107) at (14.25, -0.25) {};
		\node [style=none] (108) at (15, -3.25) {};
		\node [style=none] (109) at (15.5, -3.25) {};
		\node [style={white_dot}] (111) at (13.25, -0.75) {};
		\node [style=none] (112) at (13.75, 2.75) {};
		\node [style=morph] (113) at (15, -0.75) {$g$};
		\node [style={white_dot}] (114) at (13.75, -2.5) {};
		\node [style=none] (115) at (13.75, -3.25) {};
		\node [style=none] (116) at (16, 0) {$=$};
	\end{pgfonlayer}
	\begin{pgfonlayer}{edgelayer}
		\draw (34.center) to (39.center);
		\draw (35.center) to (37.center);
		\draw [in=90, out=-90] (36.center) to (38.center);
		\draw [in=-90, out=90, looseness=1.25] (33.center) to (40);
		\draw (41.center) to (40);
		\draw (46) to (48.center);
		\draw (47) to (49.center);
		\draw [in=-90, out=90, looseness=1.25] (44.center) to (51);
		\draw (52.center) to (51);
		\draw (55.center) to (60.center);
		\draw [in=90, out=-90, looseness=0.75] (56.center) to (58.center);
		\draw [in=90, out=-90] (57.center) to (59.center);
		\draw [in=-90, out=90, looseness=1.25] (54.center) to (61);
		\draw (62.center) to (61);
		\draw (67) to (66.center);
		\draw [bend left=45] (67) to (68);
		\draw [in=-90, out=135] (68) to (64.center);
		\draw (68) to (69.center);
		\draw (72.center) to (77.center);
		\draw [in=90, out=-90] (73.center) to (75.center);
		\draw [in=90, out=-90] (74.center) to (76.center);
		\draw [in=-90, out=90, looseness=1.25] (71.center) to (78);
		\draw (79.center) to (78);
		\draw [in=-90, out=90] (83) to (82.center);
		\draw [bend left=45] (83) to (84);
		\draw [in=-90, out=135] (84) to (80.center);
		\draw (84) to (85.center);
		\draw [in=90, out=-90] (90.center) to (92.center);
		\draw [in=90, out=-90] (91.center) to (93.center);
		\draw [in=-90, out=90] (100) to (99.center);
		\draw [bend left=45] (100) to (101);
		\draw [in=-90, out=135] (101) to (97.center);
		\draw (101) to (102.center);
		\draw (103) to (94.center);
		\draw [in=90, out=-90] (106) to (108.center);
		\draw [in=90, out=-90] (107) to (109.center);
		\draw [in=-90, out=90] (113) to (112.center);
		\draw [bend left=45] (113) to (114);
		\draw [in=-90, out=135] (114) to (111);
		\draw (114) to (115.center);
	\end{pgfonlayer}
\end{tikzpicture}\]
then algebraically:
\begin{align*}
    g(T(s,v_1,v_2)) & = g(T(\putt{}(s,g(s)),v_1,v_2)) \\
    & = g(\putt{}(T(s,v_1,v_2),g(s))) \\
    & = g(s).
\end{align*}
Every tagged lens $\phi := (\putt{},g,T)$ induces a lens $L(\phi)$ which takes two copies of the view, uses one of them to update via the vwb lens, and uses the other to trigger the tag corresponding to the transition between the old and new view values. This updating procedure is captured precisely by the following definition.
\begin{defn}[Induced Lens]
For every tagged lens $(\putt{},g,T)$ the induced lens \[(P,g):=L((\putt{},g,T))\] is defined by \[\begin{tikzpicture}[scale=0.7, {every node/.style}={scale=0.7}]
	\begin{pgfonlayer}{nodelayer}
		\node [style=none] (1) at (-2.75, -0.75) {};
		\node [style=WIDEmorph] (9) at (-2.25, 0.25) {$P$};
		\node [style=none] (10) at (-2.75, 0.5) {};
		\node [style=none] (11) at (-2.75, 0) {};
		\node [style=none] (13) at (-1.75, 0.25) {};
		\node [style=none] (15) at (-1.75, -0.75) {};
		\node [style=none] (16) at (-2.75, 1.25) {};
		\node [style=none] (17) at (-0.75, 0) {$=$};
		\node [style=WIDEmorph] (18) at (0.75, 0.75) {$T$};
		\node [style=none] (21) at (1, 0.75) {};
		\node [style=none] (22) at (1.25, 0.75) {};
		\node [style=morph] (23) at (1, -0.5) {$g$};
		\node [style=WIDEmorph] (26) at (1.5, 2) {$\putt{}$};
		\node [style=none] (27) at (0.25, 1) {};
		\node [style=none] (28) at (0.25, 0.5) {};
		\node [style={white_dot}] (30) at (2.25, -0.75) {};
		\node [style=none] (31) at (1, 1.75) {};
		\node [style=none] (53) at (2, 2) {};
		\node [style=none] (54) at (2.25, -2.25) {};
		\node [style={white_dot}] (55) at (0.5, -1.25) {};
		\node [style=none] (56) at (0.5, -2.25) {};
		\node [style=none] (57) at (1, 3) {};
		\node [style=none] (58) at (1, 2.25) {};
	\end{pgfonlayer}
	\begin{pgfonlayer}{edgelayer}
		\draw [in=90, out=-90] (13.center) to (15.center);
		\draw (10.center) to (16.center);
		\draw [in=-90, out=90, looseness=1.25] (1.center) to (11.center);
		\draw [in=90, out=-90] (21.center) to (23);
		\draw [in=270, out=90] (27.center) to (31.center);
		\draw [in=30, out=-90, looseness=0.50] (53.center) to (30);
		\draw [in=150, out=-75, looseness=0.75] (22.center) to (30);
		\draw [in=120, out=-90, looseness=0.75] (28.center) to (55);
		\draw (55) to (56.center);
		\draw [in=-90, out=0] (55) to (23);
		\draw (57.center) to (58.center);
		\draw (30) to (54.center);
	\end{pgfonlayer}
\end{tikzpicture}\]
\end{defn}
In algebraic notation this reads \[p(s,v) := \putt{}(T(s,g(s),v),v),\] so the induced put first extracts the old view $g(s)$, feeds it into the past input of $T$, copies the new view value $v$ into the future input of $T$, and then applies the underlying vwb update. The running examples above admit tagged presentations.
\begin{example}[Tagged PutCountChanges]
The lens $(\texttt{PutCountChanges},g)$ is induced by the tagged vwb lens with
\[
\putt{}((i,v),v')=(i,v'),
\qquad
\texttt{CountChanges}((i,v),v',v'')=
\begin{cases}
(i+1,v) & v' \neq v'',\\
(i,v) & v' = v''.
\end{cases}
\]
\end{example}
\begin{example}[Tagged PutFlag]
The lens $(\texttt{PutFlag},g)$ is induced by
\[
\putt{}((b,v),v')=(b,v'),
\qquad
\texttt{Flag}((b,v),v',v'')=(1,v).
\]
\end{example}
\begin{example}[Tagged PutScaled]
The lens $(\texttt{PutScaled},g)$ is induced by
\[
\putt{}((x,y),v)=(v,y),
\qquad
\texttt{Scale}((x,y),v,v')=(x,v'y/v).
\]
\end{example}
Note that these induced lenses all satisfy \textbf{putget}, as a direct consequence of the tagged lens axioms.
\begin{prop}
Every lens induced by a tagged lens satisfies $PutGet$
\end{prop}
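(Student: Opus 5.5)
The plan is a direct computation from the definition of the induced lens. We need $g(p(s,v)) = v$, where $p(s,v) := \putt{}(T(s,g(s),v),v)$. The key observation is that the outermost operation in the induced put is the underlying vwb update $\putt{}$. Hence \textbf{putget} for the vwb-lens $(\putt{},g)$ applies immediately, whatever state the tag $T$ has produced. Algebraically the argument is the single chain
\begin{align*}
    g(p(s,v)) & = g(\putt{}(T(s,g(s),v),v)) \\
    & = v,
\end{align*}
using \textbf{putget} for $(\putt{},g)$ with the state $T(s,g(s),v)$ in place of $s$. No property of $T$ is needed. In particular, the commutation condition of the tagged lens definition and its consequence $g\circ T = g$ (proved above) are not invoked.

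Graphically I would proceed the same way. In the string diagram for $P$ followed by $g$, the output of $\putt{}$ is fed directly into $g$. The rest of the diagram, namely the copy of $v$, the copy of the old state feeding $g$, and the box $T$, only prepares the system input of $\putt{}$. Applying the \textbf{putget} diagram equation to the $\putt{}$--$g$ pair replaces them with an identity wire on the view input $v$. What remains is the rest of the diagram with the system output discarded. The final step is to remove this leftover. Every morphism is a comonoid homomorphism and so commutes with deletion, which lets us push the deletion dot back through $T$ and $g$. We then use the counit law to remove the copy nodes on $s$ and on $v$. The result is deletion on $S$ in parallel with the identity on $V$, which is exactly the right-hand side of \textbf{putget}.

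The only step needing care is this last cleanup in the graphical proof. One must check that each copy node, after one branch is deleted, collapses to a plain wire by the counit law. The copy of $v$ collapses because one branch went into the now-deleted $T$. The copy of $s$ collapses because both of its branches end up deleted, which by the compatibility of copying with deletion gives a single deletion on $S$. Algebraically this bookkeeping is invisible, so I would present the one-line algebraic proof first and the diagrammatic rewriting second, in keeping with the paper's style.
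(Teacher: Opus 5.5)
Your proposal is correct and follows essentially the same route as the paper's graphical proof. Both expand $P$ and apply \textbf{putget} of the underlying vwb-lens to the $\putt{}$--$g$ pair. Both then push the resulting deletion back through $T$ and $g$ and finish with the counit laws, and neither uses any tag axiom.
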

\begin{proof}
\[\begin{tikzpicture}[scale=0.7, {every node/.style}={scale=0.7}]
	\begin{pgfonlayer}{nodelayer}
		\node [style=none] (1) at (-8.25, -0.75) {};
		\node [style=WIDEmorph] (9) at (-7.75, 0.25) {$P$};
		\node [style=none] (10) at (-8.25, 0.5) {};
		\node [style=none] (11) at (-8.25, 0) {};
		\node [style=none] (13) at (-7.25, 0.25) {};
		\node [style=none] (15) at (-7.25, -0.75) {};
		\node [style=morph] (16) at (-8.25, 1.25) {$g$};
		\node [style=none] (17) at (-6.25, 0) {$=$};
		\node [style=WIDEmorph] (18) at (-4.75, 0.25) {$T$};
		\node [style=none] (21) at (-4.5, 0.25) {};
		\node [style=none] (22) at (-4.25, 0.25) {};
		\node [style=morph] (23) at (-4.5, -1) {$g$};
		\node [style=WIDEmorph] (26) at (-4, 1.5) {$\putt{}$};
		\node [style=none] (27) at (-5.25, 0.5) {};
		\node [style=none] (28) at (-5.25, 0) {};
		\node [style={white_dot}] (30) at (-3.25, -1.25) {};
		\node [style=none] (31) at (-4.5, 1.25) {};
		\node [style=none] (53) at (-3.5, 1.5) {};
		\node [style=none] (54) at (-3.25, -2.25) {};
		\node [style={white_dot}] (55) at (-5, -1.75) {};
		\node [style=none] (56) at (-5, -2.5) {};
		\node [style=none] (59) at (-8.25, 2) {};
		\node [style=none] (60) at (-4.5, 1.75) {};
		\node [style=morph] (61) at (-4.5, 2.5) {$g$};
		\node [style=none] (62) at (-4.5, 3.25) {};
		\node [style=none] (79) at (-2.25, 0) {$=$};
		\node [style=WIDEmorph] (80) at (-0.75, 0.75) {$T$};
		\node [style=none] (81) at (-0.5, 0.75) {};
		\node [style=none] (82) at (-0.25, 0.75) {};
		\node [style=morph] (83) at (-0.5, -0.5) {$g$};
		\node [style=none] (86) at (-1.25, 0.5) {};
		\node [style={white_dot}] (87) at (0.75, -0.75) {};
		\node [style=none] (89) at (0.5, 0.75) {};
		\node [style=none] (90) at (0.75, -1.75) {};
		\node [style={white_dot}] (91) at (-1, -1.25) {};
		\node [style=none] (92) at (-1, -2) {};
		\node [style=none] (93) at (-1.25, 1) {};
		\node [style={white_dot}] (94) at (-1.25, 2) {};
		\node [style=none] (95) at (1.5, 0) {$=$};
		\node [style={white_dot}] (97) at (3.25, 1) {};
		\node [style={white_dot}] (98) at (4, 1) {};
		\node [style=morph] (99) at (3.25, 0) {$g$};
		\node [style={white_dot}] (100) at (2.5, 1) {};
		\node [style={white_dot}] (101) at (4.5, -0.25) {};
		\node [style=none] (102) at (5, 1) {};
		\node [style=none] (103) at (4.5, -1.25) {};
		\node [style={white_dot}] (104) at (2.75, -0.75) {};
		\node [style=none] (105) at (2.75, -1.5) {};
		\node [style=none] (106) at (6, 0) {$=$};
		\node [style=none] (112) at (7.75, 0.5) {};
		\node [style=none] (113) at (7.75, -0.75) {};
		\node [style={white_dot}] (114) at (7, 0.5) {};
		\node [style=none] (115) at (7, -0.75) {};
	\end{pgfonlayer}
	\begin{pgfonlayer}{edgelayer}
		\draw [in=90, out=-90] (13.center) to (15.center);
		\draw (10.center) to (16);
		\draw [in=-90, out=90, looseness=1.25] (1.center) to (11.center);
		\draw [in=90, out=-90] (21.center) to (23);
		\draw [in=270, out=90] (27.center) to (31.center);
		\draw [in=30, out=-90, looseness=0.50] (53.center) to (30);
		\draw [in=150, out=-90] (22.center) to (30);
		\draw [in=135, out=-90, looseness=0.75] (28.center) to (55);
		\draw (55) to (56.center);
		\draw [in=-90, out=0] (55) to (23);
		\draw (30) to (54.center);
		\draw (59.center) to (16);
		\draw (60.center) to (61);
		\draw (62.center) to (61);
		\draw [in=90, out=-90] (81.center) to (83);
		\draw [in=90, out=-90, looseness=1.50] (89.center) to (87);
		\draw [in=165, out=-90, looseness=0.75] (82.center) to (87);
		\draw [in=135, out=-90, looseness=0.75] (86.center) to (91);
		\draw (91) to (92.center);
		\draw [in=-90, out=0] (91) to (83);
		\draw (87) to (90.center);
		\draw [in=270, out=90] (93.center) to (94);
		\draw [in=90, out=-90] (97) to (99);
		\draw [in=0, out=-90] (102.center) to (101);
		\draw [in=165, out=-90, looseness=0.75] (98) to (101);
		\draw [in=135, out=-90, looseness=0.75] (100) to (104);
		\draw (104) to (105.center);
		\draw [in=-90, out=0] (104) to (99);
		\draw (101) to (103.center);
		\draw (114) to (115.center);
		\draw (113.center) to (112.center);
	\end{pgfonlayer}
\end{tikzpicture}\]
\end{proof}
The remaining lens laws will be consequences of additional design laws for tags.
%\begin{prop}[TGetPut]
%The lens induced by a tagged lens satisfies an equation on GetPut involving the tag, Expressing that retrieving and reinserting applies only a tag.
%\end{prop}
%\begin{proof}
%\[\tikzfig{tagfigs/taggp}\]
%\end{proof}
%In general one cannot expect any interesting Putput for for a lens induced by a tagged lens, however there is a subcategory of induced lenses which do satisfy PutPut laws, those induced by ``future-depending tags" which we introduce in section 2.3. 

\section{The Symmetric Monoidal Category of Tagged Lenses}
Our second step towards demonstrating a compositional entailment between tag laws and lens laws is to establish the relevant notion of compositionality: tagged lenses can be composed, and the induced lens construction preserves this composition. There are in principle many ways to compose tags; we choose the sequential composition for which
\[
L((\putt{}_2,g_2,T_2) \circ (\putt{}_1,g_1,T_1)) = L((\putt{}_2,g_2,T_2)) \circ L((\putt{}_1,g_1,T_1)).
\]
Categorically, this says that $L$ is functorial.
\begin{prop}[The Category $\cat{TL}(\mathcal{C})$]
For every cartesian monoidal category $\mathcal{C}$ a symmetric monoidal category of tagged lenses $\cat{TL}(\mathcal{C})$ can be constructed with
\begin{itemize}
    \item Objects given by the objects of $\mathcal{C}$
    \item Morphisms given by \[\mathbf{TL}[\mathcal{C}](S,S') := \{ \textrm{ Tagged lenses from $S$ to $S'$ }\}\]
\end{itemize}
along with the sequential composition of morphisms 
\begin{align}
    (\putt{}_1,T_1,g_1) & : V \rightarrow V' \\
    (\putt{}_2,T_2,g_2) & : V' \rightarrow V'' \\
(\putt{},T,g) = (\putt{}_2,T_2,g_2) \circ (\putt{}_1,T_1,g_1) & : V \rightarrow V'' 
\end{align}
defined by: \[\begin{tikzpicture}[scale=0.7, {every node/.style}={scale=0.7}]
	\begin{pgfonlayer}{nodelayer}
		\node [style=none] (1) at (-2.25, -0.5) {};
		\node [style=WIDEmorph] (9) at (-1.75, 0.5) {$T$};
		\node [style=none] (10) at (-2.25, 0.75) {};
		\node [style=none] (11) at (-2.25, 0.25) {};
		\node [style=none] (13) at (-1.25, 0.5) {};
		\node [style=none] (15) at (-1.25, -0.5) {};
		\node [style=none] (16) at (-2.25, 1.5) {};
		\node [style=none] (17) at (0, 0.25) {$=$};
		\node [style=WIDEmorph] (18) at (4, 4.5) {$T_1$};
		\node [style=none] (21) at (3.5, 4.25) {};
		\node [style=morph] (23) at (3.5, 3.5) {$g_2$};
		\node [style=WIDEmorph] (26) at (3, 5.75) {$\putt{}_2$};
		\node [style=none] (28) at (2.5, 5.5) {};
		\node [style=none] (53) at (3.5, 5.75) {};
		\node [style={white_dot}] (55) at (2.75, 2.75) {};
		\node [style=none] (56) at (1.25, 1.25) {};
		\node [style=none] (60) at (2.5, 6) {};
		\node [style=none] (61) at (2.5, 6.75) {};
		\node [style=none] (62) at (3.5, 4.75) {};
		\node [style=none] (63) at (4.25, 4.5) {};
		\node [style=none] (72) at (-1.5, 0.5) {};
		\node [style=none] (73) at (-1.5, -0.5) {};
		\node [style=none] (75) at (4.5, 4.5) {};
		\node [style=WIDEmorph] (76) at (1.75, 1.25) {$T_2$};
		\node [style=none] (77) at (1.25, 1) {};
		\node [style=none] (82) at (2, 1.25) {};
		\node [style=none] (83) at (3.25, 0.25) {};
		\node [style=none] (84) at (2.25, 1.25) {};
		\node [style=WIDEmorph] (85) at (3.75, 0.25) {$\putt{}_1$};
		\node [style=none] (86) at (3.25, 0) {};
		\node [style=none] (87) at (4.25, 0.25) {};
		\node [style=none] (89) at (3, -1) {};
		\node [style=WIDEmorph] (90) at (3.5, -1) {$T_1$};
		\node [style=none] (91) at (3, -1.25) {};
		\node [style=none] (94) at (3.75, -1) {};
		\node [style=none] (95) at (4, -1) {};
		\node [style=morph] (96) at (2, -2.25) {$g_2$};
		\node [style=morph] (97) at (2.75, -5) {$g_2$};
		\node [style=none] (98) at (2.75, -3.75) {};
		\node [style=WIDEmorph] (99) at (3.25, -3.75) {$\putt{}_1$};
		\node [style=none] (100) at (2.75, -4) {};
		\node [style=none] (101) at (3.75, -3.75) {};
		\node [style={white_dot}] (102) at (1.5, -4.25) {};
		\node [style={white_dot}] (103) at (2, -5.75) {};
		\node [style={white_dot}] (104) at (4.5, -5) {};
		\node [style={white_dot}] (105) at (6, -5) {};
		\node [style=none] (106) at (2, -6.75) {};
		\node [style=none] (107) at (4.5, -6.5) {};
		\node [style=none] (108) at (6, -6.5) {};
		\node [style=morph] (130) at (-8.25, -4) {$g$};
		\node [style=none] (131) at (-8.25, -3.25) {};
		\node [style=none] (132) at (-8.25, -4.75) {};
		\node [style=none] (133) at (-7.25, -4) {$=$};
		\node [style=morph] (134) at (-6.25, -3.5) {$g_1$};
		\node [style=none] (135) at (-6.25, -2.75) {};
		\node [style=morph] (136) at (-6.25, -4.5) {$g_2$};
		\node [style=none] (137) at (-6.25, -5.5) {};
		\node [style=none] (138) at (-9.75, 2) {};
		\node [style=WIDEmorph] (139) at (-9.25, 3) {$\putt{}$};
		\node [style=none] (140) at (-9.75, 3.25) {};
		\node [style=none] (141) at (-9.75, 2.75) {};
		\node [style=none] (142) at (-8.75, 3) {};
		\node [style=none] (143) at (-8.75, 2) {};
		\node [style=none] (144) at (-9.75, 4) {};
		\node [style=none] (145) at (-7.75, 2.75) {$=$};
		\node [style=WIDEmorph] (146) at (-5.25, 3) {$\putt{}_1$};
		\node [style=none] (147) at (-5.75, 2.75) {};
		\node [style=morph] (148) at (-5.75, 2) {$g_2$};
		\node [style=WIDEmorph] (149) at (-6.25, 4.25) {$\putt{}_2$};
		\node [style=none] (150) at (-6.75, 4) {};
		\node [style=none] (151) at (-5.75, 4.25) {};
		\node [style=none] (152) at (-4.75, 1.25) {};
		\node [style={white_dot}] (153) at (-6.5, 1.25) {};
		\node [style=none] (154) at (-6.5, 0.5) {};
		\node [style=none] (155) at (-6.75, 4.5) {};
		\node [style=none] (156) at (-6.75, 5.25) {};
		\node [style=none] (157) at (-5.75, 3.25) {};
		\node [style=none] (158) at (-4.75, 3) {};
		\node [style=none] (159) at (-3.25, 7.25) {};
		\node [style=none] (160) at (-11.25, 7.25) {};
		\node [style=none] (161) at (-11.25, 1.5) {};
		\node [style=none] (162) at (-3.25, 1.5) {};
		\node [style=none] (163) at (-3.25, -0.75) {};
		\node [style=none] (164) at (-11.25, -0.75) {};
		\node [style=none] (165) at (-11.25, -7.25) {};
		\node [style=none] (166) at (-3.25, -7.25) {};
		\node [style=none] (167) at (7.5, -7.25) {};
		\node [style=none] (168) at (7.25, 7.25) {};
	\end{pgfonlayer}
	\begin{pgfonlayer}{edgelayer}
		\draw [in=90, out=-90] (13.center) to (15.center);
		\draw (10.center) to (16.center);
		\draw [in=-90, out=90, looseness=1.25] (1.center) to (11.center);
		\draw [in=90, out=-90] (21.center) to (23);
		\draw [in=135, out=-90, looseness=0.75] (28.center) to (55);
		\draw [in=90, out=-90] (55) to (56.center);
		\draw [in=-90, out=0] (55) to (23);
		\draw (60.center) to (61.center);
		\draw [in=90, out=-90, looseness=1.25] (53.center) to (62.center);
		\draw [in=90, out=-90] (72.center) to (73.center);
		\draw [in=90, out=-90] (84.center) to (83.center);
		\draw [in=90, out=-90, looseness=1.25] (86.center) to (89.center);
		\draw [style=new edge style 2, in=30, out=-90, looseness=1.25] (63.center) to (104);
		\draw [style=new edge style 2, in=105, out=-75] (94.center) to (104);
		\draw [style=new edge style 2, in=135, out=-60] (101.center) to (104);
		\draw [style=new edge style 2] (104) to (107.center);
		\draw [style=new edge style 5] (105) to (108.center);
		\draw [in=135, out=-90, looseness=0.75] (77.center) to (102);
		\draw [in=-90, out=15] (102) to (96);
		\draw [in=90, out=-90, looseness=1.25] (91.center) to (96);
		\draw [in=90, out=-90] (82.center) to (98.center);
		\draw [in=150, out=-90] (102) to (103);
		\draw [in=-90, out=15, looseness=1.25] (103) to (97);
		\draw [in=270, out=90] (97) to (100.center);
		\draw (103) to (106.center);
		\draw [style=new edge style 5, in=135, out=-90, looseness=0.75] (95.center) to (105);
		\draw [style=new edge style 5, in=285, out=60] (105) to (75.center);
		\draw [style=new edge style 5, in=90, out=-90] (87.center) to (105);
		\draw (131.center) to (130);
		\draw (132.center) to (130);
		\draw (135.center) to (134);
		\draw (134) to (136);
		\draw (137.center) to (136);
		\draw [in=90, out=-90] (142.center) to (143.center);
		\draw (140.center) to (144.center);
		\draw [in=-90, out=90, looseness=1.25] (138.center) to (141.center);
		\draw [in=90, out=-90] (147.center) to (148);
		\draw [in=135, out=-90, looseness=0.75] (150.center) to (153);
		\draw (153) to (154.center);
		\draw [in=-90, out=0] (153) to (148);
		\draw (155.center) to (156.center);
		\draw [in=90, out=-90, looseness=1.25] (151.center) to (157.center);
		\draw [in=90, out=-90] (158.center) to (152.center);
		\draw [style=double edge] (168.center) to (167.center);
		\draw [style=double edge] (167.center) to (166.center);
		\draw [style=double edge] (166.center) to (163.center);
		\draw [style=double edge] (163.center) to (162.center);
		\draw [style=double edge] (162.center) to (159.center);
		\draw [style=double edge] (159.center) to (168.center);
		\draw [style=double edge] (159.center) to (160.center);
		\draw [style=double edge] (160.center) to (161.center);
		\draw [style=double edge] (164.center) to (163.center);
		\draw [style=double edge] (161.center) to (164.center);
		\draw [style=double edge] (164.center) to (165.center);
		\draw [style=double edge] (165.center) to (166.center);
	\end{pgfonlayer}
\end{tikzpicture}\]
and the parallel composition of morphisms
\begin{align}
    (\putt{}_1,T_1,g_1) & : V \rightarrow V' \\
    (\putt{}_2,T_2,g_2) & : W \rightarrow W' \\
(\putt{},T,g) = (\putt{}_2,T_2,g_2) \otimes (\putt{}_1,T_1,g_1) & : V \times W \rightarrow V' \times W' 
\end{align}
defined by: \[\begin{tikzpicture}[scale=0.7, {every node/.style}={scale=0.7}]
	\begin{pgfonlayer}{nodelayer}
		\node [style=none] (1) at (0.75, 2.75) {};
		\node [style=WIDEmorph] (9) at (1.25, 3.75) {$T$};
		\node [style=none] (10) at (0.75, 4) {};
		\node [style=none] (11) at (0.75, 3.5) {};
		\node [style=none] (13) at (1.75, 3.75) {};
		\node [style=none] (15) at (1.75, 2.75) {};
		\node [style=none] (16) at (0.75, 4.75) {};
		\node [style=none] (72) at (1.5, 3.75) {};
		\node [style=none] (73) at (1.5, 2.75) {};
		\node [style=morph] (130) at (-4, 3.75) {$g$};
		\node [style=none] (131) at (-4, 4.5) {};
		\node [style=none] (132) at (-4, 3) {};
		\node [style=none] (160) at (-12, 5.75) {};
		\node [style=none] (161) at (-12, 1.75) {};
		\node [style=none] (184) at (-2, 4.5) {};
		\node [style=morph] (185) at (-2, 3.75) {$g_1$};
		\node [style=none] (186) at (-2, 3) {};
		\node [style=none] (187) at (-11.25, 2.75) {};
		\node [style=WIDEmorph] (188) at (-10.75, 3.75) {$\putt{}$};
		\node [style=none] (189) at (-11.25, 4) {};
		\node [style=none] (190) at (-11.25, 3.5) {};
		\node [style=none] (191) at (-10.25, 3.75) {};
		\node [style=none] (192) at (-10.25, 2.75) {};
		\node [style=none] (193) at (-11.25, 4.75) {};
		\node [style=none] (194) at (-9.25, 3.5) {$=$};
		\node [style=none] (195) at (0, 5.75) {};
		\node [style=none] (196) at (-5, 5.75) {};
		\node [style=none] (197) at (-5, 1.75) {};
		\node [style=none] (198) at (0, 1.75) {};
		\node [style=none] (199) at (-8.5, 2.5) {};
		\node [style=WIDEmorph] (200) at (-8, 4) {$\putt{}_1$};
		\node [style=none] (201) at (-8.5, 4.25) {};
		\node [style=none] (202) at (-8.5, 3.75) {};
		\node [style=none] (203) at (-7.5, 4) {};
		\node [style=none] (204) at (-6.25, 2.5) {};
		\node [style=none] (205) at (-8.5, 5) {};
		\node [style=none] (206) at (-7.5, 2.5) {};
		\node [style=WIDEmorph] (207) at (-6, 4) {$\putt{}_2$};
		\node [style=none] (208) at (-6.5, 4.25) {};
		\node [style=none] (209) at (-6.5, 3.75) {};
		\node [style=none] (210) at (-5.5, 4) {};
		\node [style=none] (211) at (-5.5, 2.5) {};
		\node [style=none] (212) at (-6.5, 5) {};
		\node [style=none] (213) at (3.75, 2.25) {};
		\node [style=WIDEmorph] (214) at (4.25, 4.25) {$T_1$};
		\node [style=none] (215) at (3.75, 4.5) {};
		\node [style=none] (216) at (3.75, 4) {};
		\node [style=none] (217) at (4.75, 4.25) {};
		\node [style=none] (218) at (6.75, 2.25) {};
		\node [style=none] (219) at (3.75, 5.25) {};
		\node [style=none] (220) at (4.5, 4.25) {};
		\node [style=none] (221) at (5.5, 2.25) {};
		\node [style=none] (222) at (4.25, 2.25) {};
		\node [style=WIDEmorph] (223) at (6.25, 4.25) {$T_2$};
		\node [style=none] (224) at (5.75, 4.5) {};
		\node [style=none] (225) at (5.75, 4) {};
		\node [style=none] (226) at (6.75, 4.25) {};
		\node [style=none] (227) at (7, 2.25) {};
		\node [style=none] (228) at (5.75, 5.25) {};
		\node [style=none] (229) at (6.5, 4.25) {};
		\node [style=none] (230) at (5.75, 2.25) {};
		\node [style=none] (231) at (2.75, 3.5) {$=$};
		\node [style=none] (232) at (-3, 3.75) {$=$};
		\node [style=morph] (233) at (-1, 3.75) {$g_2$};
		\node [style=none] (234) at (-1, 4.5) {};
		\node [style=none] (236) at (-1, 3) {};
		\node [style=none] (274) at (7.5, 5.75) {};
		\node [style=none] (276) at (7.5, 1.75) {};
	\end{pgfonlayer}
	\begin{pgfonlayer}{edgelayer}
		\draw [in=90, out=-90] (13.center) to (15.center);
		\draw (10.center) to (16.center);
		\draw [in=-90, out=90, looseness=1.25] (1.center) to (11.center);
		\draw [in=90, out=-90] (72.center) to (73.center);
		\draw (131.center) to (130);
		\draw (132.center) to (130);
		\draw [style=double edge] (160.center) to (161.center);
		\draw (186.center) to (185);
		\draw [in=90, out=-90] (191.center) to (192.center);
		\draw (189.center) to (193.center);
		\draw [in=-90, out=90, looseness=1.25] (187.center) to (190.center);
		\draw [style=double edge] (198.center) to (195.center);
		\draw [style=double edge] (195.center) to (196.center);
		\draw [style=double edge] (196.center) to (197.center);
		\draw [style=double edge] (197.center) to (198.center);
		\draw [in=90, out=-90] (203.center) to (204.center);
		\draw (201.center) to (205.center);
		\draw [in=-90, out=90, looseness=1.25] (199.center) to (202.center);
		\draw [in=90, out=-90] (210.center) to (211.center);
		\draw (208.center) to (212.center);
		\draw [in=-90, out=90, looseness=1.25] (206.center) to (209.center);
		\draw [in=90, out=-90] (217.center) to (218.center);
		\draw (215.center) to (219.center);
		\draw [in=-90, out=90, looseness=1.25] (213.center) to (216.center);
		\draw [in=90, out=-90] (220.center) to (221.center);
		\draw [in=90, out=-90] (226.center) to (227.center);
		\draw (224.center) to (228.center);
		\draw [in=-90, out=90, looseness=1.25] (222.center) to (225.center);
		\draw [in=90, out=-90] (229.center) to (230.center);
		\draw (234.center) to (233);
		\draw (184.center) to (185);
		\draw (233) to (236.center);
		\draw [style=double edge] (160.center) to (196.center);
		\draw [style=double edge] (197.center) to (161.center);
		\draw [style=double edge] (274.center) to (276.center);
		\draw [style=double edge] (274.center) to (195.center);
		\draw [style=double edge] (276.center) to (198.center);
	\end{pgfonlayer}
\end{tikzpicture}\]
\end{prop}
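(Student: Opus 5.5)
The plan is to verify the axioms directly, reducing everything we can to facts already known about $\mathbf{L}[\mathcal{C}]$ and $\mathbf{vwbL}[\mathcal{C}]$. First we read off the composite tag from the diagram. Algebraically, we expect it to have the form
\[
T(s,v,v'') = \putt{}_1\bigl(T_1(s,\,g_1(s),\,x),\,x\bigr)\ \text{with } x = \putt{}_2\bigl(T_2(g_1(s),v,v''),v''\bigr),
\]
suitably rearranged so that the $\putt{}_1$ and $T_1$ inputs match the picture. Here $T_2$ is fed the old inner view $g_1(s)$, and $T_1$ is fed the old and new intermediate views.

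The underlying pair $(\putt{},g)$ is the usual composite of vwb-lenses. It is therefore vwb, since $\mathbf{vwbL}[\mathcal{C}]$ is a subcategory. So the first real task is to check the tag axiom $\putt{}(T(s,v_1,v_2),v_3)=T(\putt{}(s,v_3),v_1,v_2)$. We only need this one, because the second axiom follows from it by the argument given after the definition.

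To check the tag axiom, we expand $\putt{}(-,v_3)=\putt{}_1(-,\putt{}_2(g_1(-),v_3))$. We then use the following facts in turn: $g_1$ is invariant under $T_1$ and under $\putt{}_1$ up to putget, which reduces $g_1(T(s,\dots))$ to an expression in $g_1(s)$ and $\putt{}_2$; $T_2$ commutes with $\putt{}_2$; putput for $\putt{}_1$ merges the two successive updates; and $T_1$ commutes with $\putt{}_1$. We carry this out graphically, sliding boxes past copy dots using the comonoid-homomorphism property of every morphism.

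Next we check that $L$ preserves composition. This amounts to expanding $L(\phi_2)\circ L(\phi_1)$ using the composition in $\mathbf{L}[\mathcal{C}]$ and matching it with $P(s,v)=\putt{}(T(s,g(s),v),v)$. This check essentially determines the composite tag, so it should be a direct diagram rewrite using $g_1\circ T_1=g_1$ and putget.

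For the category laws, we take the identity on $V$ to be $(\pi_1,\mathrm{id},\pi_1)$, that is, put returns the new value and the tag deletes its view inputs. The unit laws then follow by deleting copies. Associativity is the step we expect to be the main obstacle, because the composite tag mixes lens data of both factors. Our first approach is to compare both bracketings as morphisms $S\times V\times V\to S$. We push all $g$'s to act on the original $s$, using $g_i\circ T_i=g_i$ and putget, so that both sides normalise to the same nested form $T_1$–$\putt{}_1$–$T_2$–$\putt{}_2$–$T_3$–$\putt{}_3$ applied to the views $g_1 s$ and $g_2 g_1 s$. If this normal form resists, we fall back on checking componentwise after copying $s$.

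For the monoidal structure, we set $T(s,t,(v,w),(v',w'))=(T_1(s,v,v'),T_2(t,w,w'))$ and take the products of puts and gets. The tag axiom holds componentwise, and functoriality of $\otimes$ (interchange with sequential composition) reduces to the corresponding componentwise equations. The unit object is $I$ with the trivial tag, and the symmetry is the swap lens with the trivial tag. Coherence and strictness are inherited from those of $\mathcal{C}$ and of $\mathbf{L}[\mathcal{C}]$, because all structure maps have trivial tags.
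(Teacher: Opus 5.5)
\paragraph{The composite tag is misread.} The composite tag you read off is not the one in the statement, and it is not a tag at all. Your indices are also swapped relative to the statement: there, $(\putt{}_1,T_1,g_1)\colon V\to V'$ is the inner lens, and the composite put is $\putt{}_2(s,\putt{}_1(g_2 s,v))$. I use the statement's labels below.

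In those labels your formula reads $T(s,p,f)=\putt{}_2\bigl(T_2(s,g_2 s,x),x\bigr)$ with $x=\putt{}_1\bigl(T_1(g_2 s,p,f),f\bigr)$.
\begin{itemize}
\item Applying putget twice gives $g(T(s,p,f))=g_1(x)=f$. So this $T$ overwrites the view, contradicting $g(T(s,v_1,v_2))=g(s)$.
\item Concretely, composing two identity tagged lenses would give the tag $(s,v,v'')\mapsto v''$ instead of $(s,v,v'')\mapsto s$. This violates both the tag axiom and unitality.
\item What you have written is the composite induced put $P_2(s,P_1(g_2 s,-))$ with the prior view made a free parameter. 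It does satisfy $\putt{}(T(s,g(s),v),v)=P_2(s,P_1(g_2 s,v))$. So functoriality of $L$ does not ``essentially determine'' the tag.
\end{itemize}

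The diagram in the statement instead gives
\[
y=T_2\bigl(s,\ \putt{}_1(g_2 s,p),\ \putt{}_1(T_1(g_2 s,p,f),f)\bigr),\qquad T(s,p,f)=\putt{}_2\bigl(y,\ T_1(g_2 y,p,f)\bigr).
\]
This differs from your guess in two essential ways.
\begin{itemize}
\item The last step inserts the tagged but \emph{not yet updated} intermediate value $T_1(g_2 y,p,f)$. The update itself is left to $\putt{}$.
\item The prior intermediate view fed to $T_2$ is $\putt{}_1(g_2 s,p)$, not $g_2 s$. The two agree when $p=g(s)$, by getput. However, only $\putt{}_1(g_2 s,p)$ is unchanged when $s$ is replaced by $\putt{}(s,v)$.
\end{itemize}

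\paragraph{The tag-axiom check is therefore incomplete.} This is why your list of ingredients for the tag axiom falls short. Since $g_2(\putt{}(s,v))=\putt{}_1(g_2 s,v)$, you need putput of the \emph{inner} lens for two steps:
\begin{itemize}
\item $\putt{}_1(\putt{}_1(g_2 s,v),p)=\putt{}_1(g_2 s,p)$;
\item after commuting $T_1$ past $\putt{}_1$, $\putt{}_1\bigl(\putt{}_1(T_1(g_2 s,p,f),v),f\bigr)=\putt{}_1(T_1(g_2 s,p,f),f)$.
\end{itemize}
These show that both view arguments of $T_2$ are unchanged. Only then do the steps you list close the computation: $T_2$ commuting with $\putt{}_2$, putget with $g_2\circ T_2=g_2$, $T_1$ commuting with $\putt{}_1$, and outer putput. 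With $g_2 s$ as the prior argument, the change to $\putt{}_1(g_2 s,v)$ cannot be undone, and the axiom fails.

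\paragraph{Smaller points.}
\begin{itemize}
\item As a tuple $(\putt{},g,T)$ the identity is $(\pi_2,\mathrm{id},\pi_1)$. Your $\pi_1$ for the put contradicts your own verbal description.
\item The right unit law needs getput of the non-identity factor, namely $\putt{}_2(y,g_2 y)=y$, not just deletion.
\item Functoriality of $L$ is a separate proposition, not part of this statement.
\end{itemize}
Your componentwise monoidal structure and your appeal to $\mathbf{vwbL}[\mathcal{C}]$ for the underlying pair do match the paper.
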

\begin{proof}
Given in the appendix
\end{proof}
At this point the graphical calculus becomes more readable, and more manageable, than the corresponding algebraic expressions for composed tags. We give an intuitive description of tag composition in the appendix. The compatibility above can be summarised categorically as follows.
\begin{prop}
The induced lens construction defines a strict monoidal injective-on-objects functor \[\mathcal{L}:\cat{TL}[\mathcal{C}] \longrightarrow \cat{pgL}[\mathcal{C}]\] from the category $\cat{TL}[\mathcal{C}]$ of tagged lenses over $\mathcal{C}$ into the category $\cat{pgL}[\mathcal{C}]$ of putget lenses over $\mathcal{C}$.
\end{prop}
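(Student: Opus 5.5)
We define $\mathcal{L}$ to be the identity on objects and to send a tagged lens $\phi=(\putt{},g,T)$ to its induced lens $L(\phi)=(P,g)$, with $P(s,v)=\putt{}(T(s,g(s),v),v)$. By the earlier proposition, every induced lens satisfies \textbf{putget}, so $\mathcal{L}$ does land in $\cat{pgL}[\mathcal{C}]$. Being the identity on objects, it is trivially injective on objects. What remains is to check functoriality, strict monoidality, and compatibility with the symmetry.

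For identities, the identity tagged lens on $V$ is $(\pi_2,\mathrm{id},\pi_1)$: the put discards the old value, the get is the identity, and the tag is the projection onto $S$. I would check that this is the unit of the composition in the appendix construction. Its induced put is $\pi_1\text{-then-}\pi_2$, i.e.\ $(s,v)\mapsto v$, which is the identity lens. For binary composition, I would unfold the composite tag $T$ from the statement of the $\cat{TL}(\mathcal{C})$ proposition. Algebraically it reads
\[
T(s,v,v')=T_1\big(\putt{}_1(s,\, g_2(T_2(g_1(s),v,v'))),\; g_1(s),\; g_1'\big),
\]
where $g_1'$ denotes the corresponding updated inner view produced in the diagram. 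I would then compute $P(s,v)=\putt{}(T(s,g_2g_1(s),v),v)$ and compare it with $(P_2,g_2)\circ(P_1,g_1)$, that is, with
\[
P_1\Big(s,\; P_2\big(g_1(s),v\big)\Big)=\putt{}_1\Big(T_1\big(s,\,g_1(s),\,\putt{}_2(T_2(g_1s,g_2g_1s,v),v)\big),\;\putt{}_2(T_2(g_1s,g_2g_1s,v),v)\Big).
\]
The matching uses three facts, applied graphically by sliding boxes through copy nodes:
\begin{itemize}
\item commutation of $T_1$ with $\putt{}_1$;
\item the derived law $g_i\circ T_i=g_i\circ\pi_1$;
\item the vwb laws of $(\putt{}_i,g_i)$, in particular putget to simplify $g_1\circ\putt{}_1$, and putput to collapse the double put of $\putt{}_1$.
\end{itemize}
The gets agree immediately, since both are $g_2\circ g_1$.

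For the monoidal structure, both the parallel tag composition and the parallel lens composition act componentwise after the obvious symmetry rewiring of wires. Their induced puts therefore agree once we note that the copy on $V\times W$ splits as the tensor of copies, by the cartesian axiom in the preliminaries. The unit object and the symmetries are taken to be the identity-tagged versions of the lens unit and symmetries, so they are preserved strictly.

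I expect the main obstacle to be the sequential-composition calculation: the composite tag diagram has several copy nodes, and the bookkeeping of which copy of the view feeds which input is delicate. To handle it, I would first rewrite the composed induced put using putget for $\putt{}_1$ to eliminate $g_1$ of a put. I would then use tag--put commutation to move $T_1$ beneath $\putt{}_1$, and finally use putput for $\putt{}_1$ to merge the two updates, so that the result matches the diagram defining $T$ composed with $\putt{}$.
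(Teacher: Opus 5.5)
Your overall plan matches the paper's: identity on objects, putget from the earlier proposition, a direct check that the induced put of a composite tagged lens equals the composite of the induced puts, and a componentwise check for $\otimes$. Your treatment of identities and of the tensor is fine.

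\textbf{The gap.} It is in the sequential step, which is the whole content of functoriality. Your algebraic unfolding of the composite tag is not what the diagram defines, and the ``matching'' is only a list of laws to be applied to that term.
\begin{itemize}
\item In your own indexing, $g_2(T_2(g_1 s,v,v'))$ lives in the innermost view but is plugged into the intermediate-view slot of $\putt{}_1$, so the formula does not typecheck.
\item The future input $g_1'$ is never specified.
\item The prior input of the outer tag is not the outer get of $s$.
\item You have also reversed the paper's indexing. In $(\putt{}_2,g_2,T_2)\circ(\putt{}_1,g_1,T_1)$ the inner lens is $(\putt{}_1,g_1,T_1)\colon V\to V'$, so the target put is $P_2(s,P_1(g_2 s,v))$.
\end{itemize}
Reading the diagram with the green wire as the prior input $p$ and the orange wire as the future input $f$, the composite tag is
\[
T(s,p,f)=\putt{}_2\bigl(t,\;T_1(g_2 t,p,f)\bigr),\qquad t=T_2\bigl(s,\;\putt{}_1(g_2 s,p),\;\putt{}_1(T_1(g_2 s,p,f),f)\bigr).
\]
So $T_2$ is fed the intermediate system twice: after putting the old view $p$, and after the tagged update by $f$. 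By tag--put commutation and $g_2 t=g_2 s$, this can be rewritten as $T_2\bigl(\putt{}_2(s,T_1(g_2 s,p,f)),\ldots\bigr)$. That is the shape you were aiming at (after swapping indices), but with those two view inputs and no extra get.

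\textbf{The fix.} With the correct formula the verification goes through, but it needs a law you do not single out: getput of the inner lens. Take $p=g_1g_2 s$ and $f=v$.
\begin{itemize}
\item Getput for $(\putt{}_1,g_1)$ gives $\putt{}_1(g_2 s,g_1g_2 s)=g_2 s$, so $t=T_2\bigl(s,g_2 s,P_1(g_2 s,v)\bigr)$.
\item The derived law $g_2T_2=g_2\pi_1$ gives $g_2 t=g_2 s$.
\item Putget for $(\putt{}_2,g_2)$ then shows that the composite put feeds $\putt{}_1\bigl(T_1(g_2 s,g_1g_2 s,v),v\bigr)=P_1(g_2 s,v)$ into $\putt{}_2$.
\item Putput for $\putt{}_2$ collapses the two updates to $\putt{}_2\bigl(t,P_1(g_2 s,v)\bigr)=P_2\bigl(s,P_1(g_2 s,v)\bigr)$. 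This is the put of $(P_2,g_2)\circ(P_1,g_1)$.
\end{itemize}
Without the getput step, the prior input of $T_2$ would not match the $g_2(s)$ occurring in $P_2$. Your formula hid this by writing that prior input directly as a get of $s$.

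The paper does the same computation diagrammatically. It additionally uses that $T_2$ is a comonoid homomorphism to merge its copies, which is exactly what justifies term-level substitution in an arbitrary cartesian $\mathcal{C}$.
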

\begin{proof}
The functor is defined on objects by $\mathcal{L}(V) := V$ and on morphisms by producing the lens induced by the tagged lens $\mathcal{L}((\putt{},g,T)) := L((\putt{},g,T))$. The proof that this is indeed a strict monoidal functor is given in the appendix. 
\end{proof}
Since $\mathcal{L}$ is injective on objects, its image is a subcategory of $\cat{pgL}[\mathcal{C}]$, namely the subcategory of putget lenses induced by tagged lenses.

\section{Subclasses of Tagged Lenses}
Our next step towards compositional entailment of lens laws by tag design laws is to define two intuitive compositional design laws for tags, each preserved under sequential and parallel composition:
\begin{itemize}
    \item \textbf{Change-Depending}: A tag which only records a genuine change in the value of the view, trivial updates in which $p=f$ are not recorded by $T$: \[\begin{tikzpicture}[scale=0.7, {every node/.style}={scale=0.7}]
	\begin{pgfonlayer}{nodelayer}
		\node [style=WIDEmorph] (0) at (-1.75, 0.5) {$T$};
		\node [style=none] (1) at (-2.25, 0.75) {};
		\node [style=none] (2) at (-2.25, 0.25) {};
		\node [style=none] (3) at (-1.5, 0.5) {};
		\node [style=none] (4) at (-1.25, 0.5) {};
		\node [style=none] (8) at (-2.25, -1.5) {};
		\node [style=none] (11) at (-2.25, 1.75) {};
		\node [style=none] (17) at (-0.25, 0) {$=$};
		\node [style={white_dot}] (46) at (1.75, 0) {};
		\node [style={white_dot}] (47) at (-1.25, -0.75) {};
		\node [style=none] (48) at (1.75, -0.75) {};
		\node [style=none] (49) at (-1.25, -1.5) {};
		\node [style=none] (50) at (0.75, 0.75) {};
		\node [style=none] (51) at (0.75, -0.75) {};
	\end{pgfonlayer}
	\begin{pgfonlayer}{edgelayer}
		\draw (2.center) to (8.center);
		\draw [in=-90, out=90, looseness=1.25] (1.center) to (11.center);
		\draw (46) to (48.center);
		\draw (47) to (49.center);
		\draw [in=150, out=-75] (3.center) to (47);
		\draw [in=30, out=-90, looseness=0.75] (4.center) to (47);
		\draw (50.center) to (51.center);
	\end{pgfonlayer}
\end{tikzpicture}\] in algebraic language \[T(s,v,v) = id\]
    \item \textbf{First-Last-Depending}: A tag which records only the initial and final values of the view after any sequence of updates, ignoring any intermediate values of the view: \[\begin{tikzpicture}[scale=0.7, {every node/.style}={scale=0.7}]
	\begin{pgfonlayer}{nodelayer}
		\node [style=WIDEmorph] (52) at (-2.5, 0.25) {$T$};
		\node [style=none] (53) at (-3, 0.5) {};
		\node [style=none] (54) at (-3, 0) {};
		\node [style=none] (55) at (-2.25, 0.25) {};
		\node [style=none] (56) at (-2, 0.25) {};
		\node [style=none] (57) at (-3, -1.75) {};
		\node [style=none] (58) at (-2, 1.25) {};
		\node [style=none] (59) at (0, 0) {$=$};
		\node [style={white_dot}] (61) at (-1.5, -1) {};
		\node [style=none] (63) at (-1.5, -1.75) {};
		\node [style=none] (67) at (-2.25, -1.75) {};
		\node [style=WIDEmorph] (68) at (-1.5, 1.5) {$T$};
		\node [style=none] (69) at (-2, 1.75) {};
		\node [style=none] (71) at (-1.25, 1.5) {};
		\node [style=none] (72) at (-1, 1.5) {};
		\node [style=none] (73) at (-2, 2.75) {};
		\node [style=none] (76) at (-0.75, -1.75) {};
		\node [style=WIDEmorph] (77) at (1.5, 1) {$T$};
		\node [style=none] (78) at (1, 1.25) {};
		\node [style=none] (79) at (1, 0.75) {};
		\node [style=none] (80) at (1.75, 1) {};
		\node [style=none] (82) at (1, -1.25) {};
		\node [style=none] (83) at (1, 2) {};
		\node [style={white_dot}] (84) at (2.5, -0.5) {};
		\node [style=none] (85) at (2.5, -1.25) {};
		\node [style=none] (86) at (1.75, -1.25) {};
		\node [style=none] (90) at (2, 1) {};
		\node [style=none] (92) at (3.25, -1.25) {};
	\end{pgfonlayer}
	\begin{pgfonlayer}{edgelayer}
		\draw (54.center) to (57.center);
		\draw [in=270, out=90, looseness=1.25] (53.center) to (58.center);
		\draw (61) to (63.center);
		\draw [in=150, out=-90] (56.center) to (61);
		\draw [in=-90, out=90, looseness=1.25] (69.center) to (73.center);
		\draw [in=270, out=45, looseness=0.75] (61) to (71.center);
		\draw [in=-90, out=90] (76.center) to (72.center);
		\draw [in=270, out=90] (67.center) to (55.center);
		\draw (79.center) to (82.center);
		\draw [in=270, out=90, looseness=1.25] (78.center) to (83.center);
		\draw [in=90, out=-90] (84) to (85.center);
		\draw [in=-90, out=90] (92.center) to (90.center);
		\draw [in=270, out=90] (86.center) to (80.center);
	\end{pgfonlayer}
\end{tikzpicture}\] in algebraic language \[T(T(s,v,v'),v',v'') = T(s,v,v'')\]
\end{itemize}
Both change-depending and first-last-depending tags are preserved by lens composition. We present the proof for change-depending tags in the main text.
\begin{prop}
Change-depending tagged lenses define a symmetric monoidal sub-category $\mathbf{cdepTL}[\mathcal{C}]$ of $\cat{TL}(\mathcal{C})$ 
\end{prop}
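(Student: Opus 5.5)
The plan is to check three things. The identity tagged lenses and the structural tagged lenses (symmetries, unitors) have change-depending tags. Sequential composition preserves change-dependence. Parallel composition preserves change-dependence. Once these hold, $\mathbf{cdepTL}[\mathcal{C}]$ is a subcategory of $\cat{TL}(\mathcal{C})$ that contains the identities and the symmetric monoidal structure and is closed under $\circ$ and $\otimes$. It is then automatically a symmetric monoidal sub-category, because the coherence equations are inherited from $\cat{TL}(\mathcal{C})$, whose structure was established in the appendix.

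For the identities and structural morphisms, I take the tag to be the one used in the appendix construction of $\cat{TL}(\mathcal{C})$. This should be the trivial tag, which deletes its two view inputs, i.e. $T(s,v,v')=s$. It is change-depending for trivial reasons, and the same holds for the symmetry tagged lens.

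For the parallel composite, the defining diagram shows that $T = T_1\times T_2$, acting componentwise with the view inputs split along $V\times W$. Feeding a copied view into both view inputs of $T$ therefore feeds a copied $V$-view into $T_1$ and a copied $W$-view into $T_2$. Using the fact that copying on $V\times W$ is copying on $V$ and on $W$ separately (the first cartesian axiom), the change-dependence of $T_1$ and of $T_2$ reduces each factor to the identity.

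The main step is the sequential composite. Reading the diagram algebraically, the composite tag $T(s,a,b)$ proceeds in three stages. It first applies $T_2$ to the intermediate view of $s$, with prior view $a$ and future view $b$. It then writes the resulting intermediate value back with $\putt{}_1$. Finally it applies $T_1$, with prior view $g_2(s)$ (the intermediate view extracted from the original state) and with future view given by the intermediate view of the updated value, read off through the copies in the diagram. I would substitute $a=b$, i.e. precompose the two view inputs with a copy map, and then proceed in three steps.

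First, change-dependence of $T_2$ collapses $T_2(-,a,a)$ to the identity, and the now-unused copy of $a$ is deleted by the counit law.

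Second, what remains on the $\putt{}_1$ branch is $\putt{}_1$ applied to $s$ and to the intermediate view extracted from $s$ itself. By the \textbf{getput} law of the underlying vwb-lens $(\putt{}_1,g_1)$ this is the identity.

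Third, the two view inputs of $T_1$ are now both obtained from the same wire by the same extraction map. Since every morphism is a comonoid homomorphism, this extraction can be slid below a single copy map, so $T_1$ receives a copied value. Change-dependence of $T_1$ then eliminates it, and the remaining deletions absorb the leftover wires.

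I expect this third step to be the main obstacle. The issue is bookkeeping rather than a conceptual difficulty: the composite diagram routes several copies of $g_2$ and $\putt{}_1$ through $T_1$, and one must track carefully which wire feeds which input. In particular, after the first two simplifications the future input of $T_1$ must actually come from the same state as its prior input, and this may require one further application of \textbf{putget} for $(\putt{}_1,g_1)$ to identify the two. I would do this part graphically, mirroring the proof that $g\circ T$ discards the view inputs, and then restate it algebraically.
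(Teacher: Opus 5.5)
Your overall plan has the right ingredients, and they are the ones the paper uses: identities and $\otimes$ are easy, and for $\circ$ you use change-dependence of both tags, one getput, the comonoid-homomorphism property and deletion. The gap is that the main step is computed on a misreading of the composite tag, so as written it does not concern the tag actually defined in $\cat{TL}(\mathcal{C})$. Take $(\putt{}_1,g_1,T_1):V\to V'$ and $(\putt{}_2,g_2,T_2):V'\to V''$, with $s\in V''$ and $a,b\in V$. The composition diagram then reads
\[
T(s,a,b)=\putt{}_2\bigl(x,\,T_1(g_2(x),a,b)\bigr),\qquad x=T_2\bigl(s,\;\putt{}_1(g_2(s),a),\;\putt{}_1(T_1(g_2(s),a,b),b)\bigr).
\]
This differs from your reading in four ways:
\begin{itemize}
\item $T_2$ acts on $s$ first, and the $T_1$-tagged intermediate value is written back by $\putt{}_2$ only afterwards.
\item The view inputs of $T_2$ lie in $V'$ and are the reconstructed intermediate states $\putt{}_1(g_2(s),a)$ and $\putt{}_1(T_1(g_2(s),a,b),b)$. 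They are not $g_2(s)$ and the intermediate view of an updated state.
\item $T_1$ occurs twice.
\item Your indices are swapped. $T_2(-,a,a)$ is ill-typed, since $a\in V$. The getput you need is that of $(\putt{}_2,g_2)$, since $\putt{}_1$ cannot be applied to $s\in V''$.
\end{itemize}
This is not cosmetic. The prior input $\putt{}_1(g_2(s),a)$ depends on $a$ and equals $g_2(s)$ only when $a=g_1(g_2(s))$, whereas change-dependence has to hold for every $a$.

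The repair uses exactly your tools, in the order of the paper's graphical chain. Put $a=b$. Change-dependence of $T_1$, applied at both occurrences, removes it. Both view inputs of $T_2$ then become $\putt{}_1(g_2(s),a)$, obtained from copies of $(s,a)$ by the same morphism $\putt{}_1\circ(g_2\times\mathrm{id}_V)$. By the comonoid-homomorphism property this is a single value $w$ followed by one copy; this is where the paper merges its two $\putt{}_1$ boxes. Change-dependence of $T_2$ then gives $x=s$, and $w$ is discarded by naturality of deletion and the counit law. What remains is $\putt{}_2(s,g_2(s))$, which is $s$ by getput of $(\putt{}_2,g_2)$. You may equally apply this getput earlier, to $\putt{}_2(x,g_2(x))$. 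No law of $(\putt{}_1,g_1)$ is used, so the extra putget step you anticipated does not arise. Your treatment of identities and of $\otimes$ is fine; the paper only draws the sequential case.
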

\begin{proof}
\[\input{tagfigs/cdeptla.tikz}\]
\[\begin{tikzpicture}[scale=0.5, {every node/.style}={scale=0.5}]
	\begin{pgfonlayer}{nodelayer}
		\node [style=none] (116) at (10.5, 1.75) {$=$};
		\node [style={white_dot}] (117) at (12.5, 1.75) {};
		\node [style=none] (119) at (12.5, 1) {};
		\node [style=none] (121) at (11.5, 2.5) {};
		\node [style=none] (122) at (11.5, 1) {};
		\node [style=none] (239) at (-10.25, 1.75) {$=$};
		\node [style=morph] (240) at (-6.75, 4.5) {$g_2$};
		\node [style=WIDEmorph] (241) at (-7.25, 6.75) {$\putt{}_2$};
		\node [style=none] (242) at (-7.75, 6.5) {};
		\node [style=none] (243) at (-6.75, 6.75) {};
		\node [style={white_dot}] (244) at (-7.5, 3.75) {};
		\node [style=none] (245) at (-9, 2.25) {};
		\node [style=none] (246) at (-7.75, 7) {};
		\node [style=none] (247) at (-7.75, 7.75) {};
		\node [style=WIDEmorph] (248) at (-8.5, 2.25) {$T_2$};
		\node [style=none] (249) at (-9, 2) {};
		\node [style=none] (250) at (-8.25, 2.25) {};
		\node [style=none] (251) at (-8.25, 0.5) {};
		\node [style=none] (252) at (-8, 2.25) {};
		\node [style=WIDEmorph] (253) at (-7.75, 0.5) {$\putt{}_1$};
		\node [style=none] (254) at (-7.25, 0.5) {};
		\node [style=none] (255) at (-8.25, 0.25) {};
		\node [style=none] (258) at (-6.5, 0.5) {};
		\node [style=WIDEmorph] (259) at (-6, 0.5) {$\putt{}_1$};
		\node [style=none] (260) at (-6.5, 0.25) {};
		\node [style=none] (261) at (-5.5, 0.5) {};
		\node [style={white_dot}] (262) at (-7.75, -1.75) {};
		\node [style={white_dot}] (263) at (-6.25, -1.75) {};
		\node [style=none] (264) at (-6.25, -4) {};
		\node [style={white_dot}] (265) at (-8.5, -3.75) {};
		\node [style=none] (266) at (-8.5, -4.75) {};
		\node [style=morph] (267) at (-7.75, -2.75) {$g_2$};
		\node [style=none] (268) at (-4.5, 1.75) {$=$};
		\node [style=morph] (269) at (-2.25, 4) {$g_2$};
		\node [style=WIDEmorph] (270) at (-2.75, 6.25) {$\putt{}_2$};
		\node [style=none] (271) at (-3.25, 6) {};
		\node [style=none] (272) at (-2.25, 6.25) {};
		\node [style={white_dot}] (273) at (-3, 3.25) {};
		\node [style=none] (274) at (-3.25, 2) {};
		\node [style=none] (275) at (-3.25, 6.5) {};
		\node [style=none] (276) at (-3.25, 7.25) {};
		\node [style=WIDEmorph] (277) at (-2.75, 2) {$T_2$};
		\node [style=none] (278) at (-3.25, 1.75) {};
		\node [style=none] (279) at (-2.5, 2) {};
		\node [style=none] (280) at (-2.25, -0.5) {};
		\node [style=none] (281) at (-2.25, 2) {};
		\node [style=WIDEmorph] (282) at (-1.75, -0.5) {$\putt{}_1$};
		\node [style=none] (283) at (-1.25, -0.5) {};
		\node [style=none] (284) at (-2.25, -0.75) {};
		\node [style=none] (291) at (-1.25, -4.25) {};
		\node [style={white_dot}] (292) at (-2.75, -3) {};
		\node [style=none] (293) at (-2.75, -4) {};
		\node [style=morph] (294) at (-2.25, -1.75) {$g_2$};
		\node [style={white_dot}] (295) at (-2.25, 0.5) {};
		\node [style=none] (296) at (-0.25, 1.75) {$=$};
		\node [style=morph] (297) at (1.75, 3.25) {$g_2$};
		\node [style=WIDEmorph] (298) at (1.25, 5.5) {$\putt{}_2$};
		\node [style=none] (299) at (0.75, 5.25) {};
		\node [style=none] (300) at (1.75, 5.5) {};
		\node [style={white_dot}] (301) at (1, 2.5) {};
		\node [style=none] (303) at (0.75, 5.75) {};
		\node [style=none] (304) at (0.75, 6.5) {};
		\node [style=none] (308) at (1.75, 0.5) {};
		\node [style=WIDEmorph] (310) at (2.25, 0.5) {$\putt{}_1$};
		\node [style=none] (311) at (2.75, 0.5) {};
		\node [style=none] (312) at (1.75, 0.25) {};
		\node [style=none] (313) at (2.75, -3.25) {};
		\node [style={white_dot}] (314) at (1.25, -2) {};
		\node [style=none] (315) at (1.25, -3) {};
		\node [style=morph] (316) at (1.75, -0.75) {$g_2$};
		\node [style={white_dot}] (317) at (1.75, 1.5) {};
		\node [style=none] (318) at (3.75, 1.75) {$=$};
		\node [style=morph] (319) at (5.75, 3.25) {$g_2$};
		\node [style=WIDEmorph] (320) at (5.25, 5.5) {$\putt{}_2$};
		\node [style=none] (321) at (4.75, 5.25) {};
		\node [style=none] (322) at (5.75, 5.5) {};
		\node [style={white_dot}] (323) at (5, 2.5) {};
		\node [style=none] (324) at (4.75, 5.75) {};
		\node [style=none] (325) at (4.75, 6.5) {};
		\node [style=none] (326) at (6.5, -3) {};
		\node [style={white_dot}] (329) at (5.75, 0.25) {};
		\node [style={white_dot}] (331) at (5.25, -2) {};
		\node [style=none] (332) at (5.25, -3) {};
		\node [style=morph] (333) at (5.75, -0.75) {$g_2$};
		\node [style={white_dot}] (334) at (6.5, -2) {};
		\node [style=none] (335) at (7.25, 1.75) {$=$};
		\node [style=morph] (336) at (9.25, 1) {$g_2$};
		\node [style=WIDEmorph] (337) at (8.75, 3.25) {$\putt{}_2$};
		\node [style=none] (338) at (8.25, 3) {};
		\node [style=none] (339) at (9.25, 3.25) {};
		\node [style={white_dot}] (340) at (8.5, 0.25) {};
		\node [style=none] (341) at (8.25, 3.5) {};
		\node [style=none] (342) at (8.25, 4.25) {};
		\node [style=none] (343) at (9.75, -0.75) {};
		\node [style=none] (346) at (8.5, -0.75) {};
		\node [style={white_dot}] (348) at (9.75, 0.25) {};
	\end{pgfonlayer}
	\begin{pgfonlayer}{edgelayer}
		\draw [style=new edge style 2] (117) to (119.center);
		\draw (121.center) to (122.center);
		\draw [in=135, out=-90, looseness=0.75] (242.center) to (244);
		\draw [in=90, out=-90] (244) to (245.center);
		\draw [in=-90, out=0] (244) to (240);
		\draw (246.center) to (247.center);
		\draw [in=90, out=-90] (252.center) to (251.center);
		\draw [in=90, out=-90] (250.center) to (258.center);
		\draw [style=new edge style 2] (263) to (264.center);
		\draw (243.center) to (240);
		\draw (266.center) to (265);
		\draw [in=270, out=150, looseness=0.50] (265) to (249.center);
		\draw [in=15, out=-90, looseness=1.25] (260.center) to (262);
		\draw [in=255, out=120, looseness=1.25] (262) to (255.center);
		\draw [in=90, out=-90] (262) to (267);
		\draw [in=15, out=-90] (267) to (265);
		\draw [style=new edge style 2, in=180, out=-90, looseness=0.50] (254.center) to (263);
		\draw [style=new edge style 2, in=270, out=0, looseness=0.50] (263) to (261.center);
		\draw [in=135, out=-90, looseness=0.75] (271.center) to (273);
		\draw [in=90, out=-90] (273) to (274.center);
		\draw [in=-90, out=0] (273) to (269);
		\draw (275.center) to (276.center);
		\draw (272.center) to (269);
		\draw (293.center) to (292);
		\draw [in=270, out=150, looseness=0.50] (292) to (278.center);
		\draw [in=15, out=-90] (294) to (292);
		\draw (284.center) to (294);
		\draw [style=new edge style 2] (283.center) to (291.center);
		\draw [in=45, out=-90, looseness=1.25] (281.center) to (295);
		\draw [in=285, out=135, looseness=1.25] (295) to (279.center);
		\draw (295) to (280.center);
		\draw [in=135, out=-90, looseness=0.75] (299.center) to (301);
		\draw [in=-90, out=0] (301) to (297);
		\draw (303.center) to (304.center);
		\draw (300.center) to (297);
		\draw (315.center) to (314);
		\draw [in=15, out=-90] (316) to (314);
		\draw (312.center) to (316);
		\draw [style=new edge style 2] (311.center) to (313.center);
		\draw (317) to (308.center);
		\draw [in=150, out=-90, looseness=0.50] (301) to (314);
		\draw [in=135, out=-90, looseness=0.75] (321.center) to (323);
		\draw [in=-90, out=0] (323) to (319);
		\draw (324.center) to (325.center);
		\draw (322.center) to (319);
		\draw (332.center) to (331);
		\draw [in=15, out=-90] (333) to (331);
		\draw (329) to (333);
		\draw [style=new edge style 2] (334) to (326.center);
		\draw [in=150, out=-90, looseness=0.50] (323) to (331);
		\draw [in=135, out=-90, looseness=0.75] (338.center) to (340);
		\draw [in=-90, out=0] (340) to (336);
		\draw (341.center) to (342.center);
		\draw (339.center) to (336);
		\draw [style=new edge style 2] (348) to (343.center);
		\draw (346.center) to (340);
	\end{pgfonlayer}
\end{tikzpicture}\]
\end{proof}
\begin{prop}
First-Last-Depending tagged lenses define a symmetric monoidal sub-category $\mathbf{fldepTL}[\mathcal{C}]$ of $\cat{TL}(\mathcal{C})$
\end{prop}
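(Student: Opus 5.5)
The plan is to treat the two closure conditions separately. Identities and the symmetric monoidal structure maps of $\cat{TL}(\mathcal{C})$ are all tagged lenses whose tag is trivial, i.e. it discards the prior and future view and returns the system unchanged. A trivial tag satisfies $T(T(s,v,v'),v',v'')=s=T(s,v,v'')$, so these maps lie in $\mathbf{fldepTL}[\mathcal{C}]$. Associativity, unitality and the symmetric monoidal coherences are then inherited from $\cat{TL}(\mathcal{C})$. What remains is to show that first-last dependence is preserved by $\otimes$ and by sequential composition $\circ$.

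Parallel composition is immediate. The composite tag acts componentwise, $T((s,t),(v,w),(v',w')) = (T_1(s,v,v'),T_2(t,w,w'))$, so the equation $T(T(-,x,x'),x',x'')=T(-,x,x'')$ splits into the two component equations, which hold by hypothesis. Graphically, the two parallel strands are rewritten independently.

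For sequential composition I will read off an algebraic form of the composite tag from the diagram in the composition proposition. Roughly, $T(s,a,b)$ applies $T_1$ to the inner system $g_2(s)$ with parameters $a,b$. It then feeds the resulting inner update, $u_{ab}:=\putt{}_1(T_1(g_2 s,a,b),b)$, as the future value into $T_2$, using $g_2(s)$ as the prior value, and it reinserts via $\putt{}_2$. The proof then computes $T(T(s,a,b),b,c)$ in order as follows.

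First, compute $g_2$ of the intermediate state $T(s,a,b)$. Here the earlier result $g_2\circ T_2 = g_2$ and \textbf{putget} for $(\putt{}_2,g_2)$ give $u_{ab}$, so the outer $T_1$ acts on $u_{ab}$.

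Second, simplify the inner part. I push $T_1$ through $\putt{}_1$ using the tag commutation axiom $\putt{}_1(T_1(x,\cdot,\cdot),\cdot)=T_1(\putt{}_1(x,\cdot),\cdot,\cdot)$. I then apply first-last dependence of $T_1$, $T_1(T_1(x,a,b),b,c)=T_1(x,a,c)$, and \textbf{putput} for $\putt{}_1$. Together these collapse the inner update to $u_{ac}$.

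Third, simplify the outer part. The outer system now carries two $T_2$ applications, with parameter pairs $(g_2 s,u_{ab})$ and $(u_{ab},u_{ac})$, interleaved with two $\putt{}_2$'s. Using commutation of $T_2$ with $\putt{}_2$, I gather the two $T_2$'s adjacent. First-last dependence of $T_2$ then merges them into $T_2(-,g_2 s,u_{ac})$, and \textbf{putput} for $\putt{}_2$ removes the redundant put. The result is exactly $T(s,a,c)$.

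The main obstacle is the third step. I must check that the prior-view parameter fed to the second $T_2$ really equals the future parameter $u_{ab}$ of the first, which is what the first step supplies. I must also check that the copy nodes in the diagram can be slid so that the two $T_2$ boxes become adjacent. I would do this graphically, using the comonoid-homomorphism property to duplicate $g_2$ and $\putt{}_1$ across copies. I would mirror the sequence of diagram rewrites used in the proof of the change-depending proposition, replacing its final use of $T(s,v,v)=s$ with the first-last equation.
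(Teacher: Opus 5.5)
Your treatment of identities, structure maps and $\otimes$ is fine, and your plan for $\circ$ (expand $T(T(s,a,b),b,c)$, then use putget, the commutation axiom, first-last dependence of each component tag, and putput) has the right shape. The gap is that you have misread the composite tag, so the computation you describe is for the wrong map. Write $x:=g_2(s)$. Reading the diagram, using $g_2\circ T_2=g_2$, and as forced by functoriality of $\mathcal{L}$, the composite tag is
\[
T(s,a,b)=\putt{}_2\big(s_2,\;T_1(x,a,b)\big),\qquad s_2:=T_2\big(s,\;\putt{}_1(x,a),\;\putt{}_1(T_1(x,a,b),b)\big).
\]
So the prior parameter of $T_2$ is $\putt{}_1(x,a)$, not $g_2(s)$. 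And $\putt{}_2$ reinserts $T_1(x,a,b)$, not $u_{ab}$. Your map reinserts $u_{ab}$, so its view is $g_1(u_{ab})=b$ by putget for lens 1. It therefore violates $g\circ T=g$ and is not a tag; for $a=g(s)$ it is the composite induced put $P(s,b)$. In particular your first step is false for the actual tag: putget for $(\putt{}_2,g_2)$ gives $g_2(T(s,a,b))=T_1(x,a,b)$, not $u_{ab}$.

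The repair keeps your template but changes where each axiom enters. Put $y:=T_1(x,a,b)$ and $t:=T(s,a,b)$.

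\begin{itemize}
\item Since $g_2(t)=y$, the outer $T_1$ is $T_1(y,b,c)=T_1(x,a,c)$ by first-last dependence of $T_1$ alone. No commutation through $\putt{}_1$ and no putput for $\putt{}_1$ is needed.
\item The second $T_2$ receives prior parameter $\putt{}_1(g_2(t),b)=\putt{}_1(y,b)$. This is literally the future parameter of the first $T_2$. So the matching you flagged as the main obstacle comes from the $\putt{}_1(g_2(-),a)$ form of the prior parameter, not from your step one.
\item Under your reading the prior parameter would be $g_2(t)=y$, and the matching fails. Indeed $g_1(y)=g_1(x)$ while $g_1(\putt{}_1(y,b))=b$, so first-last dependence of $T_2$ could not be applied.
\item Finally, commute the second $T_2$ past $\putt{}_2$ via the tag axiom. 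Merge the two $T_2$'s by first-last dependence into $T_2\big(s,\putt{}_1(x,a),\putt{}_1(T_1(x,a,c),c)\big)$. Then putput for $\putt{}_2$ discards the intermediate reinsertion of $y$, leaving exactly $T(s,a,c)$.
\end{itemize}
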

\begin{proof}
Given in the Appendix
\end{proof}

\section{Lens Laws Entailed by Tag Laws}
We now observe the entailment between compositional classes of tagged lenses and compositional classes of standard lenses, showing that each subcategory of tagged lenses defined by a design law maps into a corresponding subcategory of standard lenses. Since every tagged lens already induces a putget lens, the first new consequence is that change-depending tags force getput.
\begin{prop}[GetPut]
Let $(\putt{},g,T)$ be a tagged lens, if $T$ is change-depending then $L((\putt{},g,T))$ satisfies $\mathbf{getput}$. 
\end{prop}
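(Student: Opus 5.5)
The plan is to unfold the definition of the induced lens and reduce \textbf{getput} for $L((\putt{},g,T))$ to \textbf{getput} for the underlying vwb-lens $(\putt{},g)$, using change-dependence to remove the tag. Algebraically, the target equation is $P(s,g(s)) = s$, where by the definition of the induced lens
\[
P(s,g(s)) = \putt{}(T(s,g(s),g(s)),g(s)).
\]
Point-free, the left-hand side is the composite that copies $s$, feeds $g(s)$ into $P$, and then expands $P$. This gives a diagram containing two copies of $g$ applied to the same input $s$: one feeding the past input of $T$, and one duplicated by the copy map into both the future input of $T$ and the view input of $\putt{}$.

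The first step is to normalise this wiring. Every morphism of $\mathcal{C}$ is a comonoid homomorphism, so the two copies of $g$ merge into a single $g$ whose output is copied three times. Coassociativity lets us regroup the copies so that a single copy node feeds exactly the two view inputs of $T$, with the third copy going to $\putt{}$. This regrouping is the only step requiring care; in $\mathbf{Set}$ it is immediate from the formula. In the graphical calculus, I would first slide the lower $g$ through the copy node, which yields two $g$'s on copies of $s$, and then identify them with the $g$ on the past input by the homomorphism law applied in reverse.

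Once the two view inputs of $T$ are fed by a single copied wire, the change-depending law applies: that sub-diagram becomes the identity on $S$ together with a deletion of the copied view wire. Counitality then removes the now-redundant copy node. What remains is the diagram that copies $s$, applies $g$ to one copy, and feeds the pair $(s, g(s))$ into $\putt{}$. This is precisely the left-hand side of \textbf{getput} for the vwb-lens $(\putt{},g)$, which holds by assumption, giving $s$.

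The tag commutation axiom is not needed here. I expect the only mild obstacle to be bookkeeping the copy maps correctly so that the change-depending equation matches verbatim. Algebraically, the whole argument is the chain
\[
\putt{}(T(s,g(s),g(s)),g(s)) = \putt{}(s,g(s)) = s.
\]
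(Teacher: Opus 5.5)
Your proof is correct, but the route differs from the paper's. The paper first applies the tag axiom $\putt{}(T(s,v_1,v_2),v_3) = T(\putt{}(s,v_3),v_1,v_2)$ to push $T$ above $\putt{}$, rewriting $P(s,g(s))$ as $T(\putt{}(s,g(s)),g(s),g(s))$. It then splits and regroups the copies of $g$ using the comonoid-homomorphism law and applies \textbf{getput} of the underlying vwb-lens, which gives $T(s,g(s),g(s))$. Only at that point does it invoke change-dependence, followed by deletion and counitality, to reach the identity. You reverse the order. You apply change-dependence directly to the tag inside the induced put, so $\putt{}(T(s,g(s),g(s)),g(s)) = \putt{}(s,g(s))$, and then finish with the vwb \textbf{getput}.

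Your ordering is shorter and shows that the commutation axiom is not needed here. The implication holds for any $(\putt{},g)$ satisfying \textbf{getput} and any change-depending $T$, with no compatibility required between them. The paper's ordering instead passes through the identity $P(s,g(s)) = T(s,g(s),g(s))$. That identity holds for every tagged lens, whatever the tag, and says that retrieving and reinserting applies only a tag. Change-dependence then appears as exactly the condition that makes this residual tag trivial.

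Your bookkeeping of the copy maps is right. You merge the two $g$'s by the homomorphism law and regroup by coassociativity so that a single copy node feeds exactly the two view inputs of $T$. This is no harder than the regrouping the paper itself performs.
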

\begin{proof}
Given in the appendix.
\end{proof}
An example of a lens which is well-behaved as a consequence of having a change-depending tag is \texttt{PutCountChanges}.
\begin{example}[PutCountChanges]
Since \texttt{CountChanges} defined by
\begin{align*}
    \texttt{CountChanges}:: (i,v) \times v' \times v'' \\
    \mapsto (i+1,v) \textrm{ if } v' \neq v'' \\
    \mapsto (i,v) \textrm{ if } v' = v'' \\
\end{align*} is a change-depending tag, it follows that $(\texttt{PutCountChanges},g) = \mathcal{L}((\putt{},g,\texttt{CountChanges}))$ satisfies $\textbf{getput}$ and so is well-behaved.
\end{example}
Next, every first-last-depending tag induces a lens which satisfies putput.
\begin{prop}
Let $(\putt{},g,T)$ be a tagged lens, if $T$ is first-last-depending then $L((\putt{},g,T))$ satisfies $\mathbf{putput}$. 
\end{prop}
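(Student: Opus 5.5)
We argue algebraically, using the induced put $p(s,v) = \putt{}(T(s,g(s),v),v)$. The goal is to show $p(p(s,v_1),v_2) = p(s,v_2)$.

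The first step is to compute the inner get. By putget for the induced lens (the earlier proposition), $g(p(s,v_1)) = v_1$. Hence
\[
p(p(s,v_1),v_2) = \putt{}\big(T(p(s,v_1),v_1,v_2),v_2\big) = \putt{}\big(T(\putt{}(T(s,g(s),v_1),v_1),v_1,v_2),v_2\big).
\]

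The second step moves the outer tag past the inner put. The defining tag axiom says $T(\putt{}(s',v_3),v_1,v_2) = \putt{}(T(s',v_1,v_2),v_3)$. Applying it with $s' = T(s,g(s),v_1)$ turns the expression into
\[
\putt{}\big(\putt{}(T(T(s,g(s),v_1),v_1,v_2),v_1),v_2\big).
\]

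The third step collapses the two puts. The underlying lens is vwb, so putput for $\putt{}$ gives
\[
\putt{}\big(T(T(s,g(s),v_1),v_1,v_2),v_2\big).
\]

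The last step applies the first-last-depending law $T(T(s,v,v'),v',v'') = T(s,v,v'')$ with $v = g(s)$, $v' = v_1$ and $v'' = v_2$. This yields $\putt{}(T(s,g(s),v_2),v_2) = p(s,v_2)$, which is putput for the induced lens.

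The main obstacle is the second step: the middle argument of the outer tag must be exactly $v_1$ so that the tags chain. This depends on first obtaining $g(p(s,v_1)) = v_1$, and on keeping track of which copy of the view feeds each port when the tag is commuted through $\putt{}$.

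For the graphical version, the same chain would be drawn as in the putget proof. Copy-naturality lets the $g$ applied to the output of $P$ be replaced by a copy of the $v_1$ wire. Commuting $T$ below $\putt{}$ and merging the two puts via putput then leaves two stacked tags, which the first-last-depending diagram fuses into one.
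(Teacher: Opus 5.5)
Your proof is correct and takes essentially the same route as the paper's graphical proof. You expand the induced put, replace $g(p(s,v_1))$ by $v_1$ using putget and copying, commute the outer tag past the inner put, merge the two puts by putput of the underlying vwb lens, and fuse the stacked tags $T(T(s,g(s),v_1),v_1,v_2)$ into $T(s,g(s),v_2)$ by first-last dependence; the paper's displayed diagrams pick up exactly at your third-step expression, with the leftover copy of $v_1$ discarded.
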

\begin{proof}
\[\input{tagfigs/flputputa.tikz}\]
\[\begin{tikzpicture}[scale=0.5, {every node/.style}={scale=0.5}]
	\begin{pgfonlayer}{nodelayer}
		\node [style=WIDEmorph] (172) at (-7.25, 0.75) {$T$};
		\node [style={white_dot}] (173) at (-5.5, 1) {};
		\node [style=none] (174) at (-6.75, 0.75) {};
		\node [style=WIDEmorph] (175) at (-5.25, 4) {$\putt{}$};
		\node [style=none] (177) at (-5.75, 3.75) {};
		\node [style={white_dot}] (178) at (-4, -0.75) {};
		\node [style=none] (180) at (-4.75, 4) {};
		\node [style=none] (181) at (-4, -2.25) {};
		\node [style=none] (182) at (-5.75, 5) {};
		\node [style=none] (183) at (-5.75, 4.25) {};
		\node [style=WIDEmorph] (184) at (-7, -2) {$T$};
		\node [style=none] (185) at (-6.75, -2) {};
		\node [style=none] (186) at (-6.5, -2) {};
		\node [style=morph] (187) at (-6.75, -3.25) {$g$};
		\node [style=none] (188) at (-7.5, -1.75) {};
		\node [style=none] (189) at (-7.5, -2.25) {};
		\node [style={white_dot}] (190) at (-5.5, -3.5) {};
		\node [style={white_dot}] (191) at (-5.75, -0.75) {};
		\node [style=none] (192) at (-5.5, -5) {};
		\node [style={white_dot}] (193) at (-7.25, -4) {};
		\node [style=none] (194) at (-7.25, -5) {};
		\node [style=none] (195) at (-7.75, 1) {};
		\node [style=none] (197) at (-7.75, 0.5) {};
		\node [style=none] (198) at (-7, 0.75) {};
		\node [style=none] (199) at (-8.75, 0.25) {$=$};
		\node [style=WIDEmorph] (200) at (-1.5, 1.5) {$T$};
		\node [style=none] (202) at (-1, 1.5) {};
		\node [style=WIDEmorph] (203) at (-1, 3.25) {$\putt{}$};
		\node [style=none] (204) at (-1.5, 3) {};
		\node [style={white_dot}] (205) at (0.75, 0.5) {};
		\node [style=none] (206) at (-0.5, 3.25) {};
		\node [style=none] (207) at (1.25, -1.5) {};
		\node [style=none] (208) at (-1.5, 4.25) {};
		\node [style=none] (209) at (-1.5, 3.5) {};
		\node [style=WIDEmorph] (210) at (-1.25, -1.25) {$T$};
		\node [style=none] (211) at (-1, -1.25) {};
		\node [style=none] (212) at (-0.75, -1.25) {};
		\node [style=morph] (213) at (-1, -2.5) {$g$};
		\node [style=none] (214) at (-1.75, -1) {};
		\node [style=none] (215) at (-1.75, -1.5) {};
		\node [style={white_dot}] (216) at (0.25, -2.75) {};
		\node [style=none] (218) at (0.25, -4.25) {};
		\node [style={white_dot}] (219) at (-1.5, -3.25) {};
		\node [style=none] (220) at (-1.5, -4.25) {};
		\node [style=none] (221) at (-2, 1.75) {};
		\node [style=none] (222) at (-2, 1.25) {};
		\node [style=none] (223) at (-1.25, 1.5) {};
		\node [style=none] (224) at (-3, 0.25) {$=$};
		\node [style=WIDEmorph] (225) at (3.75, 1) {$T$};
		\node [style=none] (226) at (4.25, 1) {};
		\node [style=WIDEmorph] (227) at (4.25, 2.75) {$\putt{}$};
		\node [style=none] (228) at (3.75, 2.5) {};
		\node [style={white_dot}] (229) at (6, 0) {};
		\node [style=none] (230) at (4.75, 2.75) {};
		\node [style=none] (231) at (6, -3) {};
		\node [style=none] (232) at (3.75, 3.75) {};
		\node [style=none] (233) at (3.75, 3) {};
		\node [style=none] (235) at (4, 1) {};
		\node [style=morph] (237) at (4, -1.25) {$g$};
		\node [style=none] (239) at (3.25, 0.75) {};
		\node [style={white_dot}] (240) at (5, -2) {};
		\node [style=none] (241) at (5, -3) {};
		\node [style={white_dot}] (242) at (3.5, -2) {};
		\node [style=none] (243) at (3.5, -3) {};
		\node [style=none] (244) at (3.25, 1.25) {};
		\node [style=none] (247) at (2.25, 0.25) {$=$};
		\node [style={white_dot}] (260) at (8.75, -0.25) {};
		\node [style=none] (261) at (8.75, -1.25) {};
		\node [style=none] (265) at (7.25, 0.25) {$=$};
		\node [style=WIDEmorph] (266) at (9, 1.25) {$P$};
		\node [style=none] (267) at (8.5, 1.5) {};
		\node [style=none] (268) at (9.5, 1.25) {};
		\node [style=none] (269) at (9.5, -1.25) {};
		\node [style=none] (270) at (8.5, 2.25) {};
		\node [style=none] (273) at (7.75, -1.25) {};
		\node [style=none] (277) at (8.5, 1) {};
	\end{pgfonlayer}
	\begin{pgfonlayer}{edgelayer}
		\draw [style=new edge style 5, in=30, out=-90, looseness=0.50] (180.center) to (178);
		\draw [style=new edge style 5, in=150, out=-75, looseness=0.75] (174.center) to (178);
		\draw (182.center) to (183.center);
		\draw [style=new edge style 5] (178) to (181.center);
		\draw [in=90, out=-90] (185.center) to (187);
		\draw [style=new edge style 2, in=30, out=-90, looseness=0.50] (191) to (190);
		\draw [style=new edge style 2, in=150, out=-75, looseness=0.75] (186.center) to (190);
		\draw [in=120, out=-90, looseness=0.75] (189.center) to (193);
		\draw (193) to (194.center);
		\draw [in=-90, out=0] (193) to (187);
		\draw [style=new edge style 2] (190) to (192.center);
		\draw [in=90, out=-90, looseness=1.25] (177.center) to (195.center);
		\draw [style=new edge style 2, in=150, out=-90] (198.center) to (191);
		\draw [in=90, out=-90] (197.center) to (188.center);
		\draw [style=new edge style 2, in=90, out=-90] (173) to (191);
		\draw [style=new edge style 5, in=30, out=-90, looseness=0.50] (206.center) to (205);
		\draw [style=new edge style 5, in=150, out=-75, looseness=0.75] (202.center) to (205);
		\draw (208.center) to (209.center);
		\draw [style=new edge style 5, in=90, out=-90] (205) to (207.center);
		\draw [in=90, out=-90] (211.center) to (213);
		\draw [style=new edge style 2, in=150, out=-75, looseness=0.75] (212.center) to (216);
		\draw [in=120, out=-90, looseness=0.75] (215.center) to (219);
		\draw (219) to (220.center);
		\draw [in=-90, out=0] (219) to (213);
		\draw [style=new edge style 2] (216) to (218.center);
		\draw [in=90, out=-90, looseness=1.25] (204.center) to (221.center);
		\draw [in=90, out=-90] (222.center) to (214.center);
		\draw [style=new edge style 2, in=45, out=-90, looseness=0.50] (223.center) to (216);
		\draw [style=new edge style 5, in=60, out=-90, looseness=0.75] (230.center) to (229);
		\draw [style=new edge style 5, in=150, out=-75, looseness=0.75] (226.center) to (229);
		\draw (232.center) to (233.center);
		\draw [style=new edge style 5, in=90, out=-90] (229) to (231.center);
		\draw [in=90, out=-90] (235.center) to (237);
		\draw [in=120, out=-90, looseness=0.75] (239.center) to (242);
		\draw (242) to (243.center);
		\draw [in=-90, out=0] (242) to (237);
		\draw [style=new edge style 2] (240) to (241.center);
		\draw [in=90, out=-90, looseness=1.25] (228.center) to (244.center);
		\draw [style=new edge style 2] (260) to (261.center);
		\draw [style=new edge style 5, in=90, out=-90] (268.center) to (269.center);
		\draw (267.center) to (270.center);
		\draw [in=270, out=90] (273.center) to (277.center);
	\end{pgfonlayer}
\end{tikzpicture}.\]
\end{proof}
The lens \texttt{PutFlag} is an example of a lens which satisfies \textbf{putput} as a consequence of its tag being first-last depending.
\begin{example}[PutFlag]
Since
\[\texttt{Flag}:: (i,v) (v',v'') \mapsto (1,v) \]
is first-last depending it follows that $(\texttt{PutFlag},g) = L((\putt{},g,\texttt{Flag}))$ satisfies $\textbf{putput}$.
\end{example}
Putting together both entailment results leads to an immediate corollary.
\begin{corr}
Let $(\putt{},g,T)$ be a tagged lens, if $T$ is change-depending and first-last-depending then $L((\putt{},g,T))$ is a vwb-lens.
\end{corr}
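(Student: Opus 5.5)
The corollary follows by combining the three entailment results already established, so I would not recompute anything at the level of diagrams. A vwb-lens is by definition a lens satisfying \textbf{putget}, \textbf{getput} and \textbf{putput}. For each law I would invoke the corresponding earlier proposition, applied to the same induced lens $L((\putt{},g,T)) = (P,g)$ with $P(s,v) = \putt{}(T(s,g(s),v),v)$.

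First, the proposition that every lens induced by a tagged lens satisfies \textbf{putget} applies with no hypothesis on $T$, giving $g(P(s,v)) = v$. Second, since $T$ is change-depending, the \textbf{GetPut} proposition gives $P(s,g(s)) = s$; algebraically this amounts to $\putt{}(T(s,g(s),g(s)),g(s)) = \putt{}(s,g(s)) = s$, using $T(s,v,v) = s$ and then \textbf{getput} for the underlying vwb-lens. Third, since $T$ is first-last-depending, the \textbf{putput} proposition gives $P(P(s,v_1),v_2) = P(s,v_2)$. As a sanity check of the mechanism, the left side unfolds to
\[
\putt{}(T(\putt{}(T(s,g(s),v_1),v_1),v_1,v_2),v_2),
\]
where the inner old view is $v_1$ by \textbf{putget}. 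One then commutes $T$ past $\putt{}$ using the tag axiom, collapses the two tags by $T(T(s,v,v'),v',v'') = T(s,v,v'')$, and finally collapses the two puts by \textbf{putput} of the underlying lens.

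Conjoining the three conclusions shows that $L((\putt{},g,T))$ satisfies all three laws, and is therefore vwb. The only point needing care is that the two hypotheses are used independently and do not interfere. The \textbf{getput} argument never uses first-last dependence, and the \textbf{putput} argument never uses change dependence. The two propositions being combined are stated for arbitrary tagged lenses under a single hypothesis each, so there is no real obstacle. Categorically, the same reasoning says that $\mathcal{L}$ restricts to a functor from $\mathbf{cdepTL}[\mathcal{C}] \cap \mathbf{fldepTL}[\mathcal{C}]$ into $\mathbf{vwbL}[\mathcal{C}]$.
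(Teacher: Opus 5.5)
Your proposal is correct and takes the same route as the paper. The paper also obtains the corollary by combining the unconditional \textbf{putget} result with the \textbf{getput} entailment for change-depending tags and the \textbf{putput} entailment for first-last-depending tags; your algebraic checks of the latter two (via $T(s,v,v)=s$, tag commutation, the first-last collapse, and the underlying lens laws) are also sound.
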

Meaning that such a tag at most introduces a constant complement:
\begin{example}[PutScaled]
Since
\[\texttt{Scale} (x,y) (v,v') = (x,v' y / v) \]
is a change-depending and first-last-depending tag, the induced lens $(\texttt{PutScaled},g)$ is a vwb lens.
\end{example}

\section{Summary}
A variety of new symmetric monoidal categories embedding into the category $\cat{L}(\mathcal{C})$ of lenses over $\mathcal{C}$ have been defined. We summarise the results via a tree of embeddings between classes. Using the following key: [$\cat{L}(\mathcal{C})$: Category of Lenses] [$\cat{pgL}(\mathcal{C})$: Category of Lenses satisfying PutGet] [$\cat{wbL}(\mathcal{C})$: Category of wb-Lenses] [$\cat{vwbL}(\mathcal{C})$: Category of vwb-lenses
   ] [$\cat{pgppL}(\mathcal{C})$: Category of Lenses satisfying PutGet and PutPut
   ] [$\cat{TL}(\mathcal{C})$: Category of Tagged Lenses
   ] [$\mathbf{cdepTL}[\mathcal{C}]$: Category of Change-Depending Tagged Lenses
   ] [$\cat{fldepTL}(\mathcal{C})$: Category of First-Last-depending Tagged Lenses ]
and using an arrow $\mathcal{C} \rightarrow \mathcal{D}$ to encode the statement ``$\mathcal{C}$ is a monoidal subcategory of $\mathcal{D}$", we obtain the following tree of embeddings between sub-classes of lenses:
\[\tikzfigscale{0.8}{tagfigs/tree}\]
This tree shows that tagged lenses provide a clean test case for compositional entailment of lens laws by laws on side-effects. 

The notions of lens and of side-effect used to develop this test case are by no means the most general, and so left open is the question of whether the story of this test case lifts well to more general notions of both lens and side-effect.
For instance, outside of the scope of this paper is the extension of these observations to richer approaches for defining lenses such as the frameworks of profunctor optics \cite{Riley2018CategoriesOO, clarke2020profunctor, lens_dynamics_doubles2021}, symmetric lenses \cite{clarke_diagrammatic} , delta lenses \cite{clarke2020profunctor}, lenses based on the Grothendieck construction \cite{spivak2020generalized}, fibred lenses \cite{braithwaite2021fibre}, update structures \cite{hefford2020categories, safari}, higher lenses \cite{higher_lenses}, and structures similar to lenses used to model features of game theory \cite{hedges_games, lens_comp_game, dilavore2020games, capucci2021translating} and cybernetics \cite{cyber_kittens}. A second direction beyond the  scope of this paper is whether these design laws and their consequences can be re-phrased within those richer frameworks for introducing side-effects, \cite{abou_monads, monadic_combinator, monads_entangled_notions, monadic_composing} to set-based lenses.

%\color{blue}
%\begin{prop}
%\label{lensfairlyequiv}
%Fwb lenses $(S,V,g,p)$ in a category $\mathcal{C}$ with finite products are weak update structures that have ignorable gets, $\mix{} := \pi_2$, and $\copyy{} := \delta_V$.
%\end{prop}
%\begin{proof}
%The repeat update axiom follows directly from the PutPut rule for Fwb lenses
%\end{proof}
%As a result, as noted in \cite{hefford2020categories}, in any Fwb lens $\putt{} \circ \get{}$ is idempotent and can be used to buid a vwb in $\cat{SPLIT(C)}$.
%\color{black}

%\begin{equation}\label{qmdatabase}
%    \tikzfigscale{1}{figs/qmdatabasefhilb}
%\end{equation}
%Since decoherence $\deco{}$ is a magma co-magma homomorphism for $\mix{}$ and $\copyy{}$ FALSE, applying $\deco{}$ to the embedding of this update structure into $\cpm{\cat{FHilb}}$ generates a new update structure with classical types, $\get{}$ becomes a quantum measurement and $\putt{}$ becomes the encoding of classical data into a quantum system inside the database. The update structure is weak because the system $S_{2}$ is not initially assumed to be in a classical (decohered) state, whereas after insertion by $\putt{}$ the new value of $S_{2}$ is guaranteed to be classical.

%\begin{equation}\label{qmdatabase}
%    \tikzfigscale{1}{figs/qmdatabasedeco}
%\end{equation}

%This update is not causal (Trace Preserving) CITE[], and in fact the put only ignores information in a particular basis. It turns out the the minor edit 
%\begin{prop}
%The tuple of morphisms $(\putt{},\get{},\mix{},\copyy{})$ in $\cpm{\cat{FHilb}}$ defined by 
\section*{Acknowledgements}
The majority of this work was completed while the author was a PhD student in the Quantum Group of the Department of Computer Science at the University of Oxford. Refinements have since been made while affiliated with University College London and, subsequently, CentraleSup\'elec, Universit\'e Paris-Saclay.

The author (MW) is grateful to V.~Wang and J.~Hefford for useful conversations. MW was funded by the Engineering and Physical Sciences Research Council under grant EP/L015242/1 while at the University of Oxford and under grant EP/W524335/1 while at University College London. This work was also funded by the French National Research Agency (ANR): by the project TaQC, ANR-22-CE47-0012; and within the framework of France 2030 by the projects EPIQ, ANR-22-PETQ-0007; OQULUS, ANR-23-PETQ-0013; HQI-Acquisition, ANR-22-PNCQ-0001; and HQI-R\&D, ANR-22-PNCQ-0002.

\bibliography{bibliography}

\appendix

\section{The (Symmetric Monoidal) Category of Tagged Lenses}
\begin{prop}[The Category $\cat{TL}(\mathcal{C})$]
For every cartesian monoidal category $\mathcal{C}$ a symmetric monoidal category of tagged lenses $\cat{TL}(\mathcal{C})$ can be constructed with
\begin{itemize}
    \item Objects given by the objects of $\mathcal{C}$
    \item Morphisms given by \[\mathbf{TL}[\mathcal{C}](S,S') := \{ \textrm{ Tagged lenses from $S$ to $S'$ }\}\]
\end{itemize}
along with the sequential and parallel composition of morphisms as defined in the main text.
\end{prop}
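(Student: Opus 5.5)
The plan is to verify the symmetric monoidal category axioms directly, working componentwise on triples $(\putt{},g,T)$. For the pair $(\putt{},g)$ everything is already known: the sequential and parallel composites of vwb-lenses are vwb-lenses, and they satisfy associativity, unitality, interchange and symmetry, since $\cat{vwbL}[\mathcal{C}]$ is a symmetric monoidal subcategory of $\mathbf{L}[\mathcal{C}]$ \cite{Riley2018CategoriesOO}. So the new content concerns only the tag component. For each structural law I will check two things: that the composite tag satisfies the tag axiom $\putt{}(T(s,v_1,v_2),v_3)=T(\putt{}(s,v_3),v_1,v_2)$, and that the tag components of both sides of the law coincide. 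The axiom $g\circ T = g\otimes d\otimes d$ then comes for free by the argument given earlier in the paper.

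\emph{Step 1: identities and well-definedness.} The identity on $V$ is the triple $(\pi_2,\mathrm{id}_V,\pi_1)$, that is, the basic vwb identity lens together with the trivial tag $T(s,v,v')=s$. This trivially commutes with $\putt{}$, and composing with it leaves a tag unchanged, because the trivial tag deletes its view inputs. Well-definedness of sequential composition is the key computation. One commutes $\putt{}=\putt{}_2(-,\putt{}_1(g_2(-),-))$ past the composite tag as follows.
\begin{itemize}
\item The outer $T_2$ is pushed through $\putt{}_2$ by $T_2$'s axiom.
\item The copies of $g_2(s)$ feeding the inner $T_1$ and $\putt{}_1$ are unchanged when $s$ is first passed through $T_2$, since $g_2\circ T_2$ ignores the tag inputs.
\item Where $g_2$ meets $\putt{}_2$, the \textbf{putget} law for $(\putt{}_2,g_2)$ is applied.
\item The inner $T_1$ is pushed through $\putt{}_1$ by $T_1$'s axiom.
\end{itemize}
All copy and delete rearrangements are justified by the comonoid-homomorphism property of morphisms in a cartesian category. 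Parallel composition is easier: the tag is $T_1\times T_2$ with the wires permuted, and the axiom holds factorwise.

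\emph{Step 2: associativity and unit laws for sequential composition.} This is where I expect the main obstacle. I will expand $(\phi_3\circ\phi_2)\circ\phi_1$ and $\phi_3\circ(\phi_2\circ\phi_1)$ graphically. In both, the outermost $T_3$ receives the old view $g_3(s)$ and a new view built from the inner data. The new views coincide by associativity of the underlying lens composition. The nested $T_2$ and $T_1$ applications sit inside $\putt{}$'s in different bracketings; to identify them I will commute tags past puts using the tag axiom, and use that gets are blind to tags, so that repeated $g_3(s)$ and $g_2(\cdot)$ evaluations can be merged into a single copied wire. I will first reduce both sides to a normal form in which every get is computed once from the original input and then copied, and all tags are applied before all puts; both sides should then be syntactically equal. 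Left and right unit laws are a degenerate case of the same reduction, using that the trivial tag deletes its view inputs.

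\emph{Step 3: monoidal structure.} The monoidal unit is $I$ with the unique tagged lens. The associator, unitors and symmetry are the images of those of $\mathcal{C}$, each equipped with the trivial tag. Strictness, the coherence conditions and naturality of the symmetry reduce to those of $\cat{vwbL}[\mathcal{C}]$ together with naturality of swaps in $\mathcal{C}$ for the product tag. Finally I will check the interchange law $(\phi_2\circ\phi_1)\otimes(\psi_2\circ\psi_1)=(\phi_2\otimes\psi_2)\circ(\phi_1\otimes\psi_1)$. Here the gets of a product lens act factorwise and copying a product is the product of copies (the first displayed cartesian axiom), so the composite tag splits into the two independent sequential composites.
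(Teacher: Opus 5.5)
Your plan has the same shape as the paper's proof: inherit the lens part, show the composite tag satisfies the commutation axiom, check associativity and unitality of the tag component by calculation, and treat the tensor factorwise. However, the calculations as you describe them do not close. The reason is that \textbf{putput} never appears among your moves, and it is indispensable. After using $g_2\circ T_2=g_2$, the composite tag reads $T(s,v_1,v_2)=\putt{}_2(T_2(s,x,y),T_1(a,v_1,v_2))$ with $a=g_2(s)$, $x=\putt{}_1(a,v_1)$ and $y=\putt{}_1(T_1(a,v_1,v_2),v_2)$. Your four moves (the two tag axioms, blindness of $g_2$ to $T_2$, and \textbf{putget}) take the two sides of the tag axiom to the following forms:
\begin{itemize}
\item $\putt{}(T(s,v_1,v_2),v_3)$ becomes $\putt{}_2(\putt{}_2(T_2(s,x,y),T_1(a,v_1,v_2)),T_1(a',v_1,v_2))$;
\item $T(\putt{}(s,v_3),v_1,v_2)$ becomes $\putt{}_2(\putt{}_2(T_2(s,x',y'),a'),T_1(a',v_1,v_2))$.
\end{itemize}
Here $a'=\putt{}_1(a,v_3)$, $x'=\putt{}_1(a',v_1)$ and $y'=\putt{}_1(\putt{}_1(T_1(a,v_1,v_2),v_3),v_2)$. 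Identifying the two needs \textbf{putput} for $\putt{}_2$, to discard the middle put, and for $\putt{}_1$, to get $x'=x$ and $y'=y$. This cannot be avoided. Take an outer lens satisfying only putget and getput, for instance $g_2(p,w)=w$ with $\putt{}_2((p,w),w')=(w,w')$ if $w'\neq w$ and $(p,w)$ otherwise, carrying the trivial tag. Compose it over the vwb lens $\putt{}_1((i,v),v')=(i,v')$, $g_1(i,v)=v$, carrying the counting tag $T_1((i,v),v_1,v_2)=(i+1,v)$. The composite tag then violates the axiom whenever $v_3\neq g(s)$. Associativity has the same feature. The bracketing $(\phi_3\circ\phi_2)\circ\phi_1$ produces a nested $\putt{}_3(\putt{}_3(-,-),-)$, and the other bracketing puts a nested $\putt{}_2(\putt{}_2(-,-),-)$ inside $T_3$'s new-view input. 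So your normal form must also use \textbf{putput}. Since the underlying lenses are vwb this is repairable, but it is exactly where vwb, rather than wb, is needed.

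Two further points in your sketch would also derail the computation. First, in Step 2, $T_3$ does not receive $g_3(s)$ as its old view. It receives $\putt{}_2(g_3 s,\putt{}_1(g_2 g_3 s,v))$, the intermediate state with its view reset to $v$. This coincides with $g_3(s)$ only when $v=g(s)$, that is, only after passing to the induced lens. The difference matters: with $g_2(s)$ hard-wired as $T_2$'s old view, the binary composite would already fail the tag axiom (take $T_2$ change-counting and $T_1$ trivial). Correspondingly, the new-view inputs of $T_3$ agree not by associativity of the underlying lens composition, but only after a putget/putput reduction like the one above. Second, unitality is not just ``the trivial tag deletes its view inputs''. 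The tag of $\phi\circ\mathrm{id}$ is $\putt{}(T(s,v,v'),g(s))$, which equals $T(s,v,v')$ only by the tag axiom followed by \textbf{getput}. The same computation is needed for naturality of the symmetry when the swap is the inner factor.
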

\begin{proof}
The composition rule applied to the puts ($\putt{}$) and gets ($g$) is the standard composition rule in the symmetric monoidal category of lenses over a category $\mathcal{C}$ with finite products. The remaining condition to check concerns the tag $T$. We show that $T$ indeed commutes with $\putt{}$ as required.
\[\input{tagfigs/provecommute_a.tikz}\]
\[\tikzfigscale{0.9}{tagfigs/provecommute_b}\]
\[\tikzfigscale{0.88}{tagfigs/provecommute_c}\]
\[\tikzfigscale{0.85}{tagfigs/provecommute_d}\]
\[\begin{tikzpicture}[scale=0.5, {every node/.style}={scale=0.5}]
	\begin{pgfonlayer}{nodelayer}
		\node [style=none] (0) at (100, 0) {$=$};
		\node [style=none] (1) at (101.25, -1.75) {};
		\node [style=WIDEmorph] (2) at (101.75, -0.25) {$T$};
		\node [style=none] (3) at (101.25, -0.5) {};
		\node [style=none] (4) at (102.25, -0.25) {};
		\node [style=none] (5) at (103.25, -1.75) {};
		\node [style=none] (6) at (102, -0.25) {};
		\node [style=none] (7) at (103, -1.75) {};
		\node [style=none] (8) at (101.25, 0) {};
		\node [style=none] (9) at (102.25, 1) {};
		\node [style=WIDEmorph] (10) at (102.75, 1.25) {$\putt{}$};
		\node [style=none] (11) at (102.25, 1.5) {};
		\node [style=none] (12) at (103.25, 1.25) {};
		\node [style=none] (13) at (102.5, -1.75) {};
		\node [style=none] (14) at (102.25, 2.25) {};
	\end{pgfonlayer}
	\begin{pgfonlayer}{edgelayer}
		\draw [in=90, out=-90] (4.center) to (5.center);
		\draw [in=-90, out=90, looseness=1.25] (1.center) to (3.center);
		\draw [in=90, out=-90] (6.center) to (7.center);
		\draw [in=270, out=90] (8.center) to (9.center);
		\draw [in=90, out=-90] (12.center) to (13.center);
		\draw [in=270, out=90] (11.center) to (14.center);
	\end{pgfonlayer}
\end{tikzpicture}\]
%Next we check the get non-effecting condition: 
%\[\tikzfig{tagfigs/provegetnon_a}\]
%\[\tikzfig{tagfigs/provegetnon_b}\]
%and finally the self-commuting condition.
Now let us prove associativity of sequential composition:

\[ \tikzfigscale{0.4}{comp_ass_proof_1} = \tikzfigscale{0.4}{comp_ass_proof_2} = \tikzfigscale{0.4}{comp_ass_proof_3} = \tikzfigscale{0.4}{comp_ass_proof_4} \]

\[ = \tikzfigscale{0.3}{comp_ass_proof_5} = \tikzfigscale{0.3}{comp_ass_proof_6} = \tikzfigscale{0.3}{comp_ass_proof_7} \]

\[ = \tikzfigscale{0.4}{comp_ass_proof_8} = \tikzfigscale{0.4}{comp_ass_proof_9} = \tikzfigscale{0.4}{comp_ass_proof_10} \]
 
\[ = \tikzfigscale{0.4}{comp_ass_proof_11} =  \tikzfigscale{0.4}{comp_ass_proof_12} = \tikzfigscale{0.4}{comp_ass_proof_13} = \tikzfigscale{0.4}{comp_ass_proof_14} \]

\end{proof}
Now we prove unitality of sequential composition. We take the identity tagged lens to be the identity vwb lens defined by $\putt{}(v,v') = v'$, $g(v) = v$, and $T(v,v',v'') = v$.
The left unit equation is
\[\tikzfigscale{0.6}{compose_id_2}\]
and the right unit equation is
\[\tikzfigscale{0.6}{compose_id}.\]
For the monoidal structure, it is easy to see that the tensor product returns a tagged lens and is associative and unital along with following the interchange law.

\section{An intuitive description of the tag composition rule}

 Here we provide an intuitive description of what at first glance is the most confusing component, the action of the composed tag $T$, with the aim of explaining why it turns out to be the correct tag for building $L((\putt{}_2,g_2,T_2)) \circ L((\putt{}_1,g_1,T_1))$. We begin by breaking down the composed tag $T$ in terms of the new view (orange) and the old view (green). The composed lens $P$ applies first $P_1$ then $P_2$. The tag $T$ which represents that tag that occurs when $P$ is applied, is composed of two parts. In the latter part of $T$:
the tag $T_1$ is applied to the global system $V''$ by
\begin{itemize}
    \item Applying the tag $T_2$ to the system $V'$ according to the old (green) and new (orange) values of $V$
    \item Lifting the action of $T_2$ on $V'$ to an action on $V''$ by using the vwb lens $(\putt{}_1,g_1)$.
\end{itemize}
As in the following diagram
\[\begin{tikzpicture}[scale=0.7, {every node/.style}={scale=0.7}]
	\begin{pgfonlayer}{nodelayer}
		\node [style=WIDEmorph] (18) at (4, 4) {$T_1$};
		\node [style=none] (21) at (3.5, 3.75) {};
		\node [style=morph] (23) at (3.5, 2.5) {$g_2$};
		\node [style=WIDEmorph] (26) at (3, 5.25) {$\putt{}_2$};
		\node [style=none] (28) at (2.5, 5) {};
		\node [style=none] (53) at (3.5, 5.25) {};
		\node [style={white_dot}] (55) at (2.75, 1.75) {};
		\node [style=none] (56) at (2.75, 0.75) {};
		\node [style=none] (60) at (2.5, 5.5) {};
		\node [style=none] (61) at (2.5, 6.25) {};
		\node [style=none] (62) at (3.5, 4.25) {};
		\node [style=none] (63) at (4.25, 4) {};
		\node [style=none] (75) at (4.5, 4) {};
		\node [style=none] (104) at (4.25, 1.5) {};
		\node [style=none] (105) at (5.25, 1.5) {};
		\node [style=none] (106) at (4.75, 0) {\texttt{Old Value of V}};
		\node [style=none] (107) at (7.5, 2.5) {\texttt{Future Value of V}};
		\node [style=none] (109) at (2.5, 6.75) {$V''$};
		\node [style=none] (110) at (2.75, 0.25) {$V''$};
		\node [style=none] (111) at (3.25, 4.75) {$V'$};
		\node [style=none] (112) at (3.25, 3) {$V'$};
		\node [style=none] (113) at (2, 3.5) {$V''$};
		\node [style=none] (114) at (-0.75, 4.75) {};
		\node [style=none] (115) at (1.75, 5.25) {};
		\node [style=none] (116) at (1.75, 2) {};
		\node [style=none] (117) at (-1.75, 5.75) {\texttt{Lifting The Tagging of V' to V''}};
		\node [style=none] (118) at (4.25, 1.25) {};
		\node [style=none] (119) at (4.75, 0.25) {};
		\node [style=none] (120) at (7.5, 2.25) {};
		\node [style=none] (121) at (5.75, 1.75) {};
	\end{pgfonlayer}
	\begin{pgfonlayer}{edgelayer}
		\draw [in=90, out=-90] (21.center) to (23);
		\draw [in=135, out=-90, looseness=0.75] (28.center) to (55);
		\draw [in=90, out=-90] (55) to (56.center);
		\draw [in=-90, out=0] (55) to (23);
		\draw (60.center) to (61.center);
		\draw [in=90, out=-90, looseness=1.25] (53.center) to (62.center);
		\draw [style=new edge style 2, in=90, out=-90, looseness=1.25] (63.center) to (104.center);
		\draw [style=new edge style 5, in=270, out=90] (105.center) to (75.center);
		\draw [style=new edge style 0, in=90, out=-180, looseness=0.75] (115.center) to (114.center);
		\draw [style=new edge style 0, in=-90, out=-180, looseness=0.75] (116.center) to (114.center);
		\draw [style=new edge style 0, in=90, out=-90, looseness=1.25] (118.center) to (119.center);
		\draw [style=new edge style 0, in=-180, out=0] (121.center) to (120.center);
	\end{pgfonlayer}
\end{tikzpicture}\]
The former part of $T$:
\[\begin{tikzpicture}[scale=0.7, {every node/.style}={scale=0.7}]
	\begin{pgfonlayer}{nodelayer}
		\node [style=none] (0) at (-2, 4.5) {};
		\node [style=none] (1) at (-2, 3.5) {};
		\node [style=WIDEmorph] (2) at (-1.5, 3.5) {$T_2$};
		\node [style=none] (3) at (-2, 3.25) {};
		\node [style=none] (4) at (-1.25, 3.5) {};
		\node [style=none] (5) at (0, 2.5) {};
		\node [style=none] (6) at (-1, 3.5) {};
		\node [style=WIDEmorph] (7) at (0.5, 2.5) {$\putt{}_1$};
		\node [style=none] (8) at (0, 2.25) {};
		\node [style=none] (9) at (1, 2.5) {};
		\node [style=none] (10) at (-0.25, 1.25) {};
		\node [style=WIDEmorph] (11) at (0.25, 1.25) {$T_1$};
		\node [style=none] (12) at (-0.25, 1) {};
		\node [style=none] (13) at (0.5, 1.25) {};
		\node [style=none] (14) at (0.75, 1.25) {};
		\node [style=morph] (15) at (-1.25, 0) {$g_2$};
		\node [style=morph] (16) at (-0.5, -2.75) {$g_2$};
		\node [style=none] (17) at (-0.5, -1.5) {};
		\node [style=WIDEmorph] (18) at (0, -1.5) {$\putt{}_1$};
		\node [style=none] (19) at (-0.5, -1.75) {};
		\node [style=none] (20) at (0.5, -1.5) {};
		\node [style={white_dot}] (21) at (-1.75, -2) {};
		\node [style={white_dot}] (22) at (-1.25, -3.5) {};
		\node [style={white_dot}] (23) at (1.25, -2.75) {};
		\node [style={white_dot}] (24) at (2.75, -2.75) {};
		\node [style=none] (25) at (-1.25, -4.5) {};
		\node [style=none] (26) at (1.25, -4.25) {};
		\node [style=none] (27) at (2.75, -4.25) {};
	\end{pgfonlayer}
	\begin{pgfonlayer}{edgelayer}
		\draw [in=90, out=-90] (0.center) to (1.center);
		\draw [in=90, out=-90] (6.center) to (5.center);
		\draw [in=90, out=-90, looseness=1.25] (8.center) to (10.center);
		\draw [style=new edge style 2, in=105, out=-75] (13.center) to (23);
		\draw [style=new edge style 2, in=135, out=-60] (20.center) to (23);
		\draw [style=new edge style 2] (23) to (26.center);
		\draw [style=new edge style 5] (24) to (27.center);
		\draw [in=135, out=-90, looseness=0.75] (3.center) to (21);
		\draw [in=-90, out=15] (21) to (15);
		\draw [in=90, out=-90, looseness=1.25] (12.center) to (15);
		\draw [in=90, out=-90] (4.center) to (17.center);
		\draw [in=150, out=-90] (21) to (22);
		\draw [in=-90, out=15, looseness=1.25] (22) to (16);
		\draw [in=270, out=90] (16) to (19.center);
		\draw (22) to (25.center);
		\draw [style=new edge style 5, in=135, out=-90, looseness=0.75] (14.center) to (24);
		\draw [style=new edge style 5, in=90, out=-90] (9.center) to (24);
	\end{pgfonlayer}
\end{tikzpicture}\]
Consists of applying the tag $T_1$ to the system $V''$, where the values inserted into the tag $T_1$ are the original value of the intermediate system $V'$:
\[\begin{tikzpicture}[scale=0.7, {every node/.style}={scale=0.7}]
	\begin{pgfonlayer}{nodelayer}
		\node [style=none] (0) at (-0.75, 2.5) {};
		\node [style=none] (1) at (-0.75, 1.5) {};
		\node [style=WIDEmorph] (2) at (-0.25, 1.5) {$T_2$};
		\node [style=none] (3) at (-0.75, 1.25) {};
		\node [style=none] (4) at (0, 1.5) {};
		\node [style=none] (5) at (2.5, -1) {};
		\node [style=none] (6) at (0.25, 1.5) {};
		\node [style=morph] (7) at (0.25, -1.75) {$g_2$};
		\node [style=none] (8) at (0.25, -0.5) {};
		\node [style=WIDEmorph] (9) at (0.75, -0.5) {$\putt{}_1$};
		\node [style=none] (10) at (0.25, -0.75) {};
		\node [style=none] (11) at (1.25, -0.5) {};
		\node [style={white_dot}] (12) at (-0.5, -2.5) {};
		\node [style=none] (13) at (2, -2.5) {};
		\node [style=none] (14) at (-0.5, -3.5) {};
		\node [style=none] (15) at (3.5, -3.5) {\texttt{Past Value of V}};
		\node [style=none] (16) at (2.25, -2.5) {};
		\node [style=none] (17) at (3.5, -3) {};
		\node [style=none] (18) at (-0.75, -3) {};
		\node [style=none] (19) at (0, 0.25) {};
		\node [style=none] (20) at (-5.25, 0) {};
		\node [style=none] (21) at (-5.25, 0.5) {$\texttt{Past Value of Intermediate System V'}$};
	\end{pgfonlayer}
	\begin{pgfonlayer}{edgelayer}
		\draw [in=90, out=-90] (0.center) to (1.center);
		\draw [in=90, out=-90] (6.center) to (5.center);
		\draw [style=new edge style 2, in=90, out=-60, looseness=1.25] (11.center) to (13.center);
		\draw [in=90, out=-90] (4.center) to (8.center);
		\draw [in=-90, out=15, looseness=1.25] (12) to (7);
		\draw [in=270, out=90] (7) to (10.center);
		\draw (12) to (14.center);
		\draw [in=150, out=-90, looseness=0.50] (3.center) to (12);
		\draw [style=new edge style 0, in=0, out=-180] (19.center) to (20.center);
		\draw [style=new edge style 0, in=-105, out=-180, looseness=0.75] (18.center) to (20.center);
		\draw [style=new edge style 0, in=90, out=0] (16.center) to (17.center);
	\end{pgfonlayer}
\end{tikzpicture}\]
and the new value of the intermediate system $V'$ after the tagged lens $P_{1}$ has been used to update $V'$:
\[\begin{tikzpicture}[scale=0.7, {every node/.style}={scale=0.7}]
	\begin{pgfonlayer}{nodelayer}
		\node [style=none] (0) at (-2, 3.5) {};
		\node [style=none] (1) at (-2, 2.5) {};
		\node [style=WIDEmorph] (2) at (-1.5, 2.5) {$T_2$};
		\node [style=none] (3) at (-2, 2.25) {};
		\node [style=none] (4) at (-1.25, 2.5) {};
		\node [style=none] (5) at (0, 1.25) {};
		\node [style=none] (6) at (-1, 2.5) {};
		\node [style=WIDEmorph] (7) at (0.5, 1.25) {$\putt{}_1$};
		\node [style=none] (8) at (0, 1) {};
		\node [style=none] (9) at (1, 1.25) {};
		\node [style=none] (10) at (-0.25, 0) {};
		\node [style=WIDEmorph] (11) at (0.25, 0) {$T_1$};
		\node [style=none] (12) at (-0.25, -0.25) {};
		\node [style=none] (13) at (0.5, 0) {};
		\node [style=none] (14) at (0.75, 0) {};
		\node [style=morph] (15) at (-0.75, -1.25) {$g_2$};
		\node [style=none] (16) at (-1.25, 1.5) {};
		\node [style={white_dot}] (17) at (-1.5, -2.25) {};
		\node [style=none] (18) at (1, -2.5) {};
		\node [style={white_dot}] (19) at (2, -1) {};
		\node [style=none] (20) at (-1.5, -3.25) {};
		\node [style=none] (21) at (2, -2.5) {};
		\node [style=none] (22) at (2, 3.25) {\texttt{Value of System V' after update by }$P_1$};
		\node [style=none] (23) at (-0.25, 2) {};
		\node [style=none] (24) at (2, 2.75) {};
	\end{pgfonlayer}
	\begin{pgfonlayer}{edgelayer}
		\draw [in=90, out=-90] (0.center) to (1.center);
		\draw [in=90, out=-90] (6.center) to (5.center);
		\draw [in=90, out=-90, looseness=1.25] (8.center) to (10.center);
		\draw [style=new edge style 2, in=90, out=-90] (13.center) to (18.center);
		\draw [style=new edge style 5] (19) to (21.center);
		\draw [in=135, out=-90, looseness=0.75] (3.center) to (17);
		\draw [in=-90, out=15] (17) to (15);
		\draw [in=90, out=-90, looseness=1.25] (12.center) to (15);
		\draw [in=90, out=-90] (4.center) to (16.center);
		\draw [style=new edge style 5, in=135, out=-90, looseness=0.75] (14.center) to (19);
		\draw [style=new edge style 5, in=90, out=-90] (9.center) to (19);
		\draw (17) to (20.center);
		\draw [style=new edge style 0, in=-180, out=0] (23.center) to (24.center);
	\end{pgfonlayer}
\end{tikzpicture}\]
With the action of the composed tag explained

\section{The Composition of Tagged Lenses is Consistent With Standard Lens Composition}
\begin{prop}
The induced lens construction defines a strict monoidal injective-on-objects functor \[\mathcal{L}:\cat{TL}[\mathcal{C}] \longrightarrow \cat{pgL}[\mathcal{C}]\] from the category $\cat{TL}[\mathcal{C}]$ of tagged lenses over $\mathcal{C}$ into the category $\cat{pgL}[\mathcal{C}]$ of putget lenses over $\mathcal{C}$.
\end{prop}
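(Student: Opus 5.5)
The plan is to check directly the data of a strict monoidal functor: well-definedness of $\mathcal{L}$ on morphisms, preservation of identities, preservation of sequential composition, and preservation of the tensor together with the symmetric monoidal structure on the nose. Injectivity on objects is immediate since $\mathcal{L}(V)=V$. Well-definedness, namely that $L((\putt{},g,T))$ is a morphism of $\cat{pgL}[\mathcal{C}]$, is exactly the earlier proposition that every lens induced by a tagged lens satisfies \textbf{putget}. I would argue in algebraic notation (generalised elements in $\mathcal{C}$), and redraw the chains graphically only at the end. The reason is that the composed tag of the main text is easiest to manipulate once it is written as a term.

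\emph{Identities.} The identity tagged lens has $\putt{}(v,v')=v'$, $g=\mathrm{id}$ and $T(v,v',v'')=v$. Its induced put is therefore $P(v,v')=\putt{}(T(v,v,v'),v')=v'$, which is the identity lens of $\cat{L}[\mathcal{C}]$.

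\emph{Sequential composition.} This is the main step and, I expect, the main obstacle. Fix tagged lenses $(\putt{}_1,g_1,T_1):V\to V'$ and $(\putt{}_2,g_2,T_2):V'\to V''$, with $g_1:V'\to V$ and $g_2:V''\to V'$. Write $s$ for a generalised element of $V''$ and $v$ for one of $V$.
\begin{itemize}
\item The get components agree trivially: both sides have get $g_1\circ g_2$.
\item On the right, $L_2\circ L_1$ has put $s,v\mapsto P_2\bigl(s,P_1(g_2 s,v)\bigr)$, where $P_i(x,w)=\putt{}_i(T_i(x,g_i x,w),w)$.
\item On the left, the put is $\putt{}\bigl(T(s,g_1g_2 s,v),v\bigr)$, with $\putt{}$ the composite put and $T$ the composed tag read off the diagram in the main text.
\end{itemize}
I will expand both sides into terms built only from $\putt{}_i$, $g_i$, $T_i$ and copying, and then rewrite towards a common normal form. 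The tools are:
\begin{itemize}
\item commutation of $T_i$ past $\putt{}_i$;
\item the derived law $g_i\circ T_i=g_i$ proved after the definition of tagged lenses;
\item \textbf{putget}, \textbf{getput} and \textbf{putput} for each underlying vwb-lens $(\putt{}_i,g_i)$.
\end{itemize}

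The delicate point is matching the arguments fed into $T_2$. On the right-hand side the old-view input is $g_2 s$ and the new-view input is the intermediate value $P_1(g_2 s,v)$. On the left-hand side these arise from $g_2$ applied to $\putt{}_1(T_1(\cdot),\cdot)$-shaped subterms after the outer $\putt{}_2$ is pushed past $T_2$. I would first use \textbf{putget} for $(\putt{}_2,g_2)$ to evaluate $g_2$ on an updated state. I would then use $g_1\circ T_1=g_1$ to identify $g_1 g_2 s$ with the old-view input consumed by $T_1$. After that, repeated \textbf{putput} for $\putt{}_2$ should collapse the two nested $\putt{}_2$'s (the lifting part and the outer update) into a single one, namely $\putt{}_2(T_2(s,g_2s,P_1(g_2s,v)),P_1(g_2s,v))$. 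If the copies do not line up directly, I would first apply naturality of copying, since every morphism is a comonoid homomorphism, to duplicate the shared subterm $g_2 s$, and only then apply the lens laws.

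\emph{Monoidal structure.} Both the tensor of tagged lenses and the tensor of lenses act componentwise, and $\mathcal{L}(V\times W)=V\times W$. So for the tensor I only need
\[
L\bigl((\putt{}_1,g_1,T_1)\otimes(\putt{}_2,g_2,T_2)\bigr)=L((\putt{}_1,g_1,T_1))\otimes L((\putt{}_2,g_2,T_2)).
\]
This follows by expanding the induced put of the tensor, $\bigl((s_1,s_2),(v_1,v_2)\bigr)\mapsto\bigl(\putt{}_1(T_1(s_1,g_1s_1,v_1),v_1),\,\putt{}_2(T_2(s_2,g_2s_2,v_2),v_2)\bigr)$. Here I use that copying on a product is the product of copyings, which is one of the cartesian axioms recalled earlier. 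Finally, unit objects, associators and symmetries are identity-like tagged lenses with projection tags, so by the identity computation above they are sent to the corresponding structural lenses. Hence $\mathcal{L}$ is strict symmetric monoidal.
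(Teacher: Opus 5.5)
Your proposal is correct and follows essentially the paper's route: the paper likewise proves functoriality by rewriting the induced put of the composite tagged lens into $P_2(s,P_1(g_2s,v))$, only graphically rather than with generalised elements, and your explicit identity and tensor checks cover parts the paper's appendix leaves implicit. One correction to your narrative of the main step: write $u=T_2\bigl(s,\putt{}_1(g_2s,g_1g_2s),P_1(g_2s,v)\bigr)$ for the inner part of the composed tag; then the laws that do the work are putget for $(\putt{}_2,g_2)$, the derived law $g_2\circ T_2=g_2$ (giving $g_2u=g_2s$), a single putput for $\putt{}_2$, and getput for $(\putt{}_1,g_1)$ to collapse $\putt{}_1(g_2s,g_1g_2s)$ to $g_2s$, while $g_1\circ T_1=g_1$ is never needed because both copies of $T_1$ already receive $g_1g_2s$ as their prior view.
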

\begin{proof}
The functor is defined on objects by $\mathcal{L}(V) := V$ and on morphisms by producing the lens induced by the tagged lens $\mathcal{L}((\putt{},g,T)) := L((\putt{},g,T))$. To prove functoriality it must be shown that $P_{(\putt{},g,T)}$ indeed factors in the following way \[\begin{tikzpicture}[scale=0.7, {every node/.style}={scale=0.7}]
	\begin{pgfonlayer}{nodelayer}
		\node [style=none] (1) at (-2.75, -0.75) {};
		\node [style=WIDEmorph] (9) at (-2.25, 0.25) {$P$};
		\node [style=none] (10) at (-2.75, 0.5) {};
		\node [style=none] (11) at (-2.75, 0) {};
		\node [style=none] (13) at (-1.75, 0.25) {};
		\node [style=none] (15) at (-1.75, -0.75) {};
		\node [style=none] (16) at (-2.75, 1.25) {};
		\node [style=none] (17) at (-0.75, 0) {$=$};
		\node [style=WIDEmorph] (18) at (0.75, 0.75) {$T$};
		\node [style=none] (21) at (1, 0.75) {};
		\node [style=none] (22) at (1.25, 0.75) {};
		\node [style=morph] (23) at (1, -0.5) {$g$};
		\node [style=WIDEmorph] (26) at (1.5, 2) {$\putt{}$};
		\node [style=none] (27) at (0.25, 1) {};
		\node [style=none] (28) at (0.25, 0.5) {};
		\node [style={white_dot}] (30) at (2.25, -0.75) {};
		\node [style=none] (31) at (1, 1.75) {};
		\node [style=none] (53) at (2, 2) {};
		\node [style=none] (54) at (2.25, -2.25) {};
		\node [style={white_dot}] (55) at (0.5, -1.25) {};
		\node [style=none] (56) at (0.5, -2.25) {};
		\node [style=none] (57) at (1, 3) {};
		\node [style=none] (58) at (1, 2.25) {};
	\end{pgfonlayer}
	\begin{pgfonlayer}{edgelayer}
		\draw [in=90, out=-90] (13.center) to (15.center);
		\draw (10.center) to (16.center);
		\draw [in=-90, out=90, looseness=1.25] (1.center) to (11.center);
		\draw [in=90, out=-90] (21.center) to (23);
		\draw [in=270, out=90] (27.center) to (31.center);
		\draw [in=30, out=-90, looseness=0.50] (53.center) to (30);
		\draw [in=150, out=-75, looseness=0.75] (22.center) to (30);
		\draw [in=120, out=-90, looseness=0.75] (28.center) to (55);
		\draw (55) to (56.center);
		\draw [in=-90, out=0] (55) to (23);
		\draw (57.center) to (58.center);
		\draw (30) to (54.center);
	\end{pgfonlayer}
\end{tikzpicture}\]
and that $T$ is indeed a lens tag for $\putt{}$. Firstly by definition $P$ is
\[\begin{tikzpicture}[scale=0.5, {every node/.style}={scale=0.5}]
	\begin{pgfonlayer}{nodelayer}
		\node [style=none] (109) at (-10.25, 3.5) {};
		\node [style=WIDEmorph] (110) at (-9.75, 4.5) {$P$};
		\node [style=none] (111) at (-10.25, 4.75) {};
		\node [style=none] (112) at (-10.25, 4.25) {};
		\node [style=none] (113) at (-9.25, 4.5) {};
		\node [style=none] (114) at (-9.25, 3.5) {};
		\node [style=none] (115) at (-10.25, 5.5) {};
		\node [style=none] (116) at (-8.25, 4.25) {$=$};
		\node [style=WIDEmorph] (117) at (-5.75, 4.5) {$P_1$};
		\node [style=none] (118) at (-6.25, 4.25) {};
		\node [style=morph] (119) at (-6.25, 3.5) {$g_2$};
		\node [style=WIDEmorph] (120) at (-6.75, 5.75) {$P_2$};
		\node [style=none] (121) at (-7.25, 5.5) {};
		\node [style=none] (122) at (-6.25, 5.75) {};
		\node [style=none] (123) at (-5.25, 2.75) {};
		\node [style={white_dot}] (124) at (-7, 2.75) {};
		\node [style=none] (125) at (-7, 2) {};
		\node [style=none] (126) at (-7.25, 6) {};
		\node [style=none] (127) at (-7.25, 6.75) {};
		\node [style=none] (128) at (-6.25, 4.75) {};
		\node [style=none] (129) at (-5.25, 4.5) {};
		\node [style=none] (130) at (-4.25, 4.25) {$=$};
		\node [style=WIDEmorph] (131) at (-1.5, 3.25) {$P_1$};
		\node [style=none] (132) at (-2, 3) {};
		\node [style=morph] (133) at (-2, 2.25) {$g_2$};
		\node [style=none] (137) at (-1, 1.5) {};
		\node [style={white_dot}] (138) at (-2.75, 1.5) {};
		\node [style=none] (139) at (-2.75, 0.75) {};
		\node [style=none] (143) at (-1, 3.25) {};
		\node [style=WIDEmorph] (144) at (-2.75, 6.75) {$T_2$};
		\node [style=none] (145) at (-2.5, 6.75) {};
		\node [style=none] (146) at (-2.25, 6.75) {};
		\node [style=morph] (147) at (-2.5, 5.5) {$g_2$};
		\node [style=WIDEmorph] (148) at (-2, 8) {$\putt{}_2$};
		\node [style=none] (149) at (-3.25, 7) {};
		\node [style=none] (150) at (-3.25, 6.5) {};
		\node [style={white_dot}] (151) at (-1.25, 5.25) {};
		\node [style=none] (152) at (-2.5, 7.75) {};
		\node [style=none] (153) at (-1.5, 8) {};
		\node [style=none] (154) at (-2, 3.5) {};
		\node [style={white_dot}] (155) at (-3, 4.75) {};
		\node [style=none] (157) at (-2.5, 9) {};
		\node [style=none] (158) at (-2.5, 8.25) {};
		\node [style=none] (159) at (0, 4.25) {$=$};
		\node [style=morph] (162) at (2.25, 0.5) {$g_2$};
		\node [style={white_dot}] (164) at (1.5, -0.25) {};
		\node [style=none] (165) at (1.5, -1) {};
		\node [style=WIDEmorph] (167) at (1.25, 7.5) {$T_2$};
		\node [style=none] (168) at (1.5, 7.5) {};
		\node [style=none] (169) at (1.75, 7.5) {};
		\node [style=morph] (170) at (1.5, 6.25) {$g_2$};
		\node [style=WIDEmorph] (171) at (2, 8.75) {$\putt{}_2$};
		\node [style=none] (172) at (0.75, 7.75) {};
		\node [style=none] (173) at (0.75, 7.25) {};
		\node [style={white_dot}] (174) at (2.75, 6) {};
		\node [style=none] (175) at (1.5, 8.5) {};
		\node [style=none] (176) at (2.5, 8.75) {};
		\node [style={white_dot}] (178) at (1, 5.5) {};
		\node [style=none] (179) at (1.5, 9.75) {};
		\node [style=none] (180) at (1.5, 9) {};
		\node [style=WIDEmorph] (181) at (2.5, 3.5) {$T_1$};
		\node [style=none] (182) at (2.75, 3.5) {};
		\node [style=none] (183) at (3, 3.5) {};
		\node [style=morph] (184) at (2.75, 2.25) {$g_1$};
		\node [style=WIDEmorph] (185) at (3.25, 4.75) {$\putt{}_1$};
		\node [style=none] (186) at (2, 3.75) {};
		\node [style=none] (187) at (2, 3.25) {};
		\node [style={white_dot}] (188) at (4, 2) {};
		\node [style=none] (189) at (2.75, 4.5) {};
		\node [style=none] (190) at (3.75, 4.75) {};
		\node [style=none] (191) at (4, 0.5) {};
		\node [style={white_dot}] (192) at (2.25, 1.5) {};
		\node [style=none] (195) at (2.75, 5) {};
	\end{pgfonlayer}
	\begin{pgfonlayer}{edgelayer}
		\draw [in=90, out=-90] (113.center) to (114.center);
		\draw (111.center) to (115.center);
		\draw [in=-90, out=90, looseness=1.25] (109.center) to (112.center);
		\draw [in=90, out=-90] (118.center) to (119);
		\draw [in=135, out=-90, looseness=0.75] (121.center) to (124);
		\draw (124) to (125.center);
		\draw [in=-90, out=0] (124) to (119);
		\draw (126.center) to (127.center);
		\draw [in=90, out=-90, looseness=1.25] (122.center) to (128.center);
		\draw [in=90, out=-90] (129.center) to (123.center);
		\draw [in=90, out=-90] (132.center) to (133);
		\draw (138) to (139.center);
		\draw [in=-90, out=0] (138) to (133);
		\draw [in=90, out=-90] (143.center) to (137.center);
		\draw [in=90, out=-90] (145.center) to (147);
		\draw [in=270, out=90] (149.center) to (152.center);
		\draw [in=30, out=-90, looseness=0.50] (153.center) to (151);
		\draw [in=150, out=-75, looseness=0.75] (146.center) to (151);
		\draw [in=120, out=-90, looseness=0.75] (150.center) to (155);
		\draw [in=-90, out=0] (155) to (147);
		\draw (157.center) to (158.center);
		\draw [in=90, out=-90] (151) to (154.center);
		\draw [in=90, out=-90, looseness=0.50] (155) to (138);
		\draw (164) to (165.center);
		\draw [in=-90, out=0] (164) to (162);
		\draw [in=90, out=-90] (168.center) to (170);
		\draw [in=270, out=90] (172.center) to (175.center);
		\draw [in=30, out=-90, looseness=0.50] (176.center) to (174);
		\draw [in=150, out=-75, looseness=0.75] (169.center) to (174);
		\draw [in=120, out=-90, looseness=0.75] (173.center) to (178);
		\draw [in=-90, out=0] (178) to (170);
		\draw (179.center) to (180.center);
		\draw [in=120, out=-90, looseness=0.75] (178) to (164);
		\draw [in=90, out=-90] (182.center) to (184);
		\draw [in=270, out=90] (186.center) to (189.center);
		\draw [in=30, out=-90, looseness=0.50] (190.center) to (188);
		\draw [in=150, out=-75, looseness=0.75] (183.center) to (188);
		\draw [in=120, out=-90, looseness=0.75] (187.center) to (192);
		\draw [in=-90, out=0] (192) to (184);
		\draw [in=90, out=-90] (188) to (191.center);
		\draw [in=90, out=-90] (174) to (195.center);
		\draw [in=90, out=-90] (192) to (162);
	\end{pgfonlayer}
\end{tikzpicture}\]
we want to show the above expression is equivalent to
\[\begin{tikzpicture}[scale=0.5, {every node/.style}={scale=0.5}]
	\begin{pgfonlayer}{nodelayer}
		\node [style=none] (17) at (3.5, 0) {$=$};
		\node [style=WIDEmorph] (18) at (0.75, 0.75) {$T$};
		\node [style=none] (21) at (1, 0.75) {};
		\node [style=none] (22) at (1.25, 0.75) {};
		\node [style=morph] (23) at (1, -0.5) {$g$};
		\node [style=WIDEmorph] (26) at (1.5, 2) {$\putt{}$};
		\node [style=none] (27) at (0.25, 1) {};
		\node [style=none] (28) at (0.25, 0.5) {};
		\node [style={white_dot}] (30) at (2.25, -0.75) {};
		\node [style=none] (31) at (1, 1.75) {};
		\node [style=none] (53) at (2, 2) {};
		\node [style=none] (54) at (2.25, -2.25) {};
		\node [style={white_dot}] (55) at (0.5, -1.25) {};
		\node [style=none] (56) at (0.5, -2.25) {};
		\node [style=none] (57) at (1, 3) {};
		\node [style=none] (58) at (1, 2.25) {};
		\node [style=WIDEmorph] (59) at (7.25, 7.5) {$P_1$};
		\node [style=none] (60) at (6.75, 7.25) {};
		\node [style=morph] (61) at (6.75, 6.5) {$g_2$};
		\node [style=WIDEmorph] (62) at (6.25, 8.75) {$\putt{}_2$};
		\node [style=none] (63) at (5.75, 8.5) {};
		\node [style=none] (64) at (6.75, 8.75) {};
		\node [style={white_dot}] (65) at (11.5, -7.5) {};
		\node [style={white_dot}] (66) at (6, 5.75) {};
		\node [style=none] (67) at (6, 5) {};
		\node [style=none] (68) at (5.75, 9) {};
		\node [style=none] (69) at (5.75, 9.75) {};
		\node [style=none] (70) at (6.75, 7.75) {};
		\node [style=none] (71) at (7.75, 7.5) {};
		\node [style=WIDEmorph] (72) at (7.5, 3.5) {$T_1$};
		\node [style=none] (73) at (7, 3.25) {};
		\node [style=morph] (74) at (7, 2.5) {$g_2$};
		\node [style=WIDEmorph] (75) at (6.5, 4.75) {$\putt{}_2$};
		\node [style=none] (76) at (6, 4.5) {};
		\node [style=none] (77) at (7, 4.75) {};
		\node [style={white_dot}] (78) at (6.25, 1.75) {};
		\node [style=none] (79) at (4.75, 0.25) {};
		\node [style=none] (82) at (7, 3.75) {};
		\node [style=none] (83) at (7.75, 3.5) {};
		\node [style=none] (84) at (8, 3.5) {};
		\node [style=WIDEmorph] (85) at (5.25, 0.25) {$T_2$};
		\node [style=none] (86) at (4.75, 0) {};
		\node [style=none] (87) at (5.5, 0.25) {};
		\node [style=none] (88) at (6.75, -0.75) {};
		\node [style=none] (89) at (5.75, 0.25) {};
		\node [style=WIDEmorph] (90) at (7.25, -0.75) {$\putt{}_1$};
		\node [style=none] (91) at (6.75, -1) {};
		\node [style=none] (92) at (7.75, -0.75) {};
		\node [style=none] (93) at (6.5, -2) {};
		\node [style=WIDEmorph] (94) at (7, -2) {$T_1$};
		\node [style=none] (95) at (6.5, -2.25) {};
		\node [style=none] (96) at (7.25, -2) {};
		\node [style=none] (97) at (7.5, -2) {};
		\node [style=morph] (98) at (5.5, -3.25) {$g_2$};
		\node [style=morph] (99) at (6.25, -6) {$g_2$};
		\node [style=none] (100) at (6.25, -4.75) {};
		\node [style=WIDEmorph] (101) at (6.75, -4.75) {$\putt{}_1$};
		\node [style=none] (102) at (6.25, -5) {};
		\node [style=none] (103) at (7.25, -4.75) {};
		\node [style={white_dot}] (104) at (5, -5.25) {};
		\node [style={white_dot}] (105) at (5.5, -6.75) {};
		\node [style={white_dot}] (106) at (8, -6) {};
		\node [style={white_dot}] (107) at (9.5, -6) {};
		\node [style=none] (110) at (11.5, -8.5) {};
		\node [style=morph] (111) at (7.25, -7) {$g_1$};
		\node [style=morph] (113) at (7.25, -8) {$g_2$};
		\node [style={white_dot}] (114) at (6.25, -9.25) {};
		\node [style=none] (115) at (6.25, -10) {};
	\end{pgfonlayer}
	\begin{pgfonlayer}{edgelayer}
		\draw [in=90, out=-90] (21.center) to (23);
		\draw [in=270, out=90] (27.center) to (31.center);
		\draw [in=30, out=-90, looseness=0.50] (53.center) to (30);
		\draw [in=150, out=-75, looseness=0.75] (22.center) to (30);
		\draw [in=120, out=-90, looseness=0.75] (28.center) to (55);
		\draw (55) to (56.center);
		\draw [in=-90, out=0] (55) to (23);
		\draw (57.center) to (58.center);
		\draw (30) to (54.center);
		\draw [in=90, out=-90] (60.center) to (61);
		\draw [in=135, out=-90, looseness=0.75] (63.center) to (66);
		\draw [in=90, out=-90] (66) to (67.center);
		\draw [in=-90, out=0] (66) to (61);
		\draw (68.center) to (69.center);
		\draw [in=90, out=-90, looseness=1.25] (64.center) to (70.center);
		\draw [in=90, out=-90] (71.center) to (65);
		\draw [in=90, out=-90] (73.center) to (74);
		\draw [in=135, out=-90, looseness=0.75] (76.center) to (78);
		\draw [in=90, out=-90] (78) to (79.center);
		\draw [in=-90, out=0] (78) to (74);
		\draw [in=90, out=-90, looseness=1.25] (77.center) to (82.center);
		\draw [in=90, out=-90] (89.center) to (88.center);
		\draw [in=90, out=-90, looseness=1.25] (91.center) to (93.center);
		\draw [style=new edge style 2, in=30, out=-90, looseness=1.25] (83.center) to (106);
		\draw [style=new edge style 2, in=105, out=-75] (96.center) to (106);
		\draw [style=new edge style 2, in=135, out=-60] (103.center) to (106);
		\draw [in=135, out=-90, looseness=0.75] (86.center) to (104);
		\draw [in=-90, out=15] (104) to (98);
		\draw [in=90, out=-90, looseness=1.25] (95.center) to (98);
		\draw [in=90, out=-90] (87.center) to (100.center);
		\draw [in=150, out=-90] (104) to (105);
		\draw [in=-90, out=15, looseness=1.25] (105) to (99);
		\draw [in=270, out=90] (99) to (102.center);
		\draw [style=new edge style 5, in=135, out=-90, looseness=0.75] (97.center) to (107);
		\draw [style=new edge style 5, in=285, out=60] (107) to (84.center);
		\draw [style=new edge style 5, in=90, out=-90] (92.center) to (107);
		\draw [style=new edge style 5, in=-180, out=-90, looseness=1.25] (107) to (65);
		\draw (65) to (110.center);
		\draw (111) to (113);
		\draw [in=270, out=0, looseness=1.25] (114) to (113);
		\draw [style=new edge style 2, in=90, out=-90] (106) to (111);
		\draw [in=165, out=-75] (105) to (114);
		\draw (114) to (115.center);
	\end{pgfonlayer}
\end{tikzpicture}\]
Indeed this can be demonstrated by the following steps:
\[\tikzfigscale{0.87}{tagfigs/catproof4a}\]
\[\begin{tikzpicture}[scale=0.5, {every node/.style}={scale=0.5}]
	\begin{pgfonlayer}{nodelayer}
		\node [style=none] (0) at (-9.75, 0.25) {$=$};
		\node [style=WIDEmorph] (1) at (-6, 7.75) {$P_1$};
		\node [style=WIDEmorph] (2) at (-7, 9) {$\putt{}_2$};
		\node [style=none] (3) at (-6.5, 9) {};
		\node [style={white_dot}] (4) at (-1, -7.75) {};
		\node [style=none] (5) at (-7.5, 9.25) {};
		\node [style=none] (6) at (-7.5, 10) {};
		\node [style=none] (7) at (-6.5, 8) {};
		\node [style=none] (8) at (-5.5, 7.75) {};
		\node [style=WIDEmorph] (9) at (-5.25, 3.25) {$T_1$};
		\node [style=none] (10) at (-5.75, 3) {};
		\node [style=morph] (11) at (-5.75, 2.25) {$g_2$};
		\node [style={white_dot}] (12) at (-6.5, 4.25) {};
		\node [style={white_dot}] (13) at (-6.5, 1.5) {};
		\node [style=none] (14) at (-8, 0) {};
		\node [style=none] (15) at (-5.75, 3.5) {};
		\node [style=none] (16) at (-5, 3.25) {};
		\node [style=none] (17) at (-4.75, 3.25) {};
		\node [style=WIDEmorph] (18) at (-7.5, 0) {$T_2$};
		\node [style=none] (19) at (-8, -0.25) {};
		\node [style=none] (20) at (-7.25, 0) {};
		\node [style=none] (21) at (-6, -1) {};
		\node [style=none] (22) at (-7, 0) {};
		\node [style=WIDEmorph] (23) at (-5.5, -1) {$\putt{}_1$};
		\node [style=none] (24) at (-6, -1.25) {};
		\node [style=none] (25) at (-5, -1) {};
		\node [style=none] (26) at (-6.25, -2.25) {};
		\node [style=WIDEmorph] (27) at (-5.75, -2.25) {$T_1$};
		\node [style=none] (28) at (-6.25, -2.5) {};
		\node [style=none] (29) at (-5.5, -2.25) {};
		\node [style=none] (30) at (-5.25, -2.25) {};
		\node [style=morph] (31) at (-7.25, -3.5) {$g_2$};
		\node [style=morph] (32) at (-6.5, -6.25) {$g_2$};
		\node [style=none] (33) at (-6.5, -5) {};
		\node [style=WIDEmorph] (34) at (-6, -5) {$\putt{}_1$};
		\node [style=none] (35) at (-6.5, -5.25) {};
		\node [style=none] (36) at (-5.5, -5) {};
		\node [style={white_dot}] (37) at (-7.75, -5.5) {};
		\node [style={white_dot}] (38) at (-7.25, -7) {};
		\node [style={white_dot}] (39) at (-4.75, -6.25) {};
		\node [style={white_dot}] (40) at (-3.25, -6.25) {};
		\node [style=none] (41) at (-1, -8.75) {};
		\node [style=morph] (42) at (-5.5, -7.25) {$g_1$};
		\node [style=morph] (43) at (-5.5, -8.25) {$g_2$};
		\node [style={white_dot}] (44) at (-6.5, -9.5) {};
		\node [style=none] (45) at (-6.5, -10.25) {};
		\node [style=none] (46) at (-6.5, 7.5) {};
		\node [style={white_dot}] (47) at (-7.25, 6.5) {};
		\node [style=none] (48) at (-7.5, 8.75) {};
		\node [style=none] (49) at (-0.75, 0) {$=$};
		\node [style=WIDEmorph] (50) at (4, 6.5) {$P_1$};
		\node [style=WIDEmorph] (51) at (3, 7.75) {$\putt{}_2$};
		\node [style=none] (52) at (3.5, 7.75) {};
		\node [style={white_dot}] (53) at (7.75, -6.5) {};
		\node [style=none] (54) at (2.5, 8) {};
		\node [style=none] (55) at (2.5, 8.75) {};
		\node [style=none] (56) at (3.5, 6.75) {};
		\node [style=none] (57) at (4.5, 6.5) {};
		\node [style=WIDEmorph] (58) at (3.5, 4.5) {$T_1$};
		\node [style=none] (59) at (3, 4.25) {};
		\node [style=morph] (60) at (3, 3.5) {$g_2$};
		\node [style={white_dot}] (61) at (2.25, 2.75) {};
		\node [style=none] (62) at (0.75, 1.25) {};
		\node [style=none] (63) at (3, 4.75) {};
		\node [style=none] (64) at (3.75, 4.5) {};
		\node [style=none] (65) at (4, 4.5) {};
		\node [style=WIDEmorph] (66) at (1.25, 1.25) {$T_2$};
		\node [style=none] (67) at (0.75, 1) {};
		\node [style=none] (68) at (1.5, 1.25) {};
		\node [style=none] (69) at (2.75, 0.25) {};
		\node [style=none] (70) at (1.75, 1.25) {};
		\node [style=WIDEmorph] (71) at (3.25, 0.25) {$\putt{}_1$};
		\node [style=none] (72) at (2.75, 0) {};
		\node [style=none] (73) at (3.75, 0.25) {};
		\node [style=none] (74) at (2.5, -1) {};
		\node [style=WIDEmorph] (75) at (3, -1) {$T_1$};
		\node [style=none] (76) at (2.5, -1.25) {};
		\node [style=none] (77) at (3.25, -1) {};
		\node [style=none] (78) at (3.5, -1) {};
		\node [style=morph] (79) at (1.5, -2.25) {$g_2$};
		\node [style=morph] (80) at (2.25, -5) {$g_2$};
		\node [style=none] (81) at (2.25, -3.75) {};
		\node [style=WIDEmorph] (82) at (2.75, -3.75) {$\putt{}_1$};
		\node [style=none] (83) at (2.25, -4) {};
		\node [style=none] (84) at (3.25, -3.75) {};
		\node [style={white_dot}] (85) at (1, -4.25) {};
		\node [style={white_dot}] (86) at (1.5, -5.75) {};
		\node [style={white_dot}] (87) at (4, -5) {};
		\node [style={white_dot}] (88) at (5.5, -5) {};
		\node [style=none] (89) at (7.75, -7.5) {};
		\node [style=morph] (90) at (3.25, -6) {$g_1$};
		\node [style=morph] (91) at (3.25, -7) {$g_2$};
		\node [style={white_dot}] (92) at (2.25, -8.25) {};
		\node [style=none] (93) at (2.25, -9) {};
		\node [style=none] (94) at (3.5, 6.25) {};
		\node [style=none] (95) at (2.5, 7.5) {};
	\end{pgfonlayer}
	\begin{pgfonlayer}{edgelayer}
		\draw (5.center) to (6.center);
		\draw [in=90, out=-90, looseness=1.25] (3.center) to (7.center);
		\draw [in=90, out=-90, looseness=0.75] (8.center) to (4);
		\draw [in=90, out=-90] (10.center) to (11);
		\draw [in=90, out=-90] (13) to (14.center);
		\draw [in=-90, out=0] (13) to (11);
		\draw [in=90, out=-90, looseness=1.25] (12) to (15.center);
		\draw [in=90, out=-90] (22.center) to (21.center);
		\draw [in=90, out=-90, looseness=1.25] (24.center) to (26.center);
		\draw [style=new edge style 2, in=30, out=-90, looseness=1.25] (16.center) to (39);
		\draw [style=new edge style 2, in=105, out=-75] (29.center) to (39);
		\draw [style=new edge style 2, in=135, out=-60] (36.center) to (39);
		\draw [in=135, out=-90, looseness=0.75] (19.center) to (37);
		\draw [in=-90, out=15] (37) to (31);
		\draw [in=90, out=-90, looseness=1.25] (28.center) to (31);
		\draw [in=90, out=-90] (20.center) to (33.center);
		\draw [in=150, out=-90] (37) to (38);
		\draw [in=-90, out=15, looseness=1.25] (38) to (32);
		\draw [in=270, out=90] (32) to (35.center);
		\draw [style=new edge style 5, in=135, out=-90, looseness=0.75] (30.center) to (40);
		\draw [style=new edge style 5, in=285, out=60] (40) to (17.center);
		\draw [style=new edge style 5, in=90, out=-90] (25.center) to (40);
		\draw [style=new edge style 5, in=-180, out=-90, looseness=1.25] (40) to (4);
		\draw (4) to (41.center);
		\draw (42) to (43);
		\draw [in=270, out=0, looseness=1.25] (44) to (43);
		\draw [style=new edge style 2, in=90, out=-90] (39) to (42);
		\draw [in=165, out=-75] (38) to (44);
		\draw (44) to (45.center);
		\draw [in=150, out=-90, looseness=1.25] (47) to (12);
		\draw [in=-90, out=30] (12) to (46.center);
		\draw [in=150, out=-90, looseness=0.75] (48.center) to (13);
		\draw (54.center) to (55.center);
		\draw [in=90, out=-90, looseness=1.25] (52.center) to (56.center);
		\draw [in=90, out=-90, looseness=0.75] (57.center) to (53);
		\draw [in=90, out=-90] (59.center) to (60);
		\draw [in=90, out=-90] (61) to (62.center);
		\draw [in=-90, out=0] (61) to (60);
		\draw [in=90, out=-90] (70.center) to (69.center);
		\draw [in=90, out=-90, looseness=1.25] (72.center) to (74.center);
		\draw [style=new edge style 2, in=30, out=-90, looseness=1.25] (64.center) to (87);
		\draw [style=new edge style 2, in=105, out=-75] (77.center) to (87);
		\draw [style=new edge style 2, in=135, out=-60] (84.center) to (87);
		\draw [in=135, out=-90, looseness=0.75] (67.center) to (85);
		\draw [in=-90, out=15] (85) to (79);
		\draw [in=90, out=-90, looseness=1.25] (76.center) to (79);
		\draw [in=90, out=-90] (68.center) to (81.center);
		\draw [in=150, out=-90] (85) to (86);
		\draw [in=-90, out=15, looseness=1.25] (86) to (80);
		\draw [in=270, out=90] (80) to (83.center);
		\draw [style=new edge style 5, in=135, out=-90, looseness=0.75] (78.center) to (88);
		\draw [style=new edge style 5, in=285, out=60] (88) to (65.center);
		\draw [style=new edge style 5, in=90, out=-90] (73.center) to (88);
		\draw [style=new edge style 5, in=-180, out=-90, looseness=1.25] (88) to (53);
		\draw (53) to (89.center);
		\draw (90) to (91);
		\draw [in=270, out=0, looseness=1.25] (92) to (91);
		\draw [style=new edge style 2, in=90, out=-90] (87) to (90);
		\draw [in=165, out=-75] (86) to (92);
		\draw (92) to (93.center);
		\draw [in=150, out=-90, looseness=0.75] (95.center) to (61);
		\draw [in=90, out=-90] (94.center) to (63.center);
	\end{pgfonlayer}
\end{tikzpicture}\]
\[\begin{tikzpicture}[scale=0.5, {every node/.style}={scale=0.5}]
	\begin{pgfonlayer}{nodelayer}
		\node [style=none] (45) at (-6.5, -10.25) {};
		\node [style=none] (96) at (-8.25, 0) {$=$};
		\node [style=WIDEmorph] (97) at (-3.5, 6.5) {$P_1$};
		\node [style=WIDEmorph] (98) at (-4.5, 7.75) {$\putt{}_2$};
		\node [style=none] (99) at (-4, 7.75) {};
		\node [style={white_dot}] (100) at (0.25, -6.5) {};
		\node [style=none] (101) at (-5, 8) {};
		\node [style=none] (102) at (-5, 8.75) {};
		\node [style=none] (103) at (-4, 6.75) {};
		\node [style=none] (104) at (-3, 6.5) {};
		\node [style=WIDEmorph] (105) at (-4, 4.5) {$T_1$};
		\node [style=none] (106) at (-4.5, 4.25) {};
		\node [style=morph] (107) at (-4.5, 3.5) {$g_2$};
		\node [style=none] (108) at (-4.5, 4.75) {};
		\node [style=none] (109) at (-3.75, 4.5) {};
		\node [style=none] (110) at (-3.5, 4.5) {};
		\node [style={white_dot}] (111) at (-6.75, 1) {};
		\node [style={white_dot}] (112) at (-6, 1) {};
		\node [style=none] (113) at (-4.75, 0.25) {};
		\node [style={white_dot}] (114) at (-5, 1) {};
		\node [style=WIDEmorph] (115) at (-4.25, 0.25) {$\putt{}_1$};
		\node [style=none] (116) at (-4.75, 0) {};
		\node [style=none] (117) at (-3.75, 0.25) {};
		\node [style=none] (118) at (-5, -1) {};
		\node [style=WIDEmorph] (119) at (-4.5, -1) {$T_1$};
		\node [style=none] (120) at (-5, -1.25) {};
		\node [style=none] (121) at (-4.25, -1) {};
		\node [style=none] (122) at (-4, -1) {};
		\node [style=morph] (123) at (-6, -2.25) {$g_2$};
		\node [style=morph] (124) at (-5.25, -5) {$g_2$};
		\node [style=none] (125) at (-5.25, -3.75) {};
		\node [style=WIDEmorph] (126) at (-4.75, -3.75) {$\putt{}_1$};
		\node [style=none] (127) at (-5.25, -4) {};
		\node [style=none] (128) at (-4.25, -3.75) {};
		\node [style={white_dot}] (129) at (-6.5, -4.25) {};
		\node [style={white_dot}] (130) at (-6, -5.75) {};
		\node [style={white_dot}] (131) at (-3.5, -5) {};
		\node [style={white_dot}] (132) at (-2, -5) {};
		\node [style=none] (133) at (0.25, -7.5) {};
		\node [style=morph] (134) at (-4.25, -6) {$g_1$};
		\node [style=morph] (135) at (-4.25, -7) {$g_2$};
		\node [style={white_dot}] (136) at (-5.25, -8.25) {};
		\node [style=none] (137) at (-5.25, -9) {};
		\node [style=none] (138) at (-4, 6.25) {};
		\node [style=none] (139) at (-5, 7.5) {};
		\node [style=none] (140) at (-7, 2.5) {};
		\node [style=WIDEmorph] (141) at (-6.5, 2.5) {$T_2$};
		\node [style=none] (142) at (-7, 2.25) {};
		\node [style=none] (143) at (-6.25, 2.5) {};
		\node [style=none] (144) at (-6, 2.5) {};
		\node [style=none] (145) at (-5, 2.5) {};
		\node [style=WIDEmorph] (146) at (-4.5, 2.5) {$T_2$};
		\node [style=none] (147) at (-5, 2.25) {};
		\node [style=none] (148) at (-4.25, 2.5) {};
		\node [style=none] (149) at (-4, 2.5) {};
		\node [style=none] (150) at (1.25, 0) {$=$};
		\node [style=WIDEmorph] (151) at (6, 6.5) {$P_1$};
		\node [style=WIDEmorph] (152) at (5, 7.75) {$\putt{}_2$};
		\node [style=none] (153) at (5.5, 7.75) {};
		\node [style={white_dot}] (154) at (9.75, -6.5) {};
		\node [style=none] (155) at (4.5, 8) {};
		\node [style=none] (156) at (4.5, 8.75) {};
		\node [style=none] (157) at (5.5, 6.75) {};
		\node [style=none] (158) at (6.5, 6.5) {};
		\node [style=WIDEmorph] (159) at (5.5, 4.5) {$T_1$};
		\node [style=none] (160) at (5, 4.25) {};
		\node [style=morph] (161) at (5, 3) {$g_2$};
		\node [style=none] (162) at (5, 4.75) {};
		\node [style=none] (163) at (5.75, 4.5) {};
		\node [style=none] (164) at (6, 4.5) {};
		\node [style={white_dot}] (165) at (2.75, 1) {};
		\node [style=none] (166) at (4.75, 0.25) {};
		\node [style=WIDEmorph] (167) at (5.25, 0.25) {$\putt{}_1$};
		\node [style=none] (168) at (4.75, 0) {};
		\node [style=none] (169) at (5.75, 0.25) {};
		\node [style=none] (170) at (4.5, -1) {};
		\node [style=WIDEmorph] (171) at (5, -1) {$T_1$};
		\node [style=none] (172) at (4.5, -1.25) {};
		\node [style=none] (173) at (5.25, -1) {};
		\node [style=none] (174) at (5.5, -1) {};
		\node [style=morph] (175) at (3.5, -2.25) {$g_2$};
		\node [style=morph] (176) at (4.25, -5) {$g_2$};
		\node [style=none] (177) at (4.25, -3.75) {};
		\node [style=WIDEmorph] (178) at (4.75, -3.75) {$\putt{}_1$};
		\node [style=none] (179) at (4.25, -4) {};
		\node [style=none] (180) at (5.25, -3.75) {};
		\node [style={white_dot}] (181) at (3, -4.25) {};
		\node [style={white_dot}] (182) at (3.5, -5.75) {};
		\node [style={white_dot}] (183) at (6, -5) {};
		\node [style={white_dot}] (184) at (7.5, -5) {};
		\node [style=none] (185) at (9.75, -7.5) {};
		\node [style=morph] (186) at (5.25, -6) {$g_1$};
		\node [style=morph] (187) at (5.25, -7) {$g_2$};
		\node [style={white_dot}] (188) at (4.25, -8.25) {};
		\node [style=none] (189) at (4.25, -9) {};
		\node [style=none] (190) at (5.5, 6.25) {};
		\node [style=none] (191) at (4.5, 7.5) {};
		\node [style=none] (192) at (2.5, 2.5) {};
		\node [style=WIDEmorph] (193) at (3, 2.5) {$T_2$};
		\node [style=none] (194) at (2.5, 2.25) {};
		\node [style=none] (195) at (3.25, 2.5) {};
		\node [style=none] (196) at (3.5, 2.5) {};
	\end{pgfonlayer}
	\begin{pgfonlayer}{edgelayer}
		\draw (101.center) to (102.center);
		\draw [in=90, out=-90, looseness=1.25] (99.center) to (103.center);
		\draw [in=90, out=-90, looseness=0.75] (104.center) to (100);
		\draw [in=90, out=-90] (106.center) to (107);
		\draw [in=90, out=-90] (114) to (113.center);
		\draw [in=90, out=-90, looseness=1.25] (116.center) to (118.center);
		\draw [style=new edge style 2, in=30, out=-90, looseness=1.25] (109.center) to (131);
		\draw [style=new edge style 2, in=105, out=-75] (121.center) to (131);
		\draw [style=new edge style 2, in=135, out=-60] (128.center) to (131);
		\draw [in=135, out=-90, looseness=0.75] (111) to (129);
		\draw [in=-90, out=15] (129) to (123);
		\draw [in=90, out=-90, looseness=1.25] (120.center) to (123);
		\draw [in=90, out=-90] (112) to (125.center);
		\draw [in=150, out=-90] (129) to (130);
		\draw [in=-90, out=15, looseness=1.25] (130) to (124);
		\draw [in=270, out=90] (124) to (127.center);
		\draw [style=new edge style 5, in=135, out=-90, looseness=0.75] (122.center) to (132);
		\draw [style=new edge style 5, in=285, out=60] (132) to (110.center);
		\draw [style=new edge style 5, in=90, out=-90] (117.center) to (132);
		\draw [style=new edge style 5, in=-180, out=-90, looseness=1.25] (132) to (100);
		\draw (100) to (133.center);
		\draw (134) to (135);
		\draw [in=270, out=0, looseness=1.25] (136) to (135);
		\draw [style=new edge style 2, in=90, out=-90] (131) to (134);
		\draw [in=165, out=-75] (130) to (136);
		\draw (136) to (137.center);
		\draw [in=90, out=-90] (138.center) to (108.center);
		\draw [in=105, out=-90] (142.center) to (111);
		\draw [in=-90, out=30, looseness=0.75] (111) to (147.center);
		\draw [in=105, out=-90] (143.center) to (112);
		\draw [in=-90, out=45] (112) to (148.center);
		\draw [in=60, out=-90] (149.center) to (114);
		\draw [in=-90, out=120] (114) to (144.center);
		\draw [in=-90, out=90] (145.center) to (107);
		\draw [in=90, out=-90] (139.center) to (140.center);
		\draw (155.center) to (156.center);
		\draw [in=90, out=-90, looseness=1.25] (153.center) to (157.center);
		\draw [in=90, out=-90, looseness=0.75] (158.center) to (154);
		\draw [in=90, out=-90] (160.center) to (161);
		\draw [in=90, out=-90, looseness=1.25] (168.center) to (170.center);
		\draw [style=new edge style 2, in=30, out=-90, looseness=1.25] (163.center) to (183);
		\draw [style=new edge style 2, in=105, out=-75] (173.center) to (183);
		\draw [style=new edge style 2, in=135, out=-60] (180.center) to (183);
		\draw [in=135, out=-90, looseness=0.75] (165) to (181);
		\draw [in=-90, out=15] (181) to (175);
		\draw [in=90, out=-90, looseness=1.25] (172.center) to (175);
		\draw [in=150, out=-90] (181) to (182);
		\draw [in=-90, out=15, looseness=1.25] (182) to (176);
		\draw [in=270, out=90] (176) to (179.center);
		\draw [style=new edge style 5, in=135, out=-90, looseness=0.75] (174.center) to (184);
		\draw [style=new edge style 5, in=285, out=60] (184) to (164.center);
		\draw [style=new edge style 5, in=90, out=-90] (169.center) to (184);
		\draw [style=new edge style 5, in=-180, out=-90, looseness=1.25] (184) to (154);
		\draw (154) to (185.center);
		\draw (186) to (187);
		\draw [in=270, out=0, looseness=1.25] (188) to (187);
		\draw [style=new edge style 2, in=90, out=-90] (183) to (186);
		\draw [in=165, out=-75] (182) to (188);
		\draw (188) to (189.center);
		\draw [in=90, out=-90] (190.center) to (162.center);
		\draw [in=105, out=-90] (194.center) to (165);
		\draw [in=90, out=-90] (191.center) to (192.center);
		\draw [in=30, out=-90, looseness=0.75] (161) to (165);
		\draw [in=90, out=-90] (196.center) to (166.center);
		\draw [in=-90, out=90, looseness=0.50] (177.center) to (195.center);
	\end{pgfonlayer}
\end{tikzpicture}\]
\[\input{tagfigs/catproof4c.tikz}\]
\[\tikzfigscale{0.94}{tagfigs/catproof4d}\]
\end{proof}

\section{First-Last Dependence is preserved by composition}
\begin{prop}
First-Last Depending tagged lenses define a symmetric monoidal sub-category $\mathbf{fldepTL}[\mathcal{C}]$ of $\cat{TL}(\mathcal{C})$
\end{prop}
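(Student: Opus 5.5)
The plan is to check the three closure properties separately. First, the identity tagged lens is first-last-depending. Second, the parallel composite of two first-last-depending tagged lenses is first-last-depending. Third, and this is the substantial case, the sequential composite of two first-last-depending tagged lenses is first-last-depending. Since $\cat{TL}(\mathcal{C})$ is already known to be a symmetric monoidal category, these three facts give a symmetric monoidal subcategory. The symmetries are identity-like tagged lenses whose tag only projects onto the system, so they are covered by the same check as the identity.

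The identity tag is $T(v,v',v'')=v$. Both sides of $T(T(s,v,v'),v',v'')=T(s,v,v'')$ then reduce to $s$, so it is trivially first-last-depending. For the parallel composite, the tag is $T_1\times T_2$ acting componentwise, after the view wires are split by the cartesian structure. The first-last law for the composite is therefore the product of the first-last laws for $T_1$ and $T_2$. Graphically, this is two disjoint copies of the defining diagram placed side by side.

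For sequential composition I will first rewrite the composed tag from the main-text definition algebraically. It has the form $T(s,v,v')=\Lambda_{v,v'}\big(T_2(s,x,y)\big)$, where the pieces are as follows.
\begin{itemize}
\item $x$ is the past value of the intermediate system, computed from $s$ via $g_2$.
\item $y$ is the value of the intermediate system after the update $P_1$, built from $\putt{}_1$ and $T_1$ applied to $g_2(s)$ with inputs $v,v'$.
\item $\Lambda_{v,v'}$ is the lift of the inner tag $T_1$ to the global system through $(\putt{}_2,g_2)$.
\end{itemize}

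I will then evaluate $T(T(s,v,v'),v',v'')$ in four steps.
\begin{enumerate}
\item Use the fact, proved right after the definition of tagged lenses, that tags do not affect gets, so $g_2(T_2(\cdot))=g_2(\cdot)$. Lifted tags likewise do not change $g_2$ beyond what $\putt{}_2$ prescribes, because $(\putt{}_2,g_2)$ is vwb and so satisfies \textbf{putget}. Together these show that the intermediate values read by the second application of $T$ are exactly the first application's values propagated forward.
\item Commute the outer tag $T_2$ past $\putt{}_2$ and past the lift $\Lambda$, using the tagged-lens commutation axiom $\putt{}_2(T_2(s,\cdot,\cdot),\cdot)=T_2(\putt{}_2(s,\cdot),\cdot,\cdot)$.
\item Bring the two $T_2$'s adjacent and the two lifted $T_1$'s adjacent. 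Merge the $T_2$'s by first-last dependence of $T_2$; the intermediate value plays the role of the middle argument, so it disappears. Merge the two $T_1$'s inside the lifts by first-last dependence of $T_1$, using \textbf{putput} for $\putt{}_2$ to collapse the two lifts into one.
\item Read off the result as $T(s,v,v'')$.
\end{enumerate}

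The main obstacle is step (1), together with its consequence for step (3). I need to check that the "new intermediate" produced by the first application coincides with the "old intermediate" consumed by the second, so that the middle arguments chain exactly as first-last dependence of $T_2$ requires. I expect this to follow from \textbf{putget} for the vwb lenses $(\putt{}_i,g_i)$, plus the get-invariance of tags, but it may instead need the \textbf{getput} and \textbf{putput} laws of $(\putt{}_1,g_1)$ to simplify $P_1$ applied twice. If the direct algebraic chaining is unclear, I will fall back on the graphical calculus: copy-dot naturality (every morphism is a comonoid homomorphism) slides the $g_2$ readings through the tags. I will then mirror the structure of the change-depending proof in the main text, replacing its final "delete both view wires" step with the first-last rewrite.
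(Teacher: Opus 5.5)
Your plan follows the paper's argument. The appendix treats only the sequential case, by a direct graphical rewrite of $T(T(s,v,v'),v',v'')$ into $T(s,v,v'')$ with the middle view discarded, and your four steps are the algebraic form of that calculation. Your checks for identities, tensors and symmetries are routine and correct; the paper leaves them implicit.

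There is one inaccuracy, and it sits exactly at the step you flag as the main obstacle. Read off the main-text diagram, the composite tag is
\[
T(s,v,v')=\putt{}_2\big(T_2(s,a,b),\,T_1(g_2 s,v,v')\big),\qquad a=\putt{}_1(g_2 s,v),\quad b=\putt{}_1\big(T_1(g_2 s,v,v'),v'\big).
\]
So the past-view argument of $T_2$ is $\putt{}_1(g_2 s,v)$, not $g_2 s$. Note also that $b$ is not $P_1(g_2 s,v')$ unless $v=g_1 g_2 s$. The $T_1$ in the lift reads $g_2(T_2(s,a,b))$, which is $g_2 s$ by get-invariance.

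With the correct $a$, the chaining is immediate. Put $u=T(s,v,v')$. Then \textbf{putget} for $(\putt{}_2,g_2)$ gives $g_2 u=T_1(g_2 s,v,v')$. Hence the past argument used by the second application is $\putt{}_1(g_2 u,v')=b$, which is exactly the future argument of the first $T_2$. The rest of the computation is:
\begin{enumerate}
\item commute $T_2$ past $\putt{}_2$;
\item merge $T_2(T_2(s,a,b),b,\cdot)=T_2(s,a,\cdot)$;
\item rewrite $T_1(T_1(g_2 s,v,v'),v',v'')=T_1(g_2 s,v,v'')$, both in the lift and inside the new future argument $\putt{}_1(\cdot,v'')$;
\item collapse the two lifts by \textbf{putput} for $(\putt{}_2,g_2)$.
\end{enumerate}
This returns $T(s,v,v'')$, and no law of $(\putt{}_1,g_1)$ is needed.

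If you instead use $g_2 s$ as the past argument, as your description suggests, the chaining would require $T_1(g_2 s,v,v')=\putt{}_1(T_1(g_2 s,v,v'),v')$. By \textbf{putget}, \textbf{getput} and get-invariance this holds only when $g_1 g_2 s=v'$. Your fallback to \textbf{getput} or \textbf{putput} of the first lens cannot repair that.
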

\begin{proof}
\[\input{tagfigs/flcompose_1.tikz}\]
\[\tikzfigscale{0.88}{tagfigs/flcompose_2}\]
\[\tikzfigscale{0.89}{tagfigs/flcompose_3}\]
\[\tikzfigscale{0.87}{tagfigs/flcompose_4}\]
\[\tikzfigscale{0.84}{tagfigs/flcompose_5}\]
\[\tikzfigscale{0.95}{tagfigs/flcompose_6}\]
\[\begin{tikzpicture}[scale=0.5, {every node/.style}={scale=0.5}]
	\begin{pgfonlayer}{nodelayer}
		\node [style=none] (1565) at (169.75, -0.25) {$=$};
		\node [style=WIDEmorph] (1566) at (175.5, 5.75) {$T_1$};
		\node [style=none] (1567) at (175, 5.5) {};
		\node [style=WIDEmorph] (1568) at (174.25, 7) {$\putt{}_2$};
		\node [style=none] (1569) at (173.75, 6.75) {};
		\node [style=none] (1570) at (174.75, 7) {};
		\node [style=none] (1571) at (173.75, 7.25) {};
		\node [style=none] (1572) at (173.75, 8) {};
		\node [style=none] (1573) at (175, 6) {};
		\node [style=none] (1574) at (176, 5.75) {};
		\node [style=WIDEmorph] (1575) at (173, 2.5) {$T_2$};
		\node [style=none] (1576) at (173.25, 2.5) {};
		\node [style=none] (1577) at (174.25, -1) {};
		\node [style=none] (1578) at (173.5, 2.5) {};
		\node [style=WIDEmorph] (1579) at (174.75, -1) {$\putt{}_1$};
		\node [style=none] (1580) at (174.25, -1.25) {};
		\node [style=none] (1581) at (175.25, -1) {};
		\node [style=none] (1582) at (174.25, -3) {};
		\node [style=WIDEmorph] (1583) at (174.75, -3) {$T_1$};
		\node [style=none] (1584) at (175.25, -3) {};
		\node [style={white_dot}] (1585) at (178.25, -5.5) {};
		\node [style=none] (1586) at (178.25, -9.75) {};
		\node [style=morph] (1589) at (173.75, -8.25) {$g_2$};
		\node [style=none] (1590) at (173.75, -7) {};
		\node [style=WIDEmorph] (1591) at (174.25, -7) {$\putt{}_1$};
		\node [style=none] (1592) at (173.75, -7.25) {};
		\node [style=none] (1593) at (174.75, -7) {};
		\node [style={white_dot}] (1594) at (172.5, -7.5) {};
		\node [style={white_dot}] (1595) at (173, -9) {};
		\node [style={white_dot}] (1596) at (175.5, -8.25) {};
		\node [style=none] (1597) at (173, -10) {};
		\node [style=none] (1598) at (175.5, -9.75) {};
		\node [style={white_dot}] (1599) at (177, -7.5) {};
		\node [style=none] (1600) at (177, -9.75) {};
		\node [style=none] (1601) at (175, 5.5) {};
		\node [style=none] (1602) at (172.5, -0.25) {};
		\node [style=none] (1603) at (175.75, 5.75) {};
		\node [style=none] (1604) at (174.25, -3.25) {};
		\node [style=none] (1605) at (175, -3) {};
		\node [style=none] (1606) at (171.5, -0.5) {};
		\node [style=none] (1607) at (173.25, 2.5) {};
		\node [style=none] (1608) at (175, 5.5) {};
		\node [style=morph] (1609) at (175, 4.75) {$g_2$};
		\node [style={white_dot}] (1610) at (173.5, 4) {};
		\node [style=none] (1611) at (172.5, 2.75) {};
		\node [style=none] (1612) at (172.5, 2.25) {};
		\node [style=WIDEmorph] (1613) at (172, -0.25) {$\putt{}_2$};
		\node [style=none] (1614) at (172.5, -0.25) {};
		\node [style=none] (1615) at (171.5, 0) {};
		\node [style=morph] (1617) at (172.5, -2.25) {$g_2$};
		\node [style=morph] (1618) at (173, -4.75) {$g_2$};
		\node [style={white_dot}] (1619) at (171.5, -3.5) {};
		\node [style=none] (1620) at (179.25, -0.25) {$=$};
		\node [style=WIDEmorph] (1621) at (184.25, 6) {$T_1$};
		\node [style=none] (1622) at (183.75, 5.75) {};
		\node [style=WIDEmorph] (1623) at (183, 7.25) {$\putt{}_2$};
		\node [style=none] (1624) at (182.5, 7) {};
		\node [style=none] (1625) at (183.5, 7.25) {};
		\node [style=none] (1626) at (182.5, 7.5) {};
		\node [style=none] (1627) at (182.5, 8.25) {};
		\node [style=none] (1628) at (183.75, 6.25) {};
		\node [style=none] (1629) at (184.75, 6) {};
		\node [style=WIDEmorph] (1630) at (181.75, 2.75) {$T_2$};
		\node [style=none] (1631) at (182, 2.75) {};
		\node [style=none] (1632) at (183, -0.75) {};
		\node [style=none] (1633) at (182.25, 2.75) {};
		\node [style=WIDEmorph] (1634) at (183.5, -0.75) {$\putt{}_1$};
		\node [style=none] (1635) at (183, -1) {};
		\node [style=none] (1636) at (184, -0.75) {};
		\node [style=none] (1637) at (183, -2.75) {};
		\node [style=WIDEmorph] (1638) at (183.5, -2.75) {$T_1$};
		\node [style=none] (1639) at (184, -2.75) {};
		\node [style={white_dot}] (1640) at (186.25, -5.5) {};
		\node [style=none] (1641) at (187, -9.5) {};
		\node [style=morph] (1642) at (182.5, -8) {$g_2$};
		\node [style=none] (1643) at (182.5, -6.75) {};
		\node [style=WIDEmorph] (1644) at (183, -6.75) {$\putt{}_1$};
		\node [style=none] (1645) at (182.5, -7) {};
		\node [style=none] (1646) at (183.5, -6.75) {};
		\node [style={white_dot}] (1647) at (181.25, -7.25) {};
		\node [style={white_dot}] (1648) at (181.75, -8.75) {};
		\node [style={white_dot}] (1649) at (184.25, -8) {};
		\node [style=none] (1650) at (181.75, -9.75) {};
		\node [style=none] (1651) at (184.25, -9.5) {};
		\node [style={white_dot}] (1652) at (185.75, -7.25) {};
		\node [style=none] (1653) at (185.75, -9.5) {};
		\node [style=none] (1654) at (183.75, 5.75) {};
		\node [style=none] (1656) at (184.5, 6) {};
		\node [style=none] (1657) at (183, -3) {};
		\node [style=none] (1658) at (183.75, -2.75) {};
		\node [style=none] (1660) at (182, 2.75) {};
		\node [style=none] (1661) at (183.75, 5.75) {};
		\node [style=morph] (1662) at (183.75, 5) {$g_2$};
		\node [style={white_dot}] (1663) at (182.25, 4.25) {};
		\node [style=none] (1664) at (181.25, 3) {};
		\node [style=none] (1665) at (181.25, 2.5) {};
		\node [style=morph] (1670) at (181.75, -4.5) {$g_2$};
		\node [style=none] (1671) at (181.25, 2.5) {};
		\node [style=none] (1685) at (188.75, -2) {};
		\node [style=none] (1688) at (189, 0.25) {};
		\node [style=none] (1689) at (189.75, 0.5) {};
		\node [style=none] (1690) at (189.25, -2) {};
		\node [style=WIDEmorph] (1691) at (189.5, 0.5) {$T$};
		\node [style=none] (1692) at (189, 0.75) {};
		\node [style=none] (1693) at (190, 0.5) {};
		\node [style=none] (1694) at (190.75, -2) {};
		\node [style=none] (1695) at (189, 1.5) {};
		\node [style={white_dot}] (1697) at (190, -1.25) {};
		\node [style=none] (1698) at (190, -2) {};
		\node [style=none] (1699) at (187.5, -0.25) {$=$};
	\end{pgfonlayer}
	\begin{pgfonlayer}{edgelayer}
		\draw (1571.center) to (1572.center);
		\draw [in=90, out=-90, looseness=1.25] (1570.center) to (1573.center);
		\draw [in=90, out=-90] (1578.center) to (1577.center);
		\draw [in=90, out=-90, looseness=1.25] (1580.center) to (1582.center);
		\draw [style=new edge style 5, in=90, out=-90] (1585) to (1586.center);
		\draw [style=new edge style 5, in=135, out=-90, looseness=0.75] (1584.center) to (1585);
		\draw [style=new edge style 5, in=285, out=60, looseness=0.25] (1585) to (1574.center);
		\draw [style=new edge style 5, in=90, out=-90] (1581.center) to (1585);
		\draw [style=new edge style 2, in=135, out=-60] (1593.center) to (1596);
		\draw [style=new edge style 2] (1596) to (1598.center);
		\draw [in=150, out=-90] (1594) to (1595);
		\draw [in=-90, out=15, looseness=1.25] (1595) to (1589);
		\draw [in=270, out=90] (1589) to (1592.center);
		\draw (1595) to (1597.center);
		\draw [style=new edge style 6] (1599) to (1600.center);
		\draw [in=90, out=-90, looseness=0.25] (1607.center) to (1590.center);
		\draw [style=new edge style 2, in=30, out=-75, looseness=0.25] (1605.center) to (1596);
		\draw [style=new edge style 2, in=30, out=-90, looseness=0.25] (1603.center) to (1596);
		\draw [in=90, out=-90] (1608.center) to (1609);
		\draw [in=90, out=-90] (1610) to (1611.center);
		\draw [in=-90, out=0] (1610) to (1609);
		\draw [in=120, out=-90, looseness=1.25] (1569.center) to (1610);
		\draw [in=-90, out=90] (1615.center) to (1612.center);
		\draw [in=-90, out=180, looseness=0.75] (1594) to (1619);
		\draw (1619) to (1606.center);
		\draw [in=-90, out=0, looseness=1.50] (1619) to (1617);
		\draw (1617) to (1614.center);
		\draw [in=-90, out=90] (1618) to (1604.center);
		\draw [in=60, out=-90] (1618) to (1594);
		\draw (1626.center) to (1627.center);
		\draw [in=90, out=-90, looseness=1.25] (1625.center) to (1628.center);
		\draw [in=90, out=-90] (1633.center) to (1632.center);
		\draw [in=90, out=-90, looseness=1.25] (1635.center) to (1637.center);
		\draw [style=new edge style 5, in=90, out=-90] (1640) to (1641.center);
		\draw [style=new edge style 5, in=135, out=-90, looseness=0.75] (1639.center) to (1640);
		\draw [style=new edge style 5, in=285, out=60, looseness=0.25] (1640) to (1629.center);
		\draw [style=new edge style 5, in=90, out=-90] (1636.center) to (1640);
		\draw [style=new edge style 2, in=135, out=-60] (1646.center) to (1649);
		\draw [style=new edge style 2] (1649) to (1651.center);
		\draw [in=150, out=-90] (1647) to (1648);
		\draw [in=-90, out=15, looseness=1.25] (1648) to (1642);
		\draw [in=270, out=90] (1642) to (1645.center);
		\draw (1648) to (1650.center);
		\draw [style=new edge style 6] (1652) to (1653.center);
		\draw [in=90, out=-90, looseness=0.25] (1660.center) to (1643.center);
		\draw [style=new edge style 2, in=30, out=-75, looseness=0.25] (1658.center) to (1649);
		\draw [style=new edge style 2, in=30, out=-90, looseness=0.25] (1656.center) to (1649);
		\draw [in=90, out=-90] (1661.center) to (1662);
		\draw [in=90, out=-90] (1663) to (1664.center);
		\draw [in=-90, out=0] (1663) to (1662);
		\draw [in=120, out=-90, looseness=1.25] (1624.center) to (1663);
		\draw [in=-90, out=120, looseness=0.50] (1647) to (1671.center);
		\draw [in=-90, out=90] (1670) to (1657.center);
		\draw [in=60, out=-90] (1670) to (1647);
		\draw [in=270, out=90] (1685.center) to (1688.center);
		\draw [in=90, out=-90] (1689.center) to (1690.center);
		\draw [in=90, out=-90] (1693.center) to (1694.center);
		\draw (1692.center) to (1695.center);
		\draw [style=new edge style 6] (1698.center) to (1697);
	\end{pgfonlayer}
\end{tikzpicture}\]
\end{proof}

\section{Change-depending implies getput}
\begin{prop}[GetPut]
The restriction of $\mathcal{L}$ to $\mathbf{cdepTL}[\mathcal{C}]$ factors through $\mathbf{wbL}[\mathcal{C}]$
\end{prop}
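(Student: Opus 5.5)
The claim that the restriction of $\mathcal{L}$ to $\mathbf{cdepTL}[\mathcal{C}]$ factors through $\mathbf{wbL}[\mathcal{C}]$ splits into two parts. First, every morphism of $\mathbf{cdepTL}[\mathcal{C}]$ is sent to a well-behaved lens. Second, the resulting assignment is still a strict symmetric monoidal functor.

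The second part needs no new work. We already know that $\mathbf{cdepTL}[\mathcal{C}]$ is a symmetric monoidal subcategory of $\cat{TL}(\mathcal{C})$, and that $\mathcal{L}$ is strict monoidal. The category $\mathbf{wbL}[\mathcal{C}]$ is a symmetric monoidal subcategory of $\mathbf{pgL}[\mathcal{C}]$ with the same objects. So it suffices to check the first part on individual morphisms: the corestriction of $\mathcal{L}$ then inherits functoriality and strict monoidality from $\mathcal{L}$ itself.

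For a single change-depending tagged lens $(\putt{},g,T)$, the induced lens $L((\putt{},g,T))$ already satisfies \textbf{putget} by the earlier proposition. It remains to verify \textbf{getput}, and we do this by a direct calculation from the definition of the induced put, $P(s,v)=\putt{}(T(s,g(s),v),v)$. Substituting $v=g(s)$ gives
\[
P(s,g(s)) = \putt{}\big(T(s,g(s),g(s)),g(s)\big).
\]
The change-dependence law $T(s,v,v)=s$ rewrites this as $\putt{}(s,g(s))$. The \textbf{getput} law for the underlying vwb-lens $(\putt{},g)$ then reduces it to $s$.

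Graphically the same argument runs as follows. Plug the copied $g$ output into the view input of $P$, and use coassociativity of the copy map so that both tag inputs and the $\putt{}$ input are fed from a single copy of $g(s)$. Then merge the two tag-input wires into one copy node, which turns the tag into the change-depending pattern and lets it be erased. This leaves the diagram for $\putt{}(s,g(s))$, which collapses to the identity by vwb \textbf{getput}.

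The only step requiring care is the copy-rearrangement in the graphical version. The single copy of $g(s)$ in the definition of $P$ must be regrouped, using coassociativity and cocommutativity, into the exact shape appearing on the left of the change-depending law. Algebraically this regrouping is invisible, so I would present the algebraic computation as the core of the proof and give the diagrammatic version as an illustration.
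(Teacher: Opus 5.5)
Your proof is correct. Your reduction to checking \textbf{getput} on each induced lens is exactly the paper's opening step, but the core calculation takes a different route.

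After substituting $v=g(s)$, the paper first uses the defining tag condition $\putt{}(T(s,v_1,v_2),v_3)=T(\putt{}(s,v_3),v_1,v_2)$ to slide $T$ above $\putt{}$, giving $T(\putt{}(s,g(s)),g(s),g(s))$. It then collapses $\putt{}(s,g(s))$ to $s$ by vwb \textbf{getput} inside the tag's system input. Only at the end does it apply change-dependence and discard the leftover $g(s)$.

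You instead apply change-dependence directly to $T(s,g(s),g(s))$ inside the put, and then use vwb \textbf{getput}. So the commutation condition is never needed. Your argument shows that $L((\putt{},g,T))$ satisfies \textbf{getput} whenever $(\putt{},g)$ does and $T(s,v,v)=s$. That is shorter and uses strictly fewer hypotheses; the paper's ordering buys nothing beyond fitting its diagrammatic rewriting style.

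One correction to your graphical sketch. In the diagram of $P(s,g(s))$, the prior-view input of $T$ is $g$ applied to one copy of $s$, while the updated-view input is a copy of $g$ applied to another copy of $s$. Bringing both onto a single copy node therefore requires $g$ to be a comonoid homomorphism, $c_V\circ g=(g\times g)\circ c_S$, not merely coassociativity and cocommutativity of copying; the paper uses exactly this step. In algebraic notation this is built in, so your main computation is unaffected.
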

\begin{proof}
All that is required is to demonstrate that the lens induced by any tagged lens with a change-depending tag is a well-behaved lens.
\[\begin{tikzpicture}[scale=0.7, {every node/.style}={scale=0.7}]
	\begin{pgfonlayer}{nodelayer}
		\node [style=WIDEmorph] (9) at (-2, 6.25) {$P$};
		\node [style=none] (10) at (-2.5, 6.5) {};
		\node [style=none] (11) at (-2.5, 6) {};
		\node [style=none] (13) at (-1.5, 6.25) {};
		\node [style=none] (16) at (-2.5, 7.25) {};
		\node [style=none] (17) at (-0.5, 6) {$=$};
		\node [style=morph] (59) at (-1.5, 5.5) {$g$};
		\node [style={white_dot}] (60) at (-2, 4.5) {};
		\node [style=none] (61) at (-2, 3.5) {};
		\node [style=WIDEmorph] (218) at (0.75, 7.25) {$T$};
		\node [style=none] (219) at (1, 7.25) {};
		\node [style=none] (220) at (1.25, 7.25) {};
		\node [style=morph] (221) at (1, 6) {$g$};
		\node [style=WIDEmorph] (222) at (1.5, 8.5) {$\putt{}$};
		\node [style=none] (223) at (0.25, 7.5) {};
		\node [style=none] (224) at (0.25, 7) {};
		\node [style={white_dot}] (225) at (2.25, 5.75) {};
		\node [style=none] (226) at (1, 8.25) {};
		\node [style=none] (227) at (2, 8.5) {};
		\node [style={white_dot}] (229) at (0.5, 5.25) {};
		\node [style=none] (231) at (1, 9.5) {};
		\node [style=none] (232) at (1, 8.75) {};
		\node [style=morph] (236) at (2.25, 4.25) {$g$};
		\node [style={white_dot}] (237) at (1.5, 3.25) {};
		\node [style=none] (238) at (1.5, 2.25) {};
		\node [style=none] (239) at (3.25, 6) {$=$};
		\node [style=WIDEmorph] (240) at (5.5, 8.75) {$T$};
		\node [style=none] (241) at (5.75, 8.75) {};
		\node [style=none] (242) at (6, 8.75) {};
		\node [style=morph] (243) at (4.75, 6) {$g$};
		\node [style=WIDEmorph] (244) at (4.25, 7.5) {$\putt{}$};
		\node [style=none] (245) at (3.75, 7.75) {};
		\node [style=none] (246) at (3.75, 7.25) {};
		\node [style={white_dot}] (247) at (6, 5.75) {};
		\node [style=none] (248) at (5, 8.5) {};
		\node [style=none] (249) at (4.75, 7.5) {};
		\node [style={white_dot}] (250) at (4.25, 5.25) {};
		\node [style=none] (251) at (5, 9.75) {};
		\node [style=none] (252) at (5, 9) {};
		\node [style=morph] (253) at (6, 4.25) {$g$};
		\node [style={white_dot}] (254) at (5.25, 3.25) {};
		\node [style=none] (255) at (5.25, 2.25) {};
		\node [style=none] (274) at (7, 6) {$=$};
		\node [style=WIDEmorph] (275) at (9.25, 8.75) {$T$};
		\node [style=none] (276) at (9.5, 8.75) {};
		\node [style=none] (277) at (9.75, 8.75) {};
		\node [style=morph] (278) at (8.5, 6) {$g$};
		\node [style=WIDEmorph] (279) at (8, 7.5) {$\putt{}$};
		\node [style=none] (280) at (7.5, 7.75) {};
		\node [style=none] (281) at (7.5, 7.25) {};
		\node [style={white_dot}] (282) at (9.75, 6.25) {};
		\node [style=none] (283) at (8.75, 8.5) {};
		\node [style=none] (284) at (8.5, 7.5) {};
		\node [style=none] (286) at (8.75, 9.75) {};
		\node [style=none] (287) at (8.75, 9) {};
		\node [style=morph] (288) at (9.75, 5.25) {$g$};
		\node [style=none] (290) at (9, 2.25) {};
		\node [style={white_dot}] (292) at (9, 3.25) {};
		\node [style={white_dot}] (293) at (9, 4.5) {};
		\node [style=none] (294) at (10.75, 6) {$=$};
		\node [style=WIDEmorph] (295) at (13, 8.75) {$T$};
		\node [style=none] (296) at (12.25, 7.5) {};
		\node [style=none] (297) at (13.5, 8.75) {};
		\node [style=morph] (298) at (12.5, 6.5) {$g$};
		\node [style=WIDEmorph] (299) at (11.75, 7.5) {$\putt{}$};
		\node [style=none] (300) at (11.25, 7.75) {};
		\node [style=none] (301) at (11.25, 7.25) {};
		\node [style=none] (303) at (12.5, 8.5) {};
		\node [style=none] (304) at (13.25, 8.75) {};
		\node [style=none] (305) at (12.5, 9.75) {};
		\node [style=none] (306) at (12.5, 9) {};
		\node [style=none] (308) at (12.75, 2.25) {};
		\node [style={white_dot}] (310) at (12.75, 3.25) {};
		\node [style={white_dot}] (311) at (13.5, 5.5) {};
		\node [style=morph] (312) at (13.5, 7.5) {$g$};
		\node [style=morph] (313) at (12, 5.25) {$g$};
		\node [style={white_dot}] (315) at (13.5, 4.25) {};
	\end{pgfonlayer}
	\begin{pgfonlayer}{edgelayer}
		\draw (10.center) to (16.center);
		\draw (60) to (61.center);
		\draw [in=-90, out=0] (60) to (59);
		\draw [in=180, out=-90, looseness=0.75] (11.center) to (60);
		\draw (59) to (13.center);
		\draw [in=90, out=-90] (219.center) to (221);
		\draw [in=270, out=90] (223.center) to (226.center);
		\draw [in=30, out=-90, looseness=0.50] (227.center) to (225);
		\draw [in=150, out=-75, looseness=0.75] (220.center) to (225);
		\draw [in=120, out=-90, looseness=0.75] (224.center) to (229);
		\draw [in=-90, out=0] (229) to (221);
		\draw (231.center) to (232.center);
		\draw (237) to (238.center);
		\draw [in=-90, out=0] (237) to (236);
		\draw [in=150, out=-90] (229) to (237);
		\draw (236) to (225);
		\draw [in=90, out=-90] (241.center) to (243);
		\draw [in=270, out=90] (245.center) to (248.center);
		\draw [in=60, out=-90, looseness=1.25] (249.center) to (247);
		\draw [in=150, out=-75, looseness=0.75] (242.center) to (247);
		\draw [in=120, out=-90, looseness=0.75] (246.center) to (250);
		\draw [in=-90, out=0] (250) to (243);
		\draw (251.center) to (252.center);
		\draw (254) to (255.center);
		\draw [in=-90, out=0] (254) to (253);
		\draw [in=150, out=-90] (250) to (254);
		\draw (253) to (247);
		\draw [in=90, out=-90] (276.center) to (278);
		\draw [in=270, out=90] (280.center) to (283.center);
		\draw [in=60, out=-90, looseness=1.25] (284.center) to (282);
		\draw [in=150, out=-75, looseness=0.75] (277.center) to (282);
		\draw (286.center) to (287.center);
		\draw (288) to (282);
		\draw [in=-90, out=135] (293) to (278);
		\draw (293) to (292);
		\draw (292) to (290.center);
		\draw [in=90, out=-90] (296.center) to (298);
		\draw [in=270, out=90] (300.center) to (303.center);
		\draw (305.center) to (306.center);
		\draw [in=-90, out=135, looseness=1.50] (311) to (298);
		\draw (310) to (308.center);
		\draw (311) to (312);
		\draw [in=-90, out=165, looseness=1.50] (315) to (313);
		\draw [in=-90, out=105, looseness=1.75] (313) to (304.center);
		\draw (312) to (297.center);
		\draw [in=-90, out=180, looseness=0.75] (292) to (281.center);
		\draw [in=180, out=-90, looseness=0.75] (301.center) to (310);
		\draw [in=30, out=-90, looseness=1.25] (288) to (293);
		\draw (311) to (315);
		\draw [in=0, out=-90, looseness=1.25] (315) to (310);
	\end{pgfonlayer}
\end{tikzpicture}\]
\[\begin{tikzpicture}[scale=0.7, {every node/.style}={scale=0.7}]
	\begin{pgfonlayer}{nodelayer}
		\node [style=WIDEmorph] (178) at (0.75, 1.75) {$T$};
		\node [style=none] (179) at (1, 1.75) {};
		\node [style=none] (180) at (1.25, 1.75) {};
		\node [style=none] (184) at (0.25, 1.5) {};
		\node [style={white_dot}] (186) at (1.5, 0.25) {};
		\node [style=none] (188) at (0.25, 2.5) {};
		\node [style=none] (189) at (0.25, 1.75) {};
		\node [style=morph] (191) at (1.5, -0.75) {$g$};
		\node [style={white_dot}] (192) at (1, -2) {};
		\node [style=none] (193) at (1, -2.75) {};
		\node [style=none] (194) at (2.5, 0) {$=$};
		\node [style=none] (198) at (3.25, 2) {};
		\node [style={white_dot}] (199) at (4.5, 0.75) {};
		\node [style=morph] (202) at (4.5, -0.25) {$g$};
		\node [style={white_dot}] (203) at (4, -1.5) {};
		\node [style=none] (204) at (4, -2.25) {};
		\node [style=none] (205) at (5.5, 0) {$=$};
		\node [style=none] (206) at (6.25, 2) {};
		\node [style={white_dot}] (210) at (7.5, -0.25) {};
		\node [style={white_dot}] (211) at (7, -1.5) {};
		\node [style=none] (212) at (7, -2.25) {};
		\node [style=none] (213) at (8.5, 0) {$=$};
		\node [style=none] (214) at (9.5, 1.25) {};
		\node [style=none] (217) at (9.5, -1.25) {};
		\node [style=none] (256) at (-4.5, 0) {$=$};
		\node [style=WIDEmorph] (257) at (-2.25, 2.75) {$T$};
		\node [style=none] (258) at (-2.75, 1) {};
		\node [style=none] (259) at (-2, 2.75) {};
		\node [style=morph] (260) at (-2.75, -0.5) {$g$};
		\node [style=WIDEmorph] (261) at (-3.25, 1) {$\putt{}$};
		\node [style=none] (262) at (-3.75, 1.25) {};
		\node [style=none] (263) at (-3.75, 0.75) {};
		\node [style={white_dot}] (264) at (-1.75, -0.25) {};
		\node [style=none] (265) at (-2.75, 2.5) {};
		\node [style=none] (266) at (-1.75, 2.75) {};
		\node [style={white_dot}] (267) at (-3.25, -1.25) {};
		\node [style=none] (268) at (-2.75, 3.75) {};
		\node [style=none] (269) at (-2.75, 3) {};
		\node [style=morph] (270) at (-1.75, -1.75) {$g$};
		\node [style={white_dot}] (271) at (-2.5, -2.75) {};
		\node [style=none] (272) at (-2.5, -3.75) {};
		\node [style=none] (273) at (-0.5, 0) {$=$};
		\node [style=none] (316) at (-8.25, 0) {$=$};
		\node [style=WIDEmorph] (317) at (-5.75, 2.5) {$T$};
		\node [style=none] (318) at (-6.5, 1.25) {};
		\node [style=none] (319) at (-5.25, 2.5) {};
		\node [style=morph] (320) at (-6.75, -0.75) {$g$};
		\node [style=WIDEmorph] (321) at (-7, 1.25) {$\putt{}$};
		\node [style=none] (322) at (-7.5, 1.5) {};
		\node [style=none] (323) at (-7.5, 1) {};
		\node [style=none] (324) at (-6.25, 2.25) {};
		\node [style=none] (325) at (-5.5, 2.5) {};
		\node [style=none] (326) at (-6.25, 3.5) {};
		\node [style=none] (327) at (-6.25, 2.75) {};
		\node [style=none] (328) at (-6, -4) {};
		\node [style={white_dot}] (329) at (-6, -3) {};
		\node [style={white_dot}] (330) at (-5.25, -0.75) {};
		\node [style=morph] (331) at (-5.25, 1.25) {$g$};
		\node [style=morph] (332) at (-6, 0.25) {$g$};
		\node [style={white_dot}] (333) at (-5.25, -2) {};
	\end{pgfonlayer}
	\begin{pgfonlayer}{edgelayer}
		\draw [in=-90, out=150] (186) to (179.center);
		\draw [in=45, out=-90, looseness=1.25] (180.center) to (186);
		\draw (188.center) to (189.center);
		\draw (186) to (191);
		\draw (192) to (193.center);
		\draw [in=-90, out=0, looseness=0.75] (192) to (191);
		\draw [in=180, out=-90, looseness=0.50] (184.center) to (192);
		\draw (199) to (202);
		\draw (203) to (204.center);
		\draw [in=-90, out=0, looseness=0.75] (203) to (202);
		\draw [in=180, out=-90, looseness=0.50] (198.center) to (203);
		\draw (211) to (212.center);
		\draw [in=-90, out=0, looseness=0.75] (211) to (210);
		\draw [in=180, out=-90, looseness=0.50] (206.center) to (211);
		\draw (217.center) to (214.center);
		\draw [in=90, out=-90] (258.center) to (260);
		\draw [in=270, out=90] (262.center) to (265.center);
		\draw [in=60, out=-90, looseness=1.25] (266.center) to (264);
		\draw [in=150, out=-90, looseness=0.75] (259.center) to (264);
		\draw [in=120, out=-90, looseness=0.75] (263.center) to (267);
		\draw [in=-90, out=0] (267) to (260);
		\draw (268.center) to (269.center);
		\draw (271) to (272.center);
		\draw [in=-90, out=0] (271) to (270);
		\draw [in=150, out=-90] (267) to (271);
		\draw (270) to (264);
		\draw [in=90, out=-90] (318.center) to (320);
		\draw [in=270, out=90] (322.center) to (324.center);
		\draw (326.center) to (327.center);
		\draw (329) to (328.center);
		\draw (330) to (331);
		\draw [in=-90, out=90, looseness=1.75] (332) to (325.center);
		\draw (331) to (319.center);
		\draw [in=180, out=-90, looseness=0.75] (323.center) to (329);
		\draw (330) to (333);
		\draw [in=0, out=-90, looseness=1.25] (333) to (329);
		\draw [in=120, out=-90] (320) to (333);
		\draw [in=-90, out=150] (330) to (332);
	\end{pgfonlayer}
\end{tikzpicture}.\]
\end{proof}

\end{document}